\title{Two-fold quasi-alternating links,\\ Khovanov homology
and instanton homology}
\author{Christopher Scaduto {\;}\& Matthew Stoffregen}
\date{}
\newcommand{\R}{\mathbb{R}}
\newcommand{\Z}{\mathbb{Z}}
\newcommand{\Q}{\mathbb{Q}}
\newcommand{\kh}{\text{Kh}}
\newcommand{\F}{\mathbb{F}}
\newtheorem{prop}{Proposition}
\newtheorem{theorem}{Theorem}
\newtheorem{defn}{Definition}
\newtheorem{conjecture}{Conjecture}
\newtheorem{corollary}{Corollary}
\newtheorem{remark}{Remark}
\newtheorem{example}{Example}
\begin{document}

\maketitle 

\begin{abstract}
We introduce a class of links strictly containing quasi-alternating links for which mod 2 reduced Khovanov homology is always thin. We compute the framed instanton homology for double branched covers of such links. Aligning certain dotted markings on a link with bundle data over the branched cover, we also provide many computations of framed instanton homology in the presence of a non-trivial real 3-plane bundle. We discuss evidence for a spectral sequence from the twisted Khovanov homology of a link with mod 2 coefficients to the framed instanton homology of the double branched cover. We also discuss the relevant mod 4 gradings.
\end{abstract}

\section{Introduction}

Quasi-alternating (QA) links are a generalization of alternating links introduced by Ozsv\'{a}th and Szab\'{o} \cite{ozsz}. Manolescu and Ozsv\'{a}th \cite{mo} showed that the reduced Khovanov homology with integer coefficients of a QA link is determined by classical link invariants: it is a free abelian group whose rank is the determinant of the link, and the graded decomposition of this group is determined by the Jones polynomial. Here we introduce {\emph{Two-fold quasi-alternating {\emph{(TQA)}} links}}, a class strictly containing that of QA links. For TQA links, Khovanov homology with mod 2 coefficients is similarly determined by classical invariants.

The authors were led to the class of TQA links through studying instanton homology.
In \cite{scaduto}, the first author constructed a spectral sequence with $E^2$-page the reduced odd Khovanov homology of a link converging to the framed instanton homology of its double branched cover equipped with a trivial $SO(3)$-bundle. In this article our attention turns to non-trivial bundles. We first compute the framed instanton homology for double branched covers of TQA `links with $SO(3)$-bundle,' or {\emph{two-fold marked links}}. We then produce a mod 4 graded spectral sequence from a combinatorially defined homology theory for `dotted diagrams' that converges to the framed instanton homology of the double branched cover, with bundle data determined by the dots.\\

\vspace{.25cm}

\noindent {\textbf{A generalized notion of quasi-alternating.} To state our results, we first introduce the primary objects of interest. Let  $L$ be a link, and let $\omega$ assign $0$ or $1$ to each component of $L$. We think of $\omega$ pictorially as assigning dots to our diagram: one dot on each component that $\omega$ assigns $1$. We require that there are an even number of dots in total. We call $\omega$ {\emph{two-fold marking data for $L$}}. We say that $\omega$ is {\emph{non-trivial}} if it assigns 1 to at least one component of $L$. Note that if $L$ is a knot, it only supports trivial two-fold marking data. We call the pair $(L,\omega)$ a {\emph{two-fold marked link}}. 

The link $L$ has $2^{|L|-1}$ distinct choices for two-fold marking data, where $|L|$ is the number of components of $L$. See Figure \ref{fig:2foldmarkings}. It will be convenient, in the sequel, to allow for more than one dot on each component of $L$, in which case $\omega$ counts dots mod 2. Thus a two-fold marked link is a link with dots, in which one imposes the relation that two dots on the same component cancel. Given a component $K$ of the link $L$ we write $\omega(K)\in\Z_2$ for the value that $\omega$ assigns to $K$.

Let $D$ be a planar diagram representing $L$. By an {\emph{arc}} of $D$ we mean a strand of $D$ that descends to an edge of the 4-valent graph formed from $D$ upon forgetting its crossings. Write $\Gamma$ for the set of all arcs. Let $K$ be a component of $L$, and $\Gamma(K)$ denote the set of arcs in $D$ that form $K$. Choose an assignment $\check{\omega}:\Gamma\to \Z_2$ that is compatible with $\omega$ in the following sense:
\[
	\sum_{\gamma\in \Gamma(K)} \check{\omega}(\gamma) \equiv \omega(K) \mod 2,
\]
for each component $K$.
We call $\check{\omega}$ two-fold marking data for $D$ compatible with $\omega$. This data may be packaged diagrammatically by placing an odd number of dots on any arc $\gamma$ for which $\check{\omega}(\gamma)=1$. We call $(D,\check{\omega})$ a {\emph{two-fold marked diagram}}, and sometimes simply a {\emph{dotted diagram}}, or sometimes even a {\emph{diagram of}} $(L,\omega)$.

Suppose that at a fixed crossing of $D$ we perform a 0-smoothing to obtain a diagram $D_0$. Then we obtain two-fold marking data $\check{\omega}_0$ for $D_0$ by carrying the dots along in the natural way, and by counting dots modulo 2. The two-fold marked diagram $(D_0,\check{\omega}_0)$ then represents a two-fold marked link $(L_0,\omega_0)$. However, it is important to note that the resulting $\omega_0$ depends on $(D,\check{\omega})$, not just $(L,\omega)$ with a local modification of a crossing. See Figure \ref{fig:resolutiondependence}. Similar remarks hold for 1-smoothings.

When $(L,\omega)$ is as above but $\omega$ decorates $L$ with odd number of dots, it is not, by definition, a two-fold marked link. We instead say that $(L,\omega)$ is an {\emph{odd-marked link}}, a notion that will only be auxiliary for us. Recall that a link $L$ is {\emph{split}} if there is a 2-sphere disjoint from $L$ that separates the components of $L$ into two non-empty sets. The following is a generalization of quasi-alternating (QA) links as defined in \cite[Def. 3.1]{ozsz} to our setting of two-fold marked links:\\

\begin{figure}[t]
\centering
\includegraphics[scale=.25]{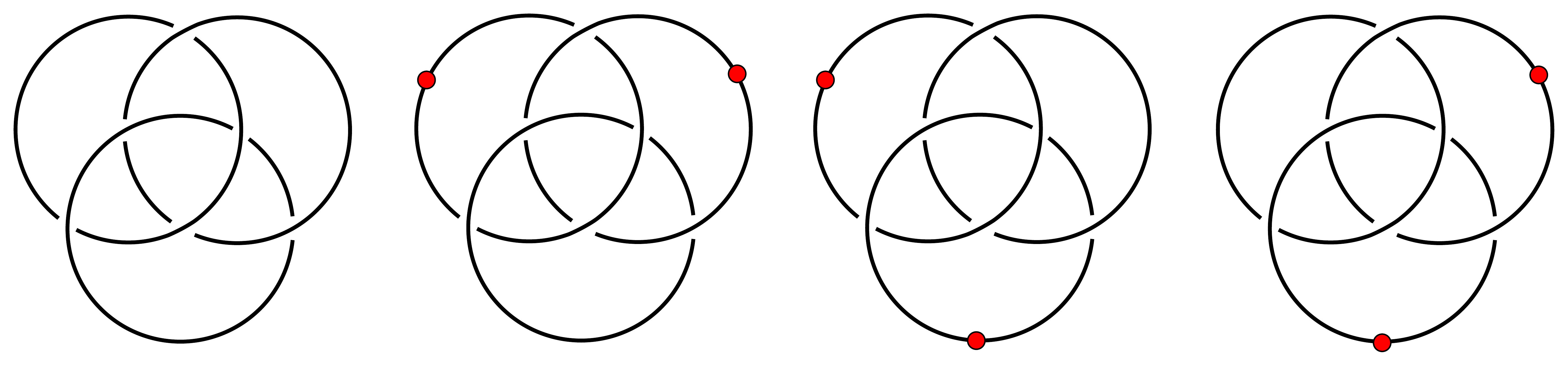}
\caption{The $2^{3-1}=4$ distinct two-fold markings of the 3-component link L6n1.}\label{fig:2foldmarkings}
\end{figure}

\begin{defn}\label{defn:tqa}
	The set of {\emph{two-fold quasi-alternating (TQA) two-fold marked links}}, denoted $\mathcal{Q}_2$, is the smallest set of two-fold marked links satisfying the following:
\begin{itemize}
	\item[(1)] The unknot with its unique trivial two-fold marking data is in $\mathcal{Q}_2$.
	\item[(2)] Any two-fold marked link that splits into two odd-marked links is in $\mathcal{Q}_2$.
	\item[(3)] Suppose $(L,\omega)$ has a diagram $(D,\check{\omega})$ such that the two smoothings $(D_0,\check{\omega}_0)$ and $(D_1,\check{\omega}_1)$ at a crossing represent $(L_0,\omega_0)$ and $(L_1,\omega_1)$, respectively, and that the following hold:
	\begin{itemize}
		\item[$\bullet$] Both smoothings $(L_0,\omega_0)$ and $(L_1,\omega_1)$ are in $\mathcal{Q}_2$.
		\item[$\bullet$] The additivity relation $\det(L)=\det(L_0)+\det(L_1)$ holds.
	\end{itemize}
	Then $(L,\omega)$ is in $\mathcal{Q}_2$.
\end{itemize}
We say that a link $L$ is {\emph{TQA}} if for the trivial two-fold marking data $\omega$ we have $(L,\omega)\in \mathcal{Q}_2$.\\
\end{defn}
\vspace{.25cm}

\noindent We remark that the additivity of determinants automatically holds and need not be checked if one of $(L_0,\omega_0)$ or $(L_1,\omega_1)$ is split into odd-marked links. Note also that if $L$ is QA then $(L,\omega)$ is TQA for any two-fold marking data $\omega$. However, consider $L=$L6n1, the link with the lowest crossing number which is not QA. The three two-fold marked links $(L,\omega)$ with $\omega$ non-trivial, depicted in Figure \ref{fig:2foldmarkings}, are all TQA, as illustrated in Figure \ref{fig:L6n1isinQ2}. On the other hand, L6n1 is not TQA as a link.

\begin{figure}[t]
\centering
\includegraphics[scale=.65]{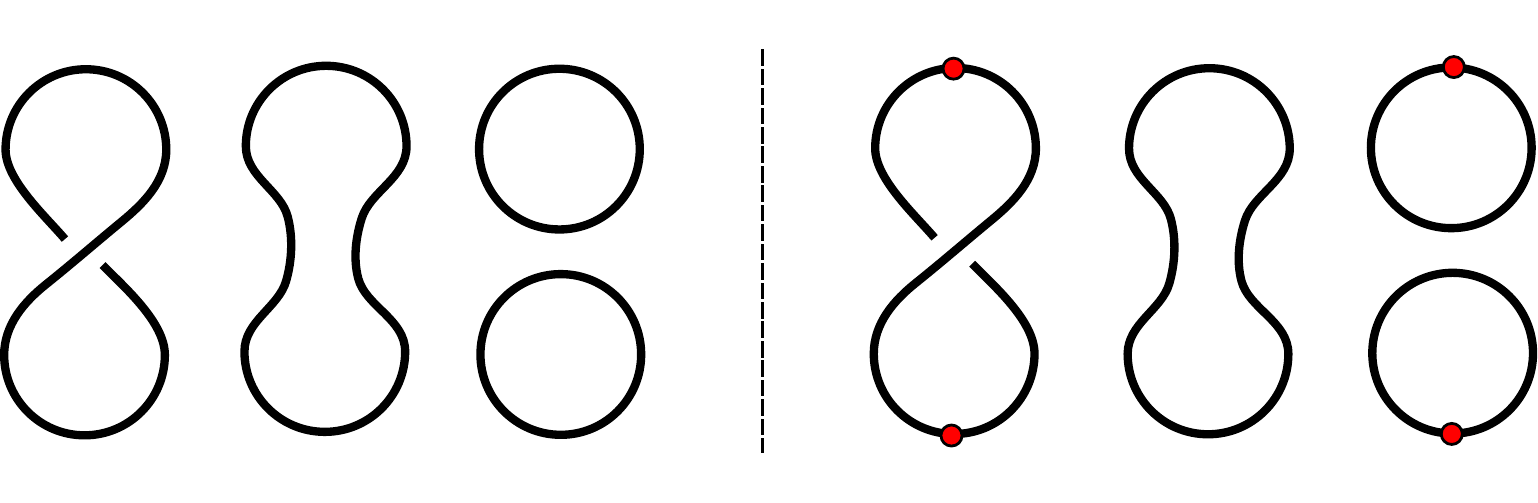}
\caption{In these two figures we perform $0$- and $1$-smoothings on $(L,\omega)$ where $L$ is an unknot and $\omega$ is trivial. On the left, we represent $(L,\omega)$ by the two-fold marked diagram $(D,\check{\omega})$ in which $D$ is an unknot with a twist, and where $\check{\omega}$ is trivial. On the right, we represent $(L,\omega)$ by the two-fold marked diagram $(D,\check{\omega}')$, where $\check{\omega}'$ has a dot on each of the two arcs. The $0$-smoothing in either case is an unknot (which must have trivial marking data), and the $1$-smoothing is on the left an unlink with trivial marking data, and on the right an unlink split into two odd-marked unknots.} \label{fig:resolutiondependence}
\end{figure}

In the next section, we will introduce the $\delta$-graded twisted Khovanov homology $\text{Kh}(L,\omega)$ of a two-fold marked link $(L,\omega)$ with $\Z_2=\Z/2$ coefficients. This is a variant of Roberts' Totally Twisted Khovanov homology \cite{roberts} and the more general construction that appears in Jaeger \cite{jaeger}. It is also related to Baldwin, Levine and Sarkar's Khovanov homology for pointed links \cite{bls}. When $\omega$ is trivial, this invariant is none other than the usual reduced Khovanov homology of $L$, denoted in this article as $\overline{\text{Kh}}(L;\Z_2)$. The following justifies our consideration of TQA links:\\

\begin{theorem}
 \label{prop:tqa}
    If $(L,\omega)$ is TQA, then $\text{{\emph{Kh}}}(L,\omega)$ is supported in $\delta$-grading $0$ and has rank $\det(L)$.\\
\end{theorem}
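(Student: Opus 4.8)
The plan is to induct on the inductive construction of the set $\mathcal{Q}_2$ given in Definition \ref{defn:tqa}, mirroring the Manolescu--Ozsv\'ath argument \cite{mo} for ordinary quasi-alternating links but carried out in the twisted theory $\text{Kh}(L,\omega)$ with $\Z_2$ coefficients. The three generating clauses of the definition produce three cases: the unknot base case, the split-into-odd-marked case, and the crossing-resolution case. Throughout, I track two invariants simultaneously — the $\delta$-grading support and the total rank over $\Z_2$ — and show that each generating operation keeps them controlled, so that the conclusion ``supported in $\delta=0$, rank $\det(L)$'' propagates. For the base case one computes directly that $\text{Kh}$ of the unknot with its trivial marking (which is $\overline{\text{Kh}}(\text{unknot};\Z_2)$) is a single copy of $\Z_2$ in $\delta$-grading $0$, of rank $1=\det(\text{unknot})$.

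For clause (2), the essential input is a structural property of the twisted theory that I would isolate as a preliminary lemma: the twisted Khovanov homology of a link that splits into two odd-marked pieces vanishes. This is consistent with, and indeed forced by, the vanishing of the determinant of a split link; the role of the two odd markings — one on each side of the splitting sphere — is to render the twisted complex acyclic, and verifying this acyclicity directly from the construction of the twisted differential (given in the next section) is the content of the lemma. Note that I do \emph{not} claim that an individual odd-marked link has vanishing homology — a once-dotted unknot has rank one — rather that the two odd markings conspire across the splitting. Granting the lemma, any such $(L,\omega)$ has $\text{Kh}(L,\omega)=0$, which is vacuously supported in $\delta=0$ and has rank $0=\det(L)$.

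The heart of the argument is clause (3), handled via an unoriented skein long exact sequence for twisted Khovanov homology relating $\text{Kh}(L,\omega)$, $\text{Kh}(L_0,\omega_0)$ and $\text{Kh}(L_1,\omega_1)$ at the distinguished crossing, with the dots carried along as prescribed by $\check{\omega}$. By the inductive hypothesis both resolutions are thin, supported in $\delta$-grading $0$, with ranks $\det(L_0)$ and $\det(L_1)$. The key computation is to pin down the $\delta$-grading shifts attached to the three terms of the exact triangle: once these are determined (they depend on the sign of the crossing and on which resolution inherits the orientation), one sees that the map in the triangle relating the two resolutions' homologies carries a nonzero $\delta$-degree. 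Since all of $\text{Kh}(L_0,\omega_0)$ and $\text{Kh}(L_1,\omega_1)$ lives in $\delta=0$, this map must vanish, and the long exact sequence collapses to a short exact sequence
\[ 0 \to \text{Kh}(L_1,\omega_1) \to \text{Kh}(L,\omega) \to \text{Kh}(L_0,\omega_0) \to 0 \]
up to the cone's grading shifts. Over the field $\Z_2$ this splits, and — checking that the shifts deposit both summands back in $\delta=0$ for $L$ — forces $\text{Kh}(L,\omega)$ to be supported in $\delta=0$ with rank $\det(L_0)+\det(L_1)$. The additivity hypothesis $\det(L)=\det(L_0)+\det(L_1)$ then delivers exactly $\det(L)$, completing the induction.

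The main obstacle I anticipate is establishing the twisted skein exact sequence together with its precise grading data. Unlike the untwisted case, the dots must be transported consistently across the two resolutions, and the resolution-dependence of $\omega_0,\omega_1$ illustrated in Figure \ref{fig:resolutiondependence} means one cannot simply quote the classical statement; the mapping-cone description of $\text{Kh}$ must be set up so that the decorated smoothings appear with the correct marking data and grading normalization. Verifying that the relevant map genuinely shifts $\delta$-grading — rather than preserving it, which would obstruct rank additivity — and that the cone's shifts return both summands to $\delta=0$, is the crux; it is this grading bookkeeping, rather than any deep homological input, that I expect to demand the most care.
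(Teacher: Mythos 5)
Your proposal is correct and follows essentially the same route as the paper: structural induction on Definition \ref{defn:tqa}, with the vanishing of $\kh(L,\omega)$ for links split into odd-marked pieces as the one genuinely new lemma (in the paper this follows immediately from the vertical differential $d_v$ being an isomorphism on every summand $C_u$ whose resolution carries an odd-dotted circle), and the twisted skein exact triangle with the Manolescu--Ozsv\'{a}th $\delta$-grading bookkeeping handling the resolution clause exactly as you describe. The only detail worth adding is the degenerate case where a resolution has determinant zero: there the paper notes that \cite[Lemma 2.1]{mo} still holds assuming only $\det(L_v)>0$ and $\det(L_h)=0$, so the exact triangle degenerates to a $\delta$-graded isomorphism $\kh(L,\omega)\cong\kh(L_0,\omega_0)$ and the induction goes through.
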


\noindent In particular, if a link $L$ is TQA, then it is {{\emph{mod 2 Khovanov thin}}. Thus the class of TQA links, which strictly contains that of QA links, helps to further explain the prevalence of mod 2 Khovanov thin links. The proof of this theorem is a straightforward extension of the usual skein exact triangle argument for quasi-alternating links in the framework of our twisted Khovanov homology. The only substantial addition to the proof is an observation that follows readily from the definitions: $\text{Kh}(L,\omega)$ vanishes when $(L,\omega)$ splits into two odd-marked links.

We have the following for low-crossing knots:\\

\begin{itemize}
    \item For prime knots with up to 8 crossings, TQA and QA are equivalent notions.
    \item For prime knots with up to 10 crossings, there are two knots, 9$_{46}$ and 10$_{140}$, which are TQA but not QA. These are the (3,3,-3) and (3,4,-3) pretzel knots, respectively. By the computations of Shumakovitch \cite{shumakovitch}, each has thick odd Khovanov homology, so are not QA.
\end{itemize}

\vspace{.25cm}

\noindent It has been conjectured by Greene \cite{greene} that there exist finitely many QA links of a fixed determinant. This is not true for TQA links. Greene and Watson \cite{greenewatson} constructed an infinite family of thin, hyperbolic, non-QA knots with identical homological invariants. Here, ``thin'' means indistinguishable from a QA knot using the ``homological invariants'' of reduced Khovanov homology, reduced odd Khovanov homology, and Heegaard-Floer knot homology. Their family is a subset of the Kanenobu knots $K_{p,q}$ with $p,q\in \Z$. In Figure \ref{fig:kanenobu} we show that every $K_{p,q}$ is TQA. In Section \ref{sec:more} we will present many TQA non-QA Montesinos knots. We do not have an example of a multi-component link that is TQA but not QA, but have no reason to suspect that they do not exist.\\

\vspace{.55cm}

\noindent{\textbf{Framed instanton homology for TQA branched covers.}} Let $L$ be a link, and let $\Sigma(L)$ be the double cover of $S^3$ branched over $L$. We have the following one-to-one correspondence:

\begin{equation}
 \left\{\begin{array}{c} $SO(3)$-\text{bundles over }\Sigma(L) \\ \text{up to isomorphism} \end{array}\right\}
 \;\; \overset{1:1}\longleftrightarrow \;\;  \left\{\begin{array}{c} \text{Two-fold marking }\\ \text{data for the link } L\end{array}\right\} \label{eq:bundlemarking}
\end{equation}
\vspace{.10cm}

\noindent The correspondence is as follows. Given two-fold marking data $\omega$ for $L$, pair up the dots on $L$ determined by $\omega$ in any fashion. Then consider a collection of embedded arcs in $S^3$ whose interiors are disjoint from $L$, and whose boundaries together form the set of dots determined by $\omega$. Now, lift the arcs to $\Sigma(L)$ to obtain a union of embedded loops. The desired $SO(3)$-bundle $E$ is one for which $w_2(E)$ is Poincar\'{e} dual to this union.

The class of TQA two-fold marked links was discovered by the authors as the natural one for the following result, which gives computations of the $\Z_4$-graded framed instanton homology $I^\#(\Sigma(L),\omega)$, with bundle data determined by $\omega$ as in (\ref{eq:bundlemarking}). (For a quick review of framed instanton homology, see Section \ref{sec:framed}.) Unlike the previous result, the following holds with integer coefficients.\\
\vspace{.15cm}

\begin{theorem} \label{thm:tqa}
	If $(L,\omega)$ is TQA then  $I^\#(\Sigma(L),\omega)$ is free abelian of rank $\det(L)$ and is supported in gradings $\{0,2\}\subset \Z_4$. If $\omega$ is non-trivial, then the ranks in degrees $0$ and $2$ are both
\[
	\frac{1}{2}\det(L).
\]
If $\omega$ is trivial, then the rank in grading $j \in \{0,2\}\subset \Z_4$ is given by
\[
	\frac{1}{2}\left(\det(L) + (-1)^{( j/2 )}2^{|L|-1}\right).
\]
where $|L|$ is the number of components of $L$.\\
\end{theorem}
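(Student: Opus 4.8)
The plan is to induct on the recursive structure of Definition \ref{defn:tqa}, using an unoriented surgery exact triangle for the framed instanton homology of double branched covers that respects the marking data. Smoothing a crossing of $L$ realizes $\Sigma(L)$, $\Sigma(L_0)$, $\Sigma(L_1)$ as a triad of manifolds related by Dehn surgery along a knot, and Kronheimer and Mrowka's surgery exact triangle then furnishes a long exact sequence relating $I^\#(\Sigma(L),\omega)$, $I^\#(\Sigma(L_0),\omega_0)$, $I^\#(\Sigma(L_1),\omega_1)$, with the bundle data propagated through the smoothings by $\check{\omega}$ as in (\ref{eq:bundlemarking}). The first task is to set up this triangle in the marked setting with the correct $\Z_4$-grading shifts on the three maps. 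The induction is then on the height of $(L,\omega)$ in the recursive generation of $\mathcal{Q}_2$, with clauses (1) and (2) of Definition \ref{defn:tqa} furnishing the base cases and clause (3) the inductive step.

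For the base cases, the unknot gives $\Sigma(L)=S^3$ and $I^\#(S^3)=\Z$ supported in grading $0$, matching $\det=1$ and the trivial-$\omega$ formula with $|L|=1$. For a two-fold marked link that splits into two odd-marked links I would argue $I^\#(\Sigma(L),\omega)=0$: here $\Sigma(L)\cong \Sigma(L_1)\,\#\,\Sigma(L_2)\,\#\,(S^1\times S^2)$, and pairing a dot on $L_1$ with a dot on $L_2$ along an arc crossing the splitting sphere forces $w_2(E)$ to evaluate non-trivially on the $S^2$ in the $S^1\times S^2$ summand. Since an $SO(3)$-bundle non-trivial on an embedded sphere supports no flat connection, a connected-sum argument yields vanishing, matching $\det(L)=0$. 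This is the instanton analogue of the observation (used for Theorem \ref{prop:tqa}) that $\text{Kh}(L,\omega)$ vanishes on such splittings.

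For the inductive step, assume $(L_0,\omega_0)$ and $(L_1,\omega_1)$ satisfy the theorem, so each is free abelian and supported in $\{0,2\}\subset\Z_4$. The connecting map of the triangle shifts the $\Z_4$-grading by an odd amount and hence carries even gradings into odd ones; as all three groups live in $\{0,2\}$, this map must vanish, and the triangle degenerates into a short exact sequence
\[
0 \longrightarrow I^\#(\Sigma(L_1),\omega_1) \longrightarrow I^\#(\Sigma(L),\omega) \longrightarrow I^\#(\Sigma(L_0),\omega_0) \longrightarrow 0
\]
(after possibly relabelling the two smoothings). Since the quotient is free, the sequence splits, so $I^\#(\Sigma(L),\omega)$ is free abelian, supported in $\{0,2\}$, with total rank $\det(L_0)+\det(L_1)=\det(L)$ by the additivity hypothesis of Definition \ref{defn:tqa}. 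It then remains to distribute the rank between gradings $0$ and $2$, i.e.\ to compute the signed count $\lambda(L,\omega):=\mathrm{rank}\,I^\#_0-\mathrm{rank}\,I^\#_2$. I would identify $\lambda$ with the $i^j$-weighted Euler characteristic, which by the short exact sequence is additive up to the sign governed by the grading shifts, and which I expect to reproduce a Jones-type evaluation of $(L,\omega)$: for trivial $\omega$ this should equal $|V_L(1)|=2^{|L|-1}$, while for non-trivial $\omega$ the corresponding twisted evaluation vanishes. Equivalently, on the gauge-theoretic side the abelian flat connections all lie in grading $0$ and number $|H_1(\Sigma(L);\Z_2)|=2^{|L|-1}$ for the trivial bundle, but none exist for a non-trivial bundle. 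Either route yields $\lambda=2^{|L|-1}$ for trivial $\omega$ and $\lambda=0$ for non-trivial $\omega$, which together with total rank $\det(L)$ gives the stated formulas.

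The technical heart, and the step I expect to be the main obstacle, is the bundle-twisted surgery exact triangle together with precise control of the three $\Z_4$-grading shifts. I need the shifts arranged so that the connecting map has odd parity (forcing degeneration) and so that the refined count $\lambda$ tracks correctly through the induction; the latter is delicate because $|L|$ changes under smoothing and the signs introduced by the shifts must conspire to reproduce the $2^{|L|-1}$ discrepancy. Pinning down these grading conventions in the marked setting, and establishing the connected-sum vanishing involving the $S^1\times S^2$ summand, is where the real work lies. Once these are in place, the combinatorial induction runs exactly as in the quasi-alternating case.
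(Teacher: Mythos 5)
Your overall skeleton matches the paper's: induction through Definition \ref{defn:tqa} using the bundle-twisted surgery triangle, vanishing of $I^\#$ for links split into odd-marked pieces via a sphere meeting $\omega$ in an odd number of points, degeneration of the triangle into a short exact sequence because the connecting map has odd degree while the other two maps are even, and total rank $\det(L_0)+\det(L_1)=\det(L)$. (Your induction on derivation height is in fact tidier than the paper's induction on determinant, which forces a detour through determinant-zero resolutions.) The genuine gap is exactly where you flag it: the distribution of rank between gradings $0$ and $2$. Neither of your two proposed routes for computing $\lambda=\mathrm{rank}_0-\mathrm{rank}_2$ is viable as stated. There is no localization or Lefschetz-type theorem identifying the $\Z_4$-refined Euler characteristic of $I^\#$ with a count of abelian flat connections, and no reason is offered for the assertion that such connections all sit in grading $0$; pinning down the $\Z_4$-gradings of reducibles is essentially equivalent to the computation you are trying to avoid. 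The ``Jones-type evaluation'' route is likewise circular: $\lambda$ is additive in the short exact sequence only up to a sign $(-1)^{\deg(b)/2}$ determined by the mod $4$ degrees of the two even maps, and deciding whether those degrees are $0$ or $2$ modulo $4$ is the whole problem.

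The paper resolves this by a four-way case analysis on which of $\omega,\omega_0,\omega_1$ are trivial. In each case it fixes absolute $\Z_4$-gradings via the spin structures attached to quasi-orientations of the links, uses the Lisca--Owens result (Proposition \ref{prop:blowup}) that the $2$-handle cobordism to the oriented resolution is spin, and then computes the degree shifts of the maps $a$ and $b$ via Propositions \ref{prop:deg1} and \ref{prop:deg} together with the pairing (\ref{eq:ordiff}) between changes of orientation and the marking data. For instance, when $\omega$ is non-trivial and both $\omega_0,\omega_1$ are trivial, reversing the orientation of one of the two relevant components swaps which smoothing is the oriented resolution and changes the degree by $2\langle o-o',\omega\rangle\equiv 2$, so $\deg(a)\equiv 0$ while $\deg(b)\equiv 2$; this is precisely what makes the $\pm 2^{|L_i|-1}$ terms cancel and yields $\lambda=0$. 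When $\omega$ is trivial and the two strands lie on one component, one instead shows $\deg(c)\equiv d(V_{10})\equiv 1$ from the signature of the $2$-handle cobordism to conclude $\deg(b)\equiv 2$, which accounts for the sign flip $\varepsilon_j(L)=\varepsilon_j(L_0)+\varepsilon_{j+2}(L_1)$. Without carrying out this degree bookkeeping your induction cannot close, so as written the proof is incomplete at its central step.
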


\begin{figure}[t]
\centering
\includegraphics[scale=.25]{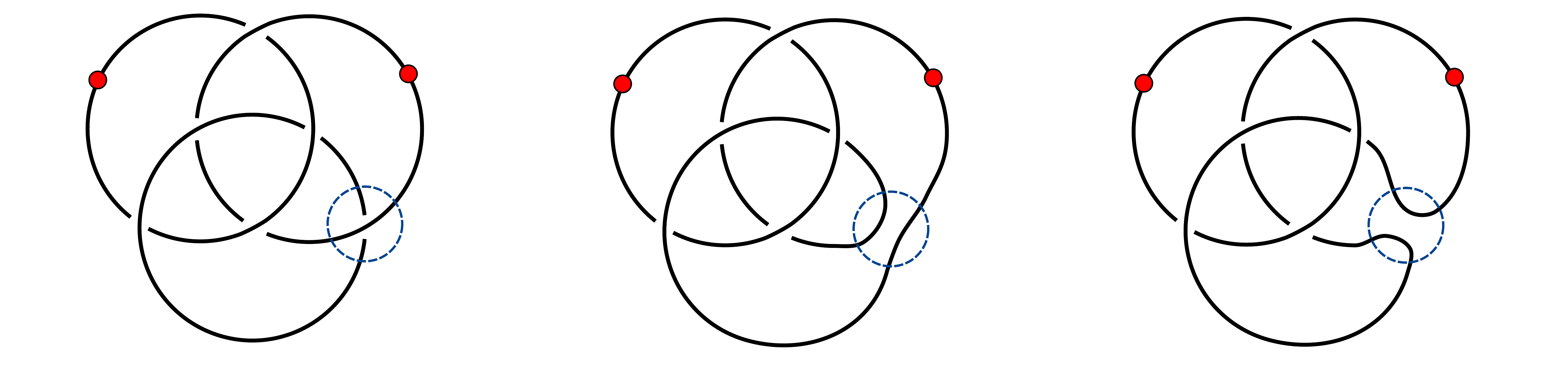}
\caption{Here we illustrate why L6n1 with the depicted non-trivial two-fold marking data (the left-most picture) is TQA. The middle picture is an unlink with non-trivial two-fold marking data, and thus is split into two odd-marked links. The right-most picture is a two-fold marked L4a1, which is alternating and hence TQA for any two-fold marking. Thus our two-fold marked L6n1 is TQA. Similar reasoning shows that L6n1 with the two other non-trivial two-fold markings are TQA.}\label{fig:L6n1isinQ2}
\end{figure}

\noindent The case in which $\omega$ is trivial and $L$ is QA was done in \cite{scaduto}. We note that when $\omega$ is non-trivial, $I^\#(\Sigma(L),\omega)$ is a priori only {\emph{relatively}} $\Z_4$-graded (and absolutely $\Z_2$-graded). To fix the $\Z_4$-grading one can choose an orientation of $L$. However, in the Theorem above it is evident that when $(L,\omega)$ is TQA and $\omega$ is non-trivial all $\Z_4$-gradings that lift the $\Z_2$-grading are equivalent. \\

\vspace{.35cm}

\noindent {\textbf{A spectral sequence for two-fold marked links.}} When $(L,\omega)$ is not TQA, we can still obtain quantitative results about $I^\#(\Sigma(L),\omega)$. We restrict to the case of mod 2 coefficients, and this will be implicit in all of the notation to follow. We first suggest:\\

\begin{conjecture}\label{conj:1}
There is a spectral sequence with $\Z_2$ coefficients with starting page $\text{{\emph{Kh}}}(L,\omega)$ converging to {\emph{$I^\#(\Sigma(L^\dagger),\omega)$}}, where {\emph{$L^\dagger$}} is the mirror of $L$.\\
\end{conjecture}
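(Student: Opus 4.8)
The plan is to build the spectral sequence from the cube of resolutions of a diagram $D$ for $L$, realizing each vertex as a branched double cover equipped with bundle data read off from the dots, and then to assemble the resulting family of unoriented skein exact triangles in framed instanton homology into a single filtered complex. This is exactly the strategy of \cite{scaduto}, which treats the case of trivial $\omega$ (where the relevant combinatorial invariant is reduced odd Khovanov homology); the present conjecture asks to run the same machine with the local-system twisting recorded by $\omega$ and with $\Z_2$ coefficients. The mirror $L^\dagger$ should appear because the direction of the surgery exact triangle, together with the orientation conventions relating $\Sigma(L)$ and $\Sigma(L^\dagger)$, fixes the sense in which the sequence runs from Khovanov homology to instanton homology.

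First I would establish the $\omega$-twisted unoriented skein exact triangle: at a crossing of a two-fold marked diagram, the two resolutions $(L_0,\omega_0)$ and $(L_1,\omega_1)$ are obtained by surgery on a knot in $\Sigma(L)$, so the instanton surgery exact triangle (in the framed setting, for the $SO(3)$-bundles determined by $\omega$ via (\ref{eq:bundlemarking})) should give
\[
	\cdots \to I^\#(\Sigma(L),\omega) \to I^\#(\Sigma(L_0),\omega_0) \to I^\#(\Sigma(L_1),\omega_1) \to \cdots.
\]
The essential bookkeeping is that the dots carry through each resolution, so the bundle data at each corner stays consistent. Next I would organize these triangles over the full cube $\{0,1\}^n$: each vertex is a two-fold marked unlink, whose branched cover is a connected sum of copies of $S^1\times S^2$ carrying a bundle specified by the local marking. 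The computation of the vertex groups should mirror the Khovanov side exactly; in particular a vertex whose marking splits off an odd-marked component must contribute zero, paralleling the vanishing of $\text{Kh}(L,\omega)$ noted after Theorem \ref{prop:tqa}. I would then identify the edge maps, induced by lifts of the saddle cobordisms, with the twisted Khovanov differential over $\Z_2$, so that the $E^1$ page is the twisted Khovanov complex and the $E^2$ page—the starting page of the conjecture—is $\text{Kh}(L,\omega)$.

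The final structural step is to upgrade this family of maps into an honest filtered complex: one must produce the higher diagonal terms (chain homotopies) making the cube into a twisted complex whose total differential squares to zero, after which the filtration by the number of $1$-smoothings yields the spectral sequence. The main obstacle is precisely this coherence problem. The surgery exact triangle alone supplies only the $E^1$ differential together with a first homotopy, and assembling the whole cube requires naturality and functoriality properties of framed instanton homology—well-definedness of the cobordism maps and compatibility of the homotopies across commuting squares—that are considerably more delicate than the purely homological-algebra input available in the combinatorial theory. A secondary difficulty is matching the $\omega$-dependent coefficients in the twisted Khovanov differential of \cite{roberts,jaeger} with the weights of the instanton cobordism maps; working over $\Z_2$ removes sign ambiguities, but one must still verify that the twisting is transported correctly under the branched-cover correspondence. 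Finally, one would want the sequence to be compatible with the $\delta$-grading on $\text{Kh}(L,\omega)$ and the $\Z_4$-grading on $I^\#$, which is what would make the matching ranks of Theorems \ref{prop:tqa} and \ref{thm:tqa} force a collapse of the sequence for TQA links.
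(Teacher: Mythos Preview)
This statement is a \emph{conjecture} in the paper, not a theorem; the authors do not prove it, and they explain why your outline does not go through as written.

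The gap is in your identification of the $E^1$ page. In the link surgeries spectral sequence, the $E^1$ page is the direct sum over $u\in\{0,1\}^n$ of the framed instanton groups $I^\#(\Sigma(D_u^\dagger),\check{\omega}_u)$. When $\check{\omega}_u$ is non-trivial, the underlying unperturbed chain complex already vanishes (the relevant bundle is non-trivial on a connected sum of $S^1\times S^2$'s, which supports no flat connections). So these vertices contribute zero to $E^1$, and the $d_1$ differential among the remaining vertices is the ordinary reduced Khovanov edge map. The resulting $E^2$ page is therefore $\text{Hd}(D,\check{\omega})$, not $\text{Kh}(L,\omega)$: you have recovered exactly Proposition~\ref{prop:ss} and Theorem~\ref{thm:ss}, not the conjecture. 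Your sentence ``a vertex whose marking splits off an odd-marked component must contribute zero, paralleling the vanishing of $\text{Kh}(L,\omega)$'' conflates two different things: on the Khovanov side those summands $C_u$ are \emph{nonzero} as vector spaces but have trivial $d_v$-homology, whereas on the instanton side they are zero on the nose. It is precisely this mismatch that prevents the $E^1$ page from being the full complex $C(D,\check{\omega})$.

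The paper's own discussion (end of the introduction) names this obstruction explicitly and sketches what would be needed: one should introduce holonomy perturbations at each vertex with non-trivial bundle so that the perturbed chain complex there is nonzero but acyclic, in a way coherent across the cube, and then filter the total complex so that the vertical differential $d_v$ appears before the horizontal Khovanov differential. That would replace your $E^1$ page with something quasi-isomorphic to $C(D,\check{\omega})$ and push $\text{Kh}(L,\omega)$ one page later. Carrying this out is the open content of the conjecture.
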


\noindent We will prove a less natural result which nonetheless gives rank bounds, and also provides the evidence for the conjecture. Given a two-fold marked diagram $(D,\check{\omega})$ representing $(L,\omega)$, we define $\text{Hd}(D,\check{\omega})$, the {\emph{dotted diagram homology of $(D,\check{\omega})$}. This is not an invariant of $(L,\omega)$, but arises naturally as a starting-page for a spectral sequence converging to $\text{Kh}(L,\omega)$. We will describe a spectral sequence with $E^2$-page $\text{Hd}(D,\check{\omega})$ that converges to $I^\#(\Sigma(L^\dagger),\omega)$. The result is stated below as Theorem \ref{thm:ss}, and we will shortly present an example.

\begin{figure}[t]
\centering
\includegraphics[scale=.40]{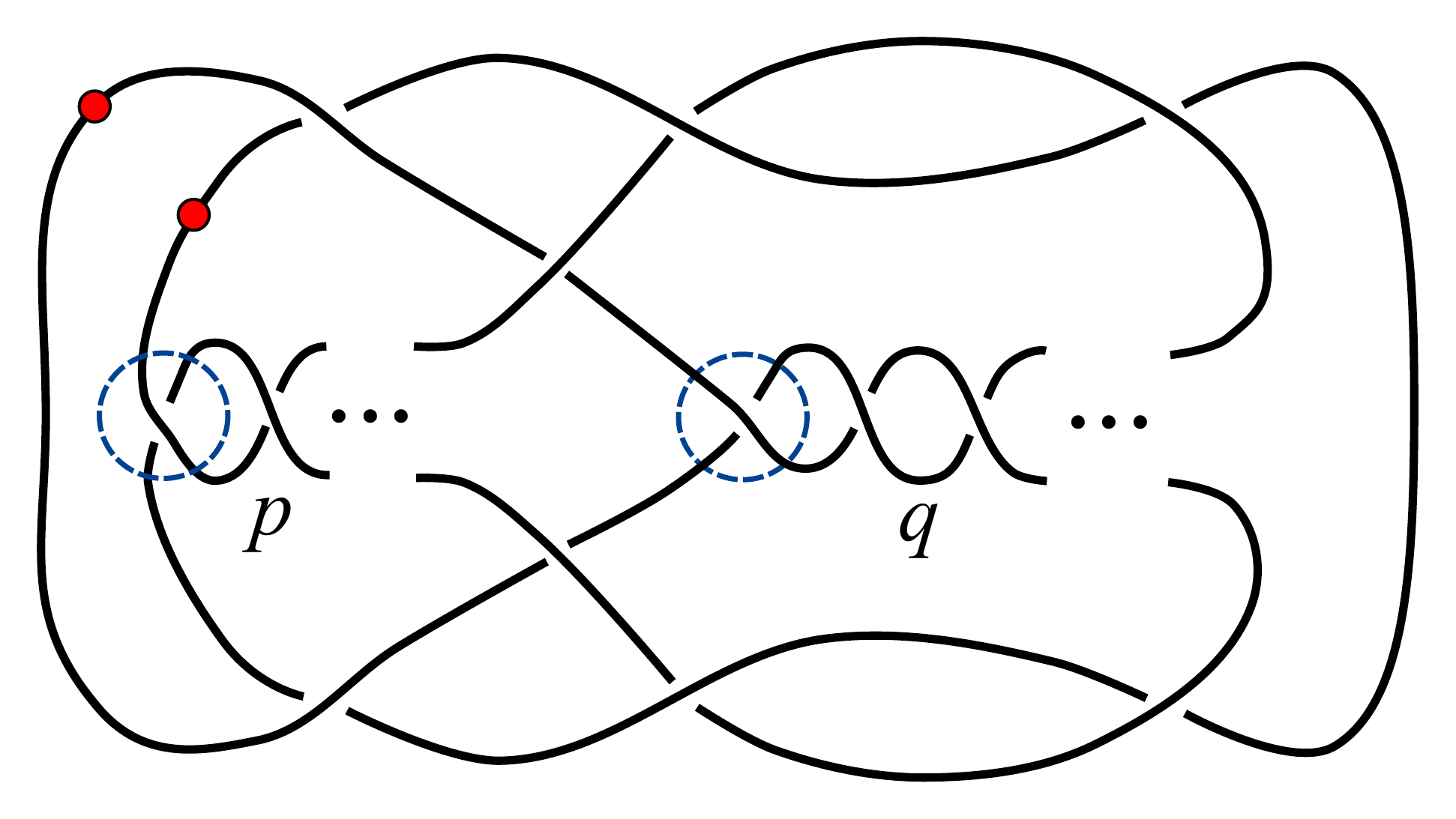}
\caption{The Kanenobu knot $K_{p,q}$. There are $p$ and $q$ many half-twists where indicated. $K_{0,0}$ is a sum of two $4_1$ knots, which is alternating. Also, $\det(K_{p,q})=25$ for all $p,q$. Resolving the left (resp. right) encircled crossing in two ways, we obtain an unlink split into odd-marked unknots, and $K_{p\pm 1,q}$ (resp. $K_{p,q\pm 1}$), where the sign depends on the sign of $p$ (resp. $q$). Thus $K_{p,q}$ is TQA by induction.}\label{fig:kanenobu}
\end{figure}

There is also a spectral sequence from $\text{Hd}(D,\check{\omega})$ converging to $\kh(L,\omega)$. In all cases known to the authors, for any $(L,\omega)$ there is a diagram $(D,\check{\omega})$ such that $\text{Hd}(D,\check{\omega})=\kh(L,\omega)$, and we speculate that this can always be arranged. We also mention that for any diagram $(D,\check{\omega})$ of $(L,\omega)$, the group $\kh(L,\omega)$ inherits from Theorem \ref{thm:ss} a $\Z_4$-grading $\delta^\#$, which can be expressed in terms of the $\delta$ grading and the homological grading $h$ on $\text{Hd}(D,\check{\omega})$, as well as a term $\theta\in \Z_2$ which is easily computed from the dotted oriented resolution of $(D,\check{\omega})$. We write

\begin{equation}
    \kh(L,\omega)_{\delta^\#}, \qquad \delta^\# := 2h - \delta + 2\theta \;\;\text{(mod 4)}\label{eq:z4group}
\end{equation}
\vspace{.01cm}

\noindent for the resulting $\Z_4$-graded group. Note that the $\delta^\#$ grading reduces mod 2 to the $\Z_2$-grading induced by $\delta$. For all $(L,\omega)$ in which $L$ is prime, has 10 crossings or less, and is more than one component, we were able to choose $\check{\omega}$ such that $\text{Hd}(D,\check{\omega})=\kh(L,\omega)$, and thus verified that there is a $\Z_4$-graded spectral sequence from (\ref{eq:z4group}) converging to $I^\#(\Sigma(L^\dagger),\omega)$. The computations of (\ref{eq:z4group}) are listed in Table \ref{table}. It was not clear to the authors how the grading $\delta^\#$ could be defined on $\kh(L,\omega)$ without reference to $\text{Hd}(D,\check{\omega})$, or if it is an invariant of $(L,\omega)$.

The simplest non-trivial example (with $\omega$ non-trivial) is the following. The first prime link $L$ that has a non-trivial two-fold marking $\omega$ for which $(L,\omega)$ is not TQA is the 3-component link L8n3. In fact, two of the three non-trivial two-fold markings for L8n3 are TQA, and the one that is not TQA is depicted in Figure \ref{fig:L8n3}, which we henceforth write as $(L,\omega)$. Theorem \ref{thm:ss} implies:
\[
	\text{dim}_{\Z_2} I^\#(\Sigma(L^\dagger),\omega)_{(i)} \leq 3, \quad i\in \{0,2\}\subset \Z_4,
\]
\[
	\text{dim}_{\Z_2} I^\#(\Sigma(L^\dagger),\omega)_{(i)} \leq 1, \quad i\in \{1,3\}\subset \Z_4.
\]
\vspace{-.20cm}

\noindent We note that the determinant of L8n3 is 4. For these inequalities, a priori one needs to fix an absolute $\Z_4$-grading, which can be done by orienting the link $L$. However, it is evident in this case that the choice does not matter. This seems to be a general phenomenon: in all of the cases known to the authors, the dotted diagram homology of Theorem \ref{thm:ss} has a 2-periodic $\Z_4$-grading.

Finally, we remark on Conjecture \ref{conj:1}. This conjecture does not follow immediately from the usual link surgeries method of \cite{ozsz} because the unperturbed framed instanton chain complex vanishes for the double branched cover of an unlink with a non-trivial $SO(3)$-bundle. One might like to use holonomy perturbations for each such branched cover in a way compatible with the structure of the spectral sequence, so that each resulting chain complex looks like the instanton homology of a branched cover with trivial bundle, but with non-trivial differential. One should then use a filtration on the sum of the perturbed complexes to yield Conjecture \ref{conj:1}. Adapting the filtration results of \cite{kmfiltrations} to framed instanton homology would be progress towards this approach.\\

\vspace{.65cm}

\noindent {\textbf{Further discussion.}} The genesis of spectral sequences from Khovanov homology to Floer homology is the work of Ozsv\'{a}th and Szab\'{o}, in the context of their Heegaard Floer homology \cite{os}. An analogue spectral sequence in Seiberg-Witten monopole Floer homology was constructed by Bloom \cite{bloom}. Each of these is with $\Z_2$ coefficients. These spectral sequences, with Thm. \ref{prop:tqa}, imply:\\

\begin{corollary}
    If a link $L$ is TQA, then the Heegaard-Floer homology $\widehat{HF}(\Sigma(L))$ and monopole Floer homology $\widetilde{HM}(\Sigma(L))$ with $\Z_2$ coefficients are of rank $\det(L)$.\\
\end{corollary}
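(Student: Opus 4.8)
The plan is to squeeze the rank of the target Floer group from both sides: Theorem~\ref{prop:tqa} controls the reduced Khovanov homology, the cited spectral sequences give an upper bound, and a standard Euler-characteristic count for rational homology spheres gives a matching lower bound. First I would reduce the twisted invariant to the ordinary one: taking $\omega$ to be the trivial two-fold marking, the excerpt records $\kh(L,\omega)=\overline{\text{Kh}}(L;\Z_2)$, so Theorem~\ref{prop:tqa} says exactly that $\overline{\text{Kh}}(L;\Z_2)$ is supported in $\delta$-grading $0$ with total rank $\det(L)$; that is, $L$ is mod~$2$ Khovanov thin. Since mirroring dualizes reduced Khovanov homology and preserves its total rank, and since $\det(L^\dagger)=\det(L)$, the same rank statement holds for whichever of $L$ or $L^\dagger$ sits on the $E_2$-page, so the convention ($\Sigma(L)$ versus $-\Sigma(L)=\Sigma(L^\dagger)$) will not affect the conclusion.

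For the upper bound I would invoke the (reduced) Ozsv\'ath--Szab\'o spectral sequence whose $E_2$-page is $\overline{\text{Kh}}(L^\dagger;\Z_2)$ and which converges to $\widehat{HF}(\Sigma(L);\Z_2)$. Every page is a subquotient of the previous one, so the total rank only decreases, giving $\dim_{\Z_2}\widehat{HF}(\Sigma(L))\le \det(L)$. For the lower bound I would first note that $\Sigma(L)$ is a rational homology sphere: for a TQA link the determinant is assembled from the unknot value $\det=1$ purely through the additivity relation $\det(L)=\det(L_0)+\det(L_1)$, while the split clause (2) of Definition~\ref{defn:tqa} cannot be invoked under trivial marking data (it would require an odd number of dots on each side of a split with zero dots total). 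Hence $\det(L)\ge 1$ and $|H_1(\Sigma(L);\Z)|=\det(L)\neq 0$. Now the Ozsv\'ath--Szab\'o Euler-characteristic formula gives $|\chi(\widehat{HF}(\Sigma(L)))|=|H_1(\Sigma(L))|=\det(L)$, and since the total dimension of the homology of any complex is at least the absolute value of its Euler characteristic, $\dim_{\Z_2}\widehat{HF}(\Sigma(L))\ge\det(L)$. Combined with the upper bound this forces equality; equivalently, one sees that each of the $\det(L)$ spin$^c$ summands of $\widehat{HF}$ must be nonzero, so $\Sigma(L)$ is an $\F_2$-L-space.

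The monopole statement follows by the identical squeeze, replacing the Ozsv\'ath--Szab\'o spectral sequence by Bloom's monopole analogue with the same $E_2$-rank $\det(L)$, and using that the corresponding Euler characteristic of $\widetilde{HM}(\Sigma(L))$ is again $\pm\det(L)$ (or, alternatively, invoking the isomorphism between monopole and Heegaard Floer homology to transport the previous rank computation). I do not expect a genuine obstacle here, as the statement is essentially a formal corollary of Theorem~\ref{prop:tqa} together with the two cited convergence results; the only care required is bookkeeping, namely matching the mirror and orientation conventions of the two spectral sequences so that the $E_2$-rank is indeed $\det(L)$, and confirming $\det(L)\neq 0$ so that the Euler-characteristic lower bound is available.
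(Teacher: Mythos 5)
Your argument is correct and is exactly the one the paper intends: the Ozsv\'ath--Szab\'o and Bloom spectral sequences bound the ranks of $\widehat{HF}(\Sigma(L))$ and $\widetilde{HM}(\Sigma(L))$ above by the rank $\det(L)$ of the $E_2$-page supplied by Theorem~\ref{prop:tqa}, and the Euler-characteristic identity $|\chi|=|H_1(\Sigma(L);\Z)|=\det(L)$ gives the matching lower bound (with the mirror/orientation conventions immaterial at the level of ranks over a field). The only step you state too quickly is $\det(L)\geq 1$: clause (2) of Definition~\ref{defn:tqa} \emph{can} be invoked for the resolutions $(L_i,\omega_i)$ even when $\omega$ is trivial (see Figure~\ref{fig:resolutiondependence}), so the recursion does pass through determinant-zero two-fold marked links; nonetheless $\det(L)>0$ holds, for instance because the rank formula in Theorem~\ref{thm:tqa} forces $\det(L)\geq 2^{|L|-1}$, or because having both resolutions split into odd-marked links would force $\overline{\text{Kh}}(L;\F)=0$, contradicting $J_L(1)=2^{|L|-1}\neq 0$ as in the Claim in Section~\ref{sec:tqa}.
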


\noindent We expect that this result holds over $\Z$, with the help of twisted exact triangles. In the context of instanton homology, Kronheimer and Mrowka \cite{kmunknot} 
proved that reduced Khovanov homology with $\Z$ coefficients detects the unknot, by constructing a spectral sequence from reduced Khovanov homology to the $\Z_4$-graded singular instanton homology $I^\natural(L)$. Their spectral sequence implies\\

\begin{corollary}
    If $L$ is TQA, then $I^\natural(L)$ with $\Z_2$ coefficients is of rank $\det(L)$.\\
\end{corollary}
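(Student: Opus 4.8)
The plan is to sandwich the $\Z_2$-rank of $I^\natural(L)$ between an upper bound coming from the Kronheimer--Mrowka spectral sequence and a lower bound coming from the Euler characteristic, exactly as in the proofs of the two preceding corollaries. The key input from the present paper is the rank statement of Theorem \ref{prop:tqa}: taking $\omega$ to be the trivial two-fold marking data, $\kh(L,\omega)=\overline{\kh}(L;\Z_2)$ has rank $\det(L)$.

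First I would establish the upper bound. The Kronheimer--Mrowka spectral sequence of \cite{kmunknot}, taken with $\Z_2$ coefficients, has its $E_2$-page equal to reduced Khovanov homology $\overline{\kh}(L;\Z_2)$ and converges to $I^\natural(L)$. Since passing from one page of a spectral sequence to the next can only decrease total rank, we obtain
\[
    \text{rank}_{\Z_2}\, I^\natural(L) \;\leq\; \text{rank}_{\Z_2}\, \overline{\kh}(L;\Z_2) \;=\; \det(L),
\]
where the final equality is Theorem \ref{prop:tqa}.

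For the matching lower bound I would invoke the fact, also due to Kronheimer and Mrowka, that the Euler characteristic of $I^\natural(L)$ computed with respect to its $\Z_4$-grading, reduced mod $2$, equals $\pm\det(L)$. Since the total $\Z_2$-rank of any finitely generated graded group is at least the absolute value of its Euler characteristic, this yields
\[
    \text{rank}_{\Z_2}\, I^\natural(L) \;\geq\; \left| \chi\bigl(I^\natural(L)\bigr) \right| \;=\; \det(L).
\]
Combining the two inequalities forces $\text{rank}_{\Z_2}\, I^\natural(L)=\det(L)$, as desired.

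I expect the only delicate point to be importing the Kronheimer--Mrowka results in precisely the right form: one must confirm that their spectral sequence is available over $\Z_2$ with $E_2$-page the reduced theory, and that their determinant computation of the Euler characteristic is stated for the grading in play. An alternative route avoids the Euler characteristic entirely: since Theorem \ref{prop:tqa} shows $\overline{\kh}(L;\Z_2)$ is supported in a single $\delta$-grading, one can argue that every higher differential $d_r$ with $r\geq 2$ strictly shifts $\delta$ and hence vanishes, so that the spectral sequence degenerates and $I^\natural(L)\cong\overline{\kh}(L;\Z_2)$ as $\Z_2$-vector spaces. This degeneration argument is cleaner conceptually but requires a careful analysis of how the higher differentials interact with the $\Z_4$-grading on $I^\natural(L)$, which is why I would favor the Euler-characteristic sandwich.
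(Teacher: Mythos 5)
Your argument is correct and is exactly the one the paper intends: the paper derives this corollary in one line from the Kronheimer--Mrowka spectral sequence together with Theorem \ref{prop:tqa}, and the implicit content of that one line is precisely your sandwich (upper bound on rank from the collapse of ranks along the spectral sequence, lower bound from the Euler characteristic $\pm\det(L)$, which is preserved from the $E_2$-page since the differentials are odd with respect to the mod 2 grading). Your caveats about importing the KM results over $\Z_2$ and about the mirror convention are the right ones to flag, but they do not affect the rank statement.
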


\noindent We do not know if this result should hold over $\Z$.

Using ideas related to the aforementioned spectral sequences, see also \cite{kmfiltrations}, Daemi \cite{daemi} showed how abelian instantons induce filtrations on reduced odd Khovanov homology. The first author \cite{scaduto} constructed a spectral sequence from the reduced odd Khovanov homology converging to the framed instanton homology of the double branched cover.
The current article is the result of attempting to extend the results of \cite{scaduto} to handle non-trivial bundles.

We expect that our restriction to $\Z_2$ coefficients in Theorem \ref{thm:ss} can be removed. The twisted Khovanov theory with signs due to Manion \cite{manion} should be relevant here. We also mention that our constructions are reminiscent of the spanning tree models that have appeared in various link and Floer homology theories \cite{greenespanning,baldwinlevine,wehrli,ckspanning}.

Finally, we mention that the constructions in this paper bear similarity to Kronheimer and Mrowka's $SO(3)$ instanton homology for webs \cite{kmtait}. By drawing explicit arcs for $\omega$ to represent the Poincar\'{e} dual of the second Stiefel-Whitney class of the associated $SO(3)$ bundle over $\Sigma(L)$, we obtain a web $L\cup \omega$. The exact triangles for the instanton homology of such webs in \cite{kmtriangle} are formally similar to ones we use.\\

\vspace{.65cm}

\noindent {\textbf{Outline.}} We define $\kh(L,\omega)$ and $\text{Hd}(D,\check{\omega})$ in Section \ref{sec:twisted}. In Section \ref{sec:framed}, we review framed instanton homology for double branched covers, $I^\#(\Sigma(L),\omega)$, and in Section \ref{sec:gradings} we study the degrees of cobordism maps. The spectral sequence from $\text{Hd}(D,\check{\omega})$ converging to $I^\#(\Sigma(L^\dagger),\omega)$ is presented in Section \ref{sec:ss}. In Section \ref{sec:tqa} we prove Thm. \ref{thm:tqa}. As noted in Section \ref{sec:twisted}, the proof of Thm. \ref{prop:tqa} is similar. In Section \ref{sec:more}, we find many TQA Montesinos knots. In Section \ref{sec:computer}, we list computer calculations of twisted Khovanov homology, and discuss the resulting rank inequalities for $I^\#(\Sigma(L),\omega)$. The reader only interested in Khovanov homology can read Sections \ref{sec:twisted}, \ref{sec:more} and \ref{sec:computer}.\\

\vspace{.30cm}

\noindent {\textbf{Acknowledgements.}} The authors would like to thank Aliakbar Daemi for helpful comments and suggestions. We also thank John Baldwin, Francis Bonahon, Josh Greene, Andy Manion and Ciprian Manolescu for helpful discussions. The first author was supported by NSF grant DMS-1503100.

\vspace{.85cm}

\newpage

\section{Twisted Khovanov homology and dotted diagrams}\label{sec:twisted}
In this section we describe our construction of twisted Khovanov homology $\text{Kh}(L,\omega)$, which is an invariant of the (oriented) two-fold marked link $(L,\omega)$. We work with $\F=\Z_2$ coefficients throughout. This invariant is a modification of Roberts' totally twisted Khovanov homology \cite{roberts} in the spirit of Jaeger \cite{jaeger}. In particular, we use the viewpoint of dots on arcs as in \cite{jaeger} instead of regions, which was used in \cite{roberts}. We also mention that Baldwin, Levine and Sarkar recently introduced Khovanov homology for pointed links, and they relate their construction to the aforementioned ones \cite[Remark 2.8]{bls}. Our construction is obtained from these by working over $\F$ and setting certain polynomial variables equal to $0$ or $1$, as determined by the dots. We warn the reader that the conventions we use do not uniformly agree with any of these references.

Let $(D,\check{\omega})$ be a two-fold marked diagram compatible with $(L,\omega)$. Consider the cube of $2^n$ (complete) resolutions of $D$, where $n$ is the number of crossings in $D$. For a given $u\in \{0,1\}^n$ and corresponding resolution diagram $D_u$ of disjoint circles, we obtain two-fold marking data $\check{\omega}_u$ for $D_u$ by carrying the dots along in the natural way, and by counting dots modulo 2.

Now we form the twisted Khovanov complex $C(D,\check{\omega})$. For each resolution vertex $u\in\{0,1\}^n$ the resolution diagram $D_u$ is a disjoint union of planar circles which we label $a_1,\ldots,a_k$. Define the $\F$-exterior algebra $\Lambda_u=\Lambda(a_1,\ldots,a_k)$ with generators given by the circles. Consider the subalgebra $C_u \subset \Lambda_u$ generated by the kernel of the augmentation map $\Lambda_u\longrightarrow \F$ which sends each $a_j$ to $1$.  The vector space $C(D,\check{\omega})$ is then given by the direct sum of the $C_u$ over all vertices $u\in\{0,1\}^n$. 
The differential $d$ is given as a sum of ``horizontal'' and ``vertical'' differentials:
\[
	d = d_v + d_h.
\]
The differential $d_h$ is the usual Khovanov differential, see Figure \ref{fig:mergesandsplits}. The vertical differential $d_v$ is a differential on each $C_u$, and is given as follows. Let the resolution diagram $D_u$ have circles $a_1,\ldots,a_k$, and let $\check{\omega}_j\in \{0,1\}$ record the number of dots modulo 2 on circle $a_j$. Then
\begin{equation}
	d_v(\;\cdot\;) =\sum_{j=1}^k\check{\omega}_j a_j\wedge(\; \cdot\;).\label{eq:vert}
\end{equation}
We equip our chain complex with a $\Z$-grading, called $\delta$, which is linear combination of a quantum grading $(q)$ and homological grading $(h)$. To define these gradings we orient $L$, which induces an orientation of $D$. For an element $x=a_1\wedge \cdots \wedge a_i \in C_u$, we define:
\[
	q(x) = k-1-2i  + |u|_1 + n_+ - 2n_-, \qquad h(x) = |u|_1 - n_-.
\]
Here $k$ is the number of circles in $D_u$, $n_{\pm}$ is the number of $\pm $-crossings in the diagram $D$, and $|u|_1$ is the $L^1$-norm of the vertex $u\in \{0,1\}^n$. Then our $\delta$-grading is defined to be
\begin{equation}
	\delta = \frac{1}{2}q - h -\frac{1}{2}\sigma+\frac{3}{2}\nu\label{eq:deltagrad}
\end{equation}
where $\sigma$ and $\nu$ are the signature and nullity of $L$. The nullity may be defined as $b_1(\Sigma(L))$. Our convention, as in \cite{bloom}, is that the signature of the right-handed trefoil is +2, which is minus the signature of a Seifert matrix.  \\

\begin{defn}
	The {\emph{twisted Khovanov homology of $(L,\omega)$}} is defined to be the homology of the $\delta$-graded chain complex $(C(D,\check{\omega}),d)$. It is denoted $\text{{\emph{Kh}}}(L,\omega)$.\\
\end{defn}

\noindent When $\omega$ consists of all zeros, i.e. the two-fold marking data is ``trivial,'' we write $\omega=\vec{0}$. Note that for a {\emph{knot}} there is only the trivial two-fold marking data $\vec{0}$. We list some basic properties of $\text{Kh}(L,\omega)$.\\

\begin{itemize}
\item The $\delta$-graded homology $\text{Kh}(L,\omega)$ is an invariant of the oriented link $L$ and choice of two-fold marking data $\omega$ (cf. \cite[Thm. 2.10]{roberts}, \cite[Thm. 3.1]{jaeger}).
\item The $\delta$-graded twisted homology $\text{Kh}(L,\vec{0})$ is the same as the (reduced) $\delta$-graded Khovanov homology $\overline{\text{Kh}}(L;\F)$.
\item If $(L,\omega)$ is TQA, then $\text{Kh}(L,\omega)$ is supported in grading $\delta =0$, and has $\F$-dimension $\text{det}(L)$ (In the QA case, see \cite[Thm. 2.18]{roberts}).
\item If $(L,\omega)$ splits into odd-marked links, then $\kh(L,\omega)=0$.\\
\end{itemize}

\noindent The third item is of course Theorem \ref{prop:tqa}. Its proof is similar to the proof of Theorem \ref{thm:tqa}, which is presented in Section \ref{sec:tqa}, and for this reason, we only indicate here how it differs.
First, it was mentioned in the introduction that the proof of Theorem \ref{prop:tqa} is similar to that of the untwisted, quasi-alterating case, cf. \cite{mo}. More specifically, one uses the twisted exact triangle\\
\begin{equation}
\begin{tikzcd}
                \cdots \;\;\kh(L_1,\omega_1) \arrow{r} &  \kh(L,\omega) \arrow{r}     &  \kh(L_0,\omega_0)              \arrow{r}                 &  \kh(L_1,\omega_1)\;\; \cdots 
\end{tikzcd}\label{eq:skeintriangle}
\end{equation}
\vspace{.0cm}

\noindent in which the three two-fold marked links are related by dotted resolution diagrams in the usual way. This exact sequence follows in the same way as does the untwisted version. Second, this exact sequence behaves the same way with respect to the $\delta$-gradings as in \cite{mo}. Finally, suppose that $\det(L_1)=0$ and $\kh(L_1,\omega_1)=0$. Then (\ref{eq:skeintriangle}) induces an isomorphism from $\kh(L,\omega)$ to $\kh(L_0,\omega_0)$ which respects the $\delta$-gradings. (Indeed, \cite[Lemma 2.1]{mo} holds only assuming $\det(L_v)>0$ and $\det(L_h)=0$, from whence this claim follows from \cite[Prop. 2.3]{mo}.) From these remarks and the argument for Theorem \ref{thm:tqa}, Theorem \ref{prop:tqa} follows.\\
\vspace{.50cm}

\begin{figure}[t]
\centering
\includegraphics[scale=.33]{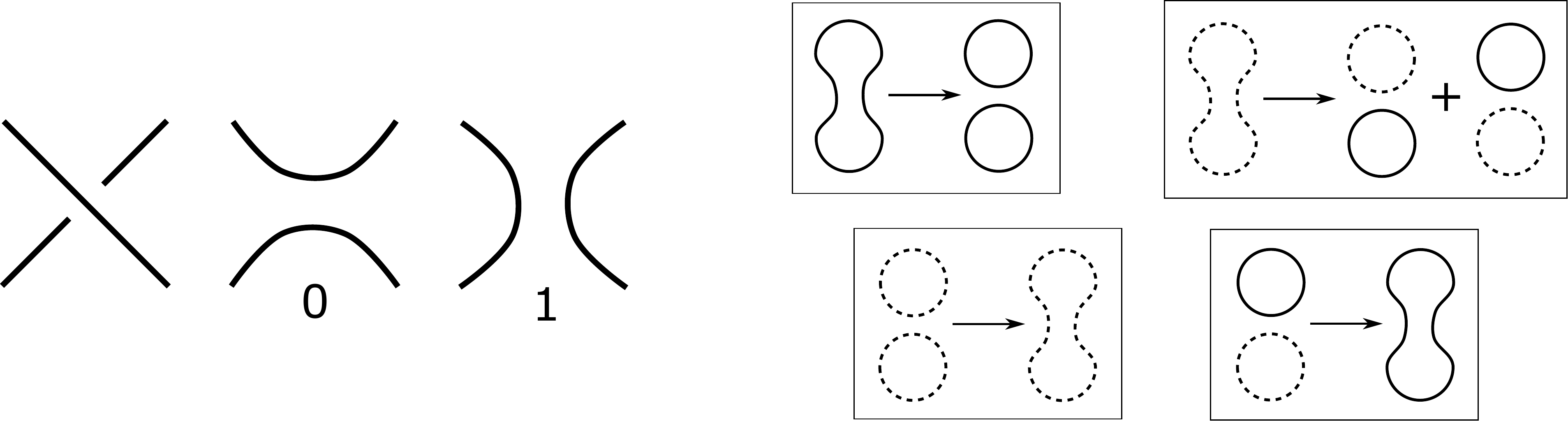}
\caption{On the left, we provide the convention for 0- and 1-resolutions of crossings. On the right, we illustrate the (horizontal, unreduced) Khovanov differential for splits and merges. For a given complete resolution with circles $a_1,a_2,\ldots$, an element $a_1\wedge \cdots \wedge a_i$ corresponds to the picture in which $a_1,\ldots,a_i$ are solid and all other circles are dotted.}\label{fig:mergesandsplits}
\end{figure}

\noindent {\bf{The double complex.}} The horizontal and vertical differentials $d_h$ and $d_v$ commute with one another, whence $C(D,\check{\omega})$ inherits the structure of a double complex. In particular, we can first take the vertical homology, $H_\ast(C(D,\check{\omega}),d_v)$. This has a particularly simple form. Referring to (\ref{eq:vert}), we observe that on $C_u$ the differential $d_v$ is an isomorphism or zero, depending on whether or not $\check{\omega}_u$ is non-trivial. Thus the vertical homology is the same vector space as $C(D,\check{\omega})$ but with summands $C_u$ thrown out if there are an odd number of dots on any circle in $D_u$. An example is depicted in Figure \ref{fig:L4a1}. We make the following definition:\\

\begin{defn}
	Denote by $\text{{\emph{Hd}}}(D,\check{\omega})$ the homology of the vertical homology $H_\ast(C(D,\check{\omega}),d_v)$ with respect to $d_h$. We call  $\text{{\emph{Hd}}}(D,\check{\omega})$ the {\emph{dotted-diagram homology of $(D,\check{\omega})$}}.\\
\end{defn}

\noindent The dotted-diagram homology is not an invariant of $(L,\omega)$. It is invariant under any version of dotted Reidemeister I moves,  but not dotted Reidemeister II or Reidemeister III moves. See Figure \ref{fig:noninvariance}. We do note, however, that certain dotted moves, depicted in Figure \ref{fig:noninvariance}, do not change the complex $C(D,\check{\omega})$ at all, and so $\text{Hd}(D,\check{\omega})$ is invariant under such moves.\\

\begin{remark}
	Despite not being an invariant for $(L,\omega)$, we have given $\text{{\emph{Hd}}}(D,\check{\omega})$ a name because it is the starting page for a naturally arising spectral sequence converging to framed instanton homology of the branched double cover, as we discuss in the next section.\\
\end{remark}

\begin{figure}[t]
\centering
\includegraphics[scale=.17]{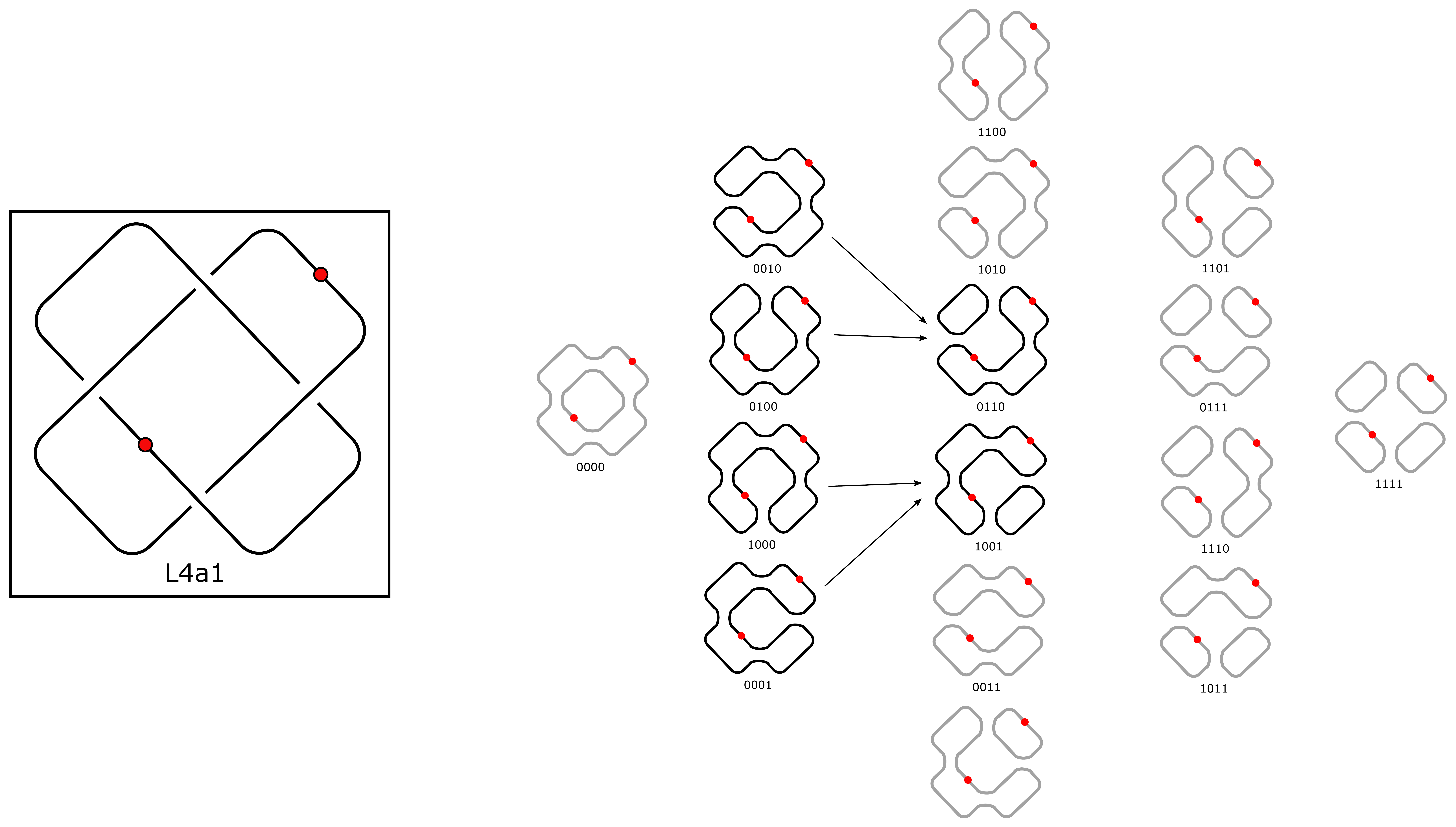}
\caption{The dotted-diagram complex, i.e. the vertical homology $H_\ast(C(D,\check{\omega}),d_v)$ with differential induced by $d_h$, for the two-component link L4a1 with non-trivial two-fold marking data. The complex is the usual reduced Khovanov complex, but with chain groups at resolutions with any circle supporting an odd number of dots omitted; these resolutions are faded in the illustration. This complex computes the twisted Khovanov homology, which is the same as the Khovanov homology, since L4a1 is alternating. The homology is rank $4=\text{det}$(L4a1), and is supported in $\delta$-grading 0.} \label{fig:L4a1}
\end{figure}

From the double complex structure of $(C(D,\check{\omega}),d)$, upon taking the vertical and horizontal homologies in different orders, we obtain the following two facts:\\

\begin{itemize}
\item There is a spectral sequence with $E^2$-page $\text{Hd}(D,\check{\omega})$ converging to $\text{Kh}(L,\omega)$.
\item There is a spectral sequence with $E^1$-page $\overline{\text{Kh}}(L;\F)$ converging to $\text{Kh}(L,\omega)$.\\
\end{itemize}

\noindent These are both $\delta$-graded. From the above discussion, we know that the second spectral sequence collapses when $L$ is TQA. For the first spectral sequence, we have the following observation. Suppose that for all $u,w\in\{0,1\}^n$ with $\check{\omega}_u,\check{\omega}_w$ trivial, there is no $v\in\{0,1\}^n$ with $\check{\omega}_v$ non-trivial such that $u \leq v \leq w$. Then it follows from the double complex filtration that the dotted diagram homology computes the twisted Khovanov homology. This happens also if $\text{Hd}(D,\check{\omega})$ is supported in two adjacent homological gradings, which is seen to hold in Figures \ref{fig:L4a1} and \ref{fig:L6n1comp}. \\

\begin{figure}[t]
\centering
\includegraphics[scale=.33]{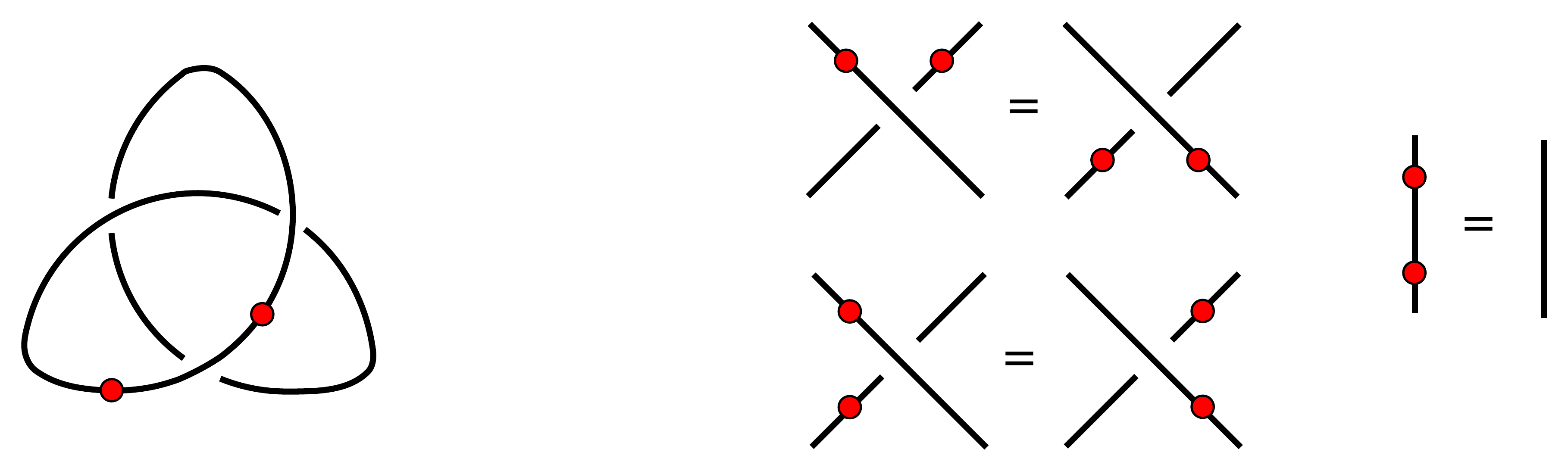}
\caption{On the left is a diagram $D$ of the unknot with two-fold marking data $\check{\omega}$ which is equivalent to the trivial two-fold marking data. However, in this case $\text{Hd}(D,\check{\omega})$ has rank $3$ instead of $1$. Since this diagram can be transformed into the unknot via either a (dotted) Reidemeister II or III move along with some Reidemeister I moves (all of which are valid), we cannot have invariance for either such dotted II or III moves. On the right we list some dotted moves that leave $C(D,\check{\omega})$ unchanged, and thus also $\text{Hd}(D,\check{\omega})$.}\label{fig:noninvariance}
\end{figure}

\section{Framed instanton homology for branched double covers}\label{sec:framed}

The framed instanton homology $I^\#(Y,\omega)$ is a $\Z_2$-graded abelian group defined for a closed, oriented and connected 3-manifold $Y$ and a choice of an embedded, oriented 1-manifold $\omega\subset Y$. However, until Section \ref{sec:tqa} we will assume the use of $\F$-coefficients. The isomorphism class of $I^\#(Y,\omega)$ only depends on $[\omega]\in H_1(Y;\Z_2)$. It is a Floer-homology group for the Chern-Simons functional defined on the framed configuration space of an $SO(3)$-bundle $E$ over $Y$ which has $w_2(E)$ Poincar\'{e} dual to $[\omega]$. The group $I^\#(Y,\omega)$ is 4-consecutive gradings of Floer's relatively $\Z_8$-graded (and absolutely $\Z_2$-graded) instanton homology for a non-trivial admissible bundle over $Y\# T^3$ which restricts to $E$ over $Y$ and a non-trivial bundle over $T^3$. Technically, we should keep track of the basepoint in $Y$ where the 3-torus is attached, but we will keep this hidden.

Let $L$ be a link in $S^3$ and $\Sigma(L)$ its branched double cover. With the correspondence of (\ref{eq:bundlemarking}) understood, we will use the notation $\omega$ for both the two-fold marking data on $L$, and a collection of embedded loops in $\Sigma(L)$ Poincar\'{e} dual to the second Stiefel-Whitney class of the associated $SO(3)$-bundle over $\Sigma(L)$. First we make a convenient definition.\\

\begin{defn}
	A two-fold marked diagram $(D,\check{\omega})$ is a {\emph{dotted surgery diagram}} if, up to the dotted moves in Figure \ref{fig:noninvariance}, it may be represented by a dotted picture of $D$ in which all dots are located near crossings, and near each crossing, there are either 0 or 2 dots. When there are 2 dots at a crossing we require that they are on adjacent arcs, not diagonal from each other. See Figure \ref{fig:surgerydiagram}.\\
\end{defn}

\noindent It seems likely that all two-fold marked diagrams $(D,\check{\omega})$ which are connected} are dotted surgery diagrams. (Note that a two-fold marked unlink with non-trivial marking data is not a dotted surgery diagram.)  In any case, it is easy to see that for any two-fold marked link $(L,\omega)$ there exists some dotted surgery diagram $(D,\check{\omega})$ representing it. We write $L^\dagger$ for the mirror of a link $L$, and similarly for diagrams.\\

\begin{prop} \label{prop:ss} Let $L$ be a link with two-fold marking data $\omega$ and let $D$ be a diagram of $L$ with two-fold marking data $\check{\omega}$ compatible with $\omega$.	Suppose $(D,\check{\omega})$ is a dotted surgery diagram. Then there is a spectral sequence with $E^2$-page isomorphic to the $\delta$ mod 2 graded dotted diagram homology $\text{{\emph{Hd}}}(D,\check{\omega})$ that converges to the mod 2 graded framed instanton homology {\emph{$I^\#(\Sigma(L^\dagger),\omega)$}}.\\
\end{prop}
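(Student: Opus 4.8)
The plan is to adapt the link surgeries spectral sequence of Ozsv\'ath--Szab\'o to framed instanton homology, following the construction of \cite{scaduto} for the trivial bundle but now carrying the two-fold marking data $\omega$ throughout. The starting point is that a single crossing resolution lifts, on branched double covers, to a surgery triad: the diagrams $D$, $D_0$, $D_1$ at a crossing yield branched covers $\Sigma(D^\dagger)$, $\Sigma(D_0^\dagger)$, $\Sigma(D_1^\dagger)$ related by surgery on a knot. The hypothesis that $(D,\check\omega)$ is a dotted surgery diagram is exactly what allows the geometry to be arranged globally: I would realize the whole cube of $2^n$ resolutions as the surgeries on an $n$-component framed link $\mathbb{L}$ in a fixed branched cover, with $\omega$ supported near the crossings so that at each vertex $u\in\{0,1\}^n$ its restriction is the bundle determined by $\check\omega_u$. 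The requirement that two dots at a crossing sit on adjacent (not diagonal) arcs is precisely what makes $\omega$ propagate consistently through each triad; the passage to the mirror $L^\dagger$ comes from the surgery orientation conventions. Feeding this framed link with bundle into the instanton surgery machinery produces a filtered complex, and its associated spectral sequence is the desired one.

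Next I would identify the $E^1$-page. For each vertex $u$ the resolution $D_u$ is an unlink of $k$ circles, so $\Sigma(D_u^\dagger)\cong \#^{k-1}(S^1\times S^2)$, with restricted bundle governed by the dot parities $\check\omega_u$. The key computation is that $I^\#(\#^{k-1}(S^1\times S^2),\omega_u)$ vanishes whenever some circle of $D_u$ carries an odd number of dots --- there the bundle is non-trivial on an $S^1\times S^2$ summand, on which framed instanton homology is zero --- and is otherwise isomorphic, as a graded vector space, to the reduced summand $C_u$. This is exactly the description of the vertical homology $H_\ast(C(D,\check\omega),d_v)$ recorded earlier: the summands $C_u$ with $\check\omega_u$ non-trivial are discarded. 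Hence the $E^1$-page is identified, vertex by vertex, with $H_\ast(C(D,\check\omega),d_v)$.

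It remains to identify the $d_1$ differential with the horizontal Khovanov differential $d_h$, so that $E^2\cong \text{Hd}(D,\check\omega)$, using that by definition $\text{Hd}(D,\check\omega)$ is the homology of $H_\ast(C(D,\check\omega),d_v)$ with respect to $d_h$. The differential $d_1$ is induced by the surgery cobordism maps on the elementary edges of the cube, i.e.\ by changing a single $0$-smoothing to a $1$-smoothing; on branched covers these are two-handle cobordism maps between the relevant connected sums of $S^1\times S^2$. I would compute them to agree with the merge and split maps of the reduced Khovanov TQFT, now tracking the dots exactly as $d_h$ does. For the trivial bundle this identification is carried out in \cite{scaduto}; the new point is that the dotted structure transports unchanged, which the adjacent-dots condition guarantees. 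Taking homology of $(E^1,d_1)=(H_\ast(C,d_v),d_h)$ then gives $E^2=\text{Hd}(D,\check\omega)$, and I would finish by matching the $\delta$ grading reduced mod $2$ with the absolute $\Z_2$-grading on $I^\#$ via the grading behavior of the cobordism maps.

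The main obstacle is the geometric input underlying the previous two steps: establishing the framed instanton surgery exact triangle in the presence of the non-trivial bundle $\omega$ and identifying its connecting maps with the Khovanov differential while correctly tracking the bundle data. Assembling the individual triangles into a genuine spectral sequence requires the standard homological-algebra package for a hypercube of complexes whose edge maps commute up to coherent homotopy; the analytic content --- that compositions of surgery maps around two-dimensional faces vanish up to homotopy, and that the bundle restrictions are compatible across all triads --- is where the dotted surgery diagram hypothesis does its work, and is the delicate part of the argument.
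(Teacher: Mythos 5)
Your proposal follows essentially the same route as the paper: apply the link surgeries spectral sequence of \cite{scaduto} to the cube of resolutions with bundle data, use the vanishing of $I^\#$ for connected sums of $S^1\times S^2$ with non-trivial bundle to identify the $E^1$/$E^2$-page with the dotted diagram complex, and match the edge maps with the Khovanov differential. The only point the paper makes explicit that you leave implicit is that the exact triangle at each crossing introduces an auxiliary lifted arc $\mu$, so the sequence a priori converges to $I^\#(\Sigma(L^\dagger),\omega\cup\mu)$, and one must check that the union of these arcs is mod 2 null-homologous to conclude.
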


\vspace{.20cm}

\begin{proof}
At a distinguished crossing of $D$, write $D_0$ and $D_1$ for the diagrams resulting from only performing a $0$- or $1$-resolution at that crossing, respectively. For $i=0,1$ we then write $\check{\omega}_i$ for the induced two-fold marking data on the diagram $D_i$. The relevant exact triangle is given as follows:\\
\begin{equation}
\begin{tikzcd}
                \cdots \;\; I^\#(\Sigma(D^\dagger),\check{\omega}\cup \mu) \arrow{r} &  I^\#(\Sigma(D_0^\dagger),\check{\omega}_0) \arrow{r}     &  I^\#(\Sigma(D_1^\dagger),\check{\omega}_1)              \arrow{r}                 &  I^\#(\Sigma(D^\dagger),\check{\omega}\cup\mu)\;\; \cdots  \label{eq:ses}
\end{tikzcd}
\end{equation}
\vspace{.15cm}

\noindent Here $\mu$ is an embedded loop in $\Sigma(D^\dagger)$ (resp. $\Sigma(D_0^\dagger)$, $\Sigma(D_1^\dagger)$) which is the lift of a small arc placed between the two strands near the relevant crossing in $D$ (resp. $D_0$, $D_1$). We apply this exact triangle to each crossing. More precisely, we apply the link surgeries spectral sequence of \cite{scaduto} with these choices. The result is a spectral sequence with $E^2$-page given by 
\[
	\bigoplus_{u\in \{0,1\}^n} I^\#(\Sigma(D_u^\dagger),\check{\omega}_u).
\]
If $\check{\omega}_u$ is non-trivial, then the associated $SO(3)$-bundle over $\Sigma(D_u^\dagger)$ is non-trivial. The framed instanton homology of $S^1\times S^2$ with a non-trivial bundle is zero, and this property persists to connect sums of $S^1\times S^2$. Further, the differential on the non-zero summands of the $E^2$-page matches the reduced Khovanov differential, cf. \cite{scaduto}. This identifies the $E^2$-page with $\text{Hd}(D,\check{\omega})$. Next, the spectral sequence converges to $I^\#(\Sigma(L^\dagger),\omega\cup\mu)$, where $\mu$ is the union of lifted arcs, one for each crossing. However, $\mu$ is mod 2 null-homologous. (In fact, an explicit bounding surface for $\mu$ is constructed in \cite{scaduto}. See also the proof Theorem \ref{thm:tqa} in Section \ref{sec:tqa}.) Finally, the mod 2 gradings are just as in \cite{scaduto}, and the bundles play no role in this grading.
\end{proof}

\begin{figure}[t]
\centering
\includegraphics[scale=.35]{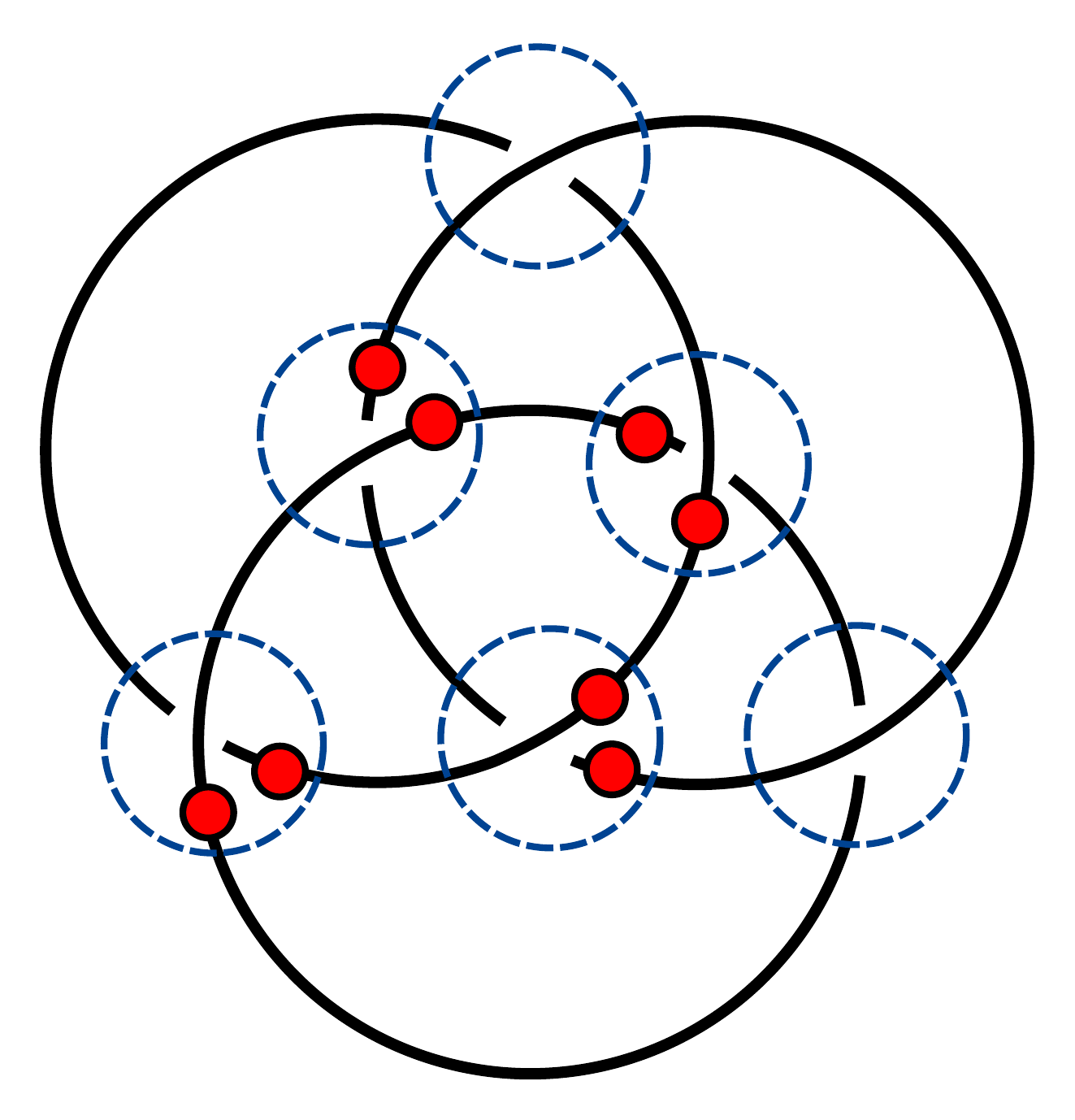}
\caption{A two-fold marked diagram for L6n1 which is a dotted surgery diagram. All dots are located near crossings. Near each crossing, there are either no dots, or two dots on adjacent arcs.}\label{fig:surgerydiagram}
\end{figure}

\vspace{.55cm}

\section{Spin structures and mod 4 gradings}\label{sec:gradings}

In general, the $\Z_2$-grading of $I^\#(Y,\omega)$ is covered by a relative $\Z_4$-grading. Write $\text{Spin}(Y)$ for the set of spin structures on $Y$ up to equivalence. Following Fr{\o}yshov \cite{froyshov}, a choice of spin structure $\mathfrak{s}\in \text{Spin}(Y)$ determines an {\emph{absolute}} $\Z_4$-grading $\text{gr}[\mathfrak{s}]$ lifting the $\Z_2$-grading. We write $\text{gr}_Y[\mathfrak{s}]$ when the dependence on $Y$ is to be emphasized. Further, Fr{\o}yshov points out that for $\mathfrak{s}_1,\mathfrak{s}_2\in \text{Spin}(Y)$,
\begin{equation}
	\text{gr}[\mathfrak{s}_2] - \text{gr}[\mathfrak{s}_1] \equiv 2\cdot \langle \mathfrak{s}_1 - \mathfrak{s}_2,[\omega]\rangle \mod 4.\label{eq:gradings}
\end{equation}
Here we recall that the difference of two spin structures is an element of $H^1(Y;\Z_2)$, and $\langle\cdot,\cdot\rangle$ is the natural pairing between $H^1(Y;\Z_2)$ and $H_1(Y;\Z_2)$. Now we return to double branched covers. Let $L$ be a link and write $\text{Or}(L)$ for the set of orientations of $L$. Given an orientation $o\in \text{Or}(L)$, we denote by $-o$ the orientation obtained from simultaneously reversing the orientations for each component of $L$ given by $o$. Turaev \cite{turaev} gives the following canonical bijection:\\
\[
\text{Spin}(\Sigma(L))
 \;\; \overset{1:1}\longleftrightarrow \;\; \text{Or}(L)/\pm
\]
Turaev refers to the equivalence classes $\pm o$ as {\emph{quasi-orientations}}.\\

\vspace{.15cm}

\noindent {\textbf{Gradings for framed instanton homology.}} Suppose $\mathfrak{a}$ is a generator for the chain complex computing $I^\#(Y,\omega)$. This means that $\mathfrak{a}$ is a critical point for a perturbed Chern-Simons functional defined on the relevant $SO(3)$-bundle over $Y\# T^3$. Let $X$ be a spin 4-manifold bounding $(Y,\mathfrak{s})$. Then $\text{gr}[\mathfrak{s}](\mathfrak{a})\in\Z_4$ is defined to be mod 4 congruent to
\[
	-\text{ind}(d_A^\ast \oplus d_A^+) -b_1(X) +b_+(X) -b_1(Y)
\]
up to the addition of a universal constant in $\Z_4$, which is chosen so that $I^\#(S^3)$ is supported in grading $0$. Here $d_A^\ast \oplus d_A^+$ is the ASD operator for a connection $A$, limiting to $\mathfrak{a}$, on an $SO(3)$ bundle over the boundary sum of $X$ and $T^2\times D^2$, with a cylindrical end attached. The operator is defined on suitable Sobolev completions, and its index is the expected dimension of the moduli space of finite-energy instantons limiting to $\mathfrak{a}$ for this cylindrical-end manifold with bundle. For a $(3+1)$-dimensional cobordism $X$ from $Y_1$ to $Y_2$ we define the following integer:

\[
	d(X):= -\frac{3}{2}(\chi(X) + \sigma(X)) +\frac{1}{2}(b_1(Y_2)-b_1(Y_1)) \in \Z.
\]
\vspace{.10cm}

\noindent Suppose further that $E$ is a bundle over $X$ that restricts to the bundle data determined by $\omega_i$ over $Y_i$ for $i=1,2$. We will encode the cobordism data $(X,E)$ in the more convenient, geometric form of the pair $(X,S)$, where $S$ is an embedded, unoriented surface in $X$ with $\partial S = \omega_1 \cup \omega_2$, and which represents the Poincar\'{e} dual of $w_2(E)$. For the following, we use the notation $\text{deg}[\mathfrak{s}_1,\mathfrak{s}_2]$ to record the mod 4 degree of a map $I^\#(Y_1,\omega_1)\longrightarrow I^\#(Y_2,\omega_2)$ in which for $i=1,2$ the vector space $I^\#(Y_i,\omega_i)$ is equipped with the $\Z_4$-grading $\text{gr}[\mathfrak{s}_i]$.\\

\begin{figure}[t]
\centering
\includegraphics[scale=.40]{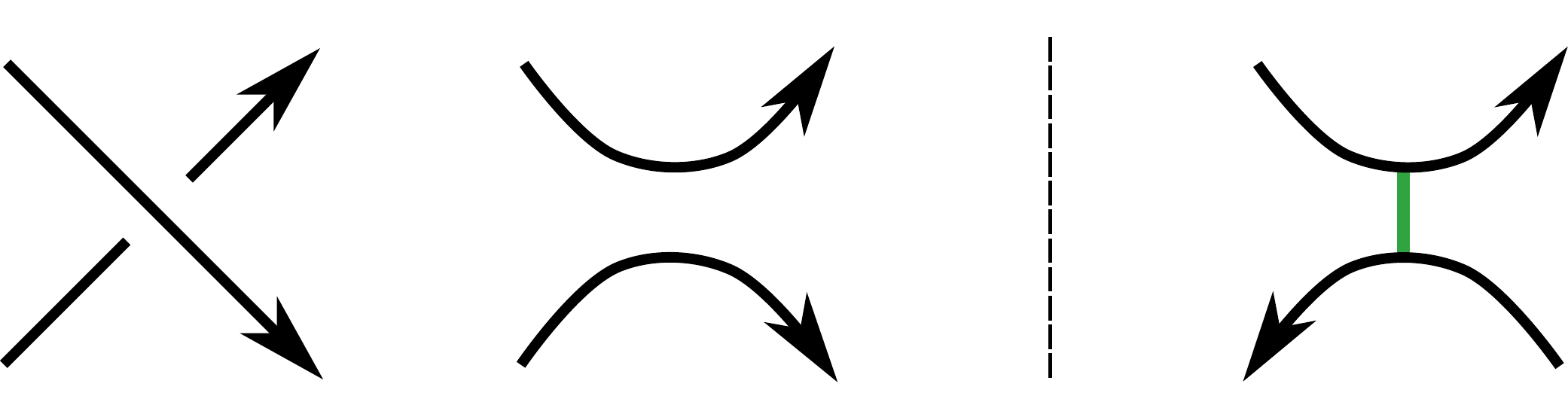}
\caption{On the left is depicted an oriented resolution. On the right, an arc connecting two oppositely directed segments, as appears in the definition of the canonical quasi-orientation.}\label{fig:orientedmoves}
\end{figure}

\vspace{.15cm}

\begin{prop} \label{prop:deg1}
Suppose $(X,S)$ is a cobordism from $(Y_1,\omega_1)$ to $(Y_2,\omega_2)$. If $X$ admits a spin structure restricting on the boundary to $\mathfrak{s}_i\in\text{{\emph{Spin}}}(Y_i)$ for $i=1,2$, then
\[
	\text{{\emph{deg}}}[\mathfrak{s}_1,\mathfrak{s}_2]\left(I^\#(X,S) \right) \equiv d(X) \mod 4.
\]
If $X$ is not neccesarily spin, but $\omega_1$ and $\omega_2$ are mod 2 nullhomologous, then
\[
	\text{{\emph{deg}}}[\mathfrak{s}_1,\mathfrak{s}_2]\left(I^\#(X,S) \right) \equiv  d(X) + 2(S\cdot S) \mod 4,
\]
and in this case the degree is independent of the choices $\mathfrak{s}_1,\mathfrak{s}_2$.\\
\end{prop}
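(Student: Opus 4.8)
The plan is to compute $\mathrm{deg}[\mathfrak{s}_1,\mathfrak{s}_2](I^\#(X,S))$ as a difference of the absolute gradings $\text{gr}[\mathfrak{s}_2](\mathfrak{a}_2)-\text{gr}[\mathfrak{s}_1](\mathfrak{a}_1)$ over a pair of generators joined by an index-zero instanton on $X$, since these are exactly the configurations contributing to the matrix entries of $I^\#(X,S)$. First I would fix a spin filling $W_1$ of $(Y_1,\mathfrak{s}_1)$ with a reference connection asymptotic to $\mathfrak{a}_1$; such fillings exist because the spin cobordism group $\Omega_3^{\mathrm{spin}}$ is trivial. Gluing $X$ onto $W_1$ along $Y_1$ produces a filling $W_2:=W_1\cup_{Y_1}X$ of $Y_2$ carrying the glued connection asymptotic to $\mathfrak{a}_2$ (the $T^2\times D^2$ framing stabilization is common to both ends and drops out of the difference). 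In the first case, the hypothesis that $X$ admits a spin structure restricting to $\mathfrak{s}_1,\mathfrak{s}_2$ guarantees that $W_2$ is again spin and restricts to $\mathfrak{s}_2$, so the grading definition of Section \ref{sec:gradings} applies verbatim at both ends.

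Next I would subtract the two grading formulas and feed in four inputs: the Atiyah--Singer index formula $\text{ind}(d_A^\ast\oplus d_A^+)=2p_1(E)-\tfrac{3}{2}(\chi+\sigma)$; Novikov additivity of the signature together with $\chi(Y_1)=0$, giving additivity of $\chi+\sigma$; the index-gluing (APS-type) correction at the separating $Y_1$; and a Mayer--Vietoris analysis of $b_1$ and $b_+$. The point, following Fr{\o}yshov \cite{froyshov}, is that the normalization $-b_1(W)+b_+(W)-b_1(Y)$ is precisely engineered so that the APS correction cancels the boundary defects and all $X$-dependence collapses to $d(X)$ apart from the Pontryagin number; since the counted instanton has vanishing index over $X$, it makes no further contribution. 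The residual term is $\pm 2p_1(E)|_X\equiv 2w_2(E)^2\equiv 2(S\cdot S)\pmod 4$, where the sign is irrelevant modulo $4$. In the spin case, $X$ spin forces the relative intersection form to be even by the Wu formula, so $S\cdot S$ is even and this term vanishes, yielding $\mathrm{deg}\equiv d(X)$.

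For the second case $X$ need not be spin, so $W_2=W_1\cup X$ need not be either, and I would instead compare it to an honest spin filling $W_2'$ of $(Y_2,\mathfrak{s}_2)$. The discrepancy between computing $\text{gr}[\mathfrak{s}_2](\mathfrak{a}_2)$ through $W_2$ versus $W_2'$ is governed by the closed $SO(3)$-bundle $E_Z$ over the closed manifold $Z=(W_1\cup X)\cup_{Y_2}(-W_2')$, where Fr{\o}yshov's formula contributes exactly $2p_1(E_Z)\equiv 2w_2(E_Z)^2\pmod 4$. Because $\omega_1,\omega_2$ are mod $2$ null-homologous, the surface $S$ determines a well-defined relative mod $2$ class whose self-intersection equals $w_2(E)^2$ on $X$ and matches $w_2(E_Z)^2$ after closing up, so the extra term is precisely $2(S\cdot S)$ and $\mathrm{deg}\equiv d(X)+2(S\cdot S)\pmod 4$. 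Independence of the spin structures then follows from Fr{\o}yshov's relation (\ref{eq:gradings}): replacing $\mathfrak{s}_i$ shifts every grading on $I^\#(Y_i,\omega_i)$ by $2\langle \mathfrak{s}_i-\mathfrak{s}_i',[\omega_i]\rangle$, which vanishes since $[\omega_i]=0$ in $H_1(Y_i;\Z_2)$.

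The hard part will be the index-gluing step: one must identify the precise APS defect across $Y_1$ (the $\rho$-invariant and $h^0,h^1$ contributions of the asymptotic limit) and verify that it recombines with the half-integer normalization terms to produce $d(X)$ with the correct coefficient $\tfrac{1}{2}$ on $b_1(Y_2)-b_1(Y_1)$; naive Novikov additivity alone even gives the wrong sign on $\tfrac{3}{2}(\chi+\sigma)$, so the correction is doing essential work. A secondary technical point is to make the relative self-intersection $S\cdot S$ and the congruence $2p_1\equiv 2w_2^2\pmod 4$ rigorous in the cobordism setting, and to confirm that the spin hypothesis of the first case really forces $S\cdot S$ to be even, so that the two formulas agree whenever both hypotheses hold.
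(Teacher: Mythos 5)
Your approach---computing the degree as the difference of Fr{\o}yshov gradings across a glued filling $W_2=W_1\cup_{Y_1}X$, using index additivity together with Novikov additivity and Mayer--Vietoris for the $b_1,b_+$ terms, the congruence $p_1(E)\equiv w_2(E)^2 \pmod 4$ for the $2(S\cdot S)$ correction in the non-spin case, and relation (\ref{eq:gradings}) for independence of the spin structures---is precisely the argument the paper intends, which it compresses to ``a standard application of index additivity and some algebraic topology,'' deferring the second statement to [4.3.1] of \cite{scaduto}. The only wobble is in your spin case: when $\partial S\neq\emptyset$ and the $\omega_i$ are not null-homologous the quantity $S\cdot S$ is not defined, so the Wu-formula step cannot be run as stated; but it is also unnecessary, since index additivity over the spin filling (where the counted instanton has index zero over $X$) already absorbs the relative $p_1$ contribution, leaving exactly $d(X)$.
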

\vspace{.25cm}
\noindent  The proof is a standard application of index additivity and some algebraic topology. The second statement is \cite[4.3.1]{scaduto} and the first follows similarly. We record one more useful special case:\\

\begin{prop}
	The effect on the degree of blowing up the interior of a cobordism by $(\overline{\mathbb{C}\mathbb{P}}^2, F)$ where $F$ is a sphere of self-intersection $-1$ is addition by 2:
\[
\text{{\emph{deg}}}[\mathfrak{s}_1,\mathfrak{s}_2]\left(I^\#(X\#\overline{\mathbb{C}\mathbb{P}}^2,S\cup F) \right)  \equiv \text{{\emph{deg}}}[\mathfrak{s}_1,\mathfrak{s}_2]\left(I^\#(X,S) \right)  + 2\mod 4.
\] \label{prop:deg}
\end{prop}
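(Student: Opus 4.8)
The plan is to reduce to Proposition \ref{prop:deg1} by tracking how its two topological inputs, the integer $d(X)$ and the self-intersection $S\cdot S$, change under the blow-up, and then to promote the resulting degree shift to full generality by appealing to the locality of the modification. The computation is entirely homological bookkeeping; the one genuine issue is that blowing up destroys any spin structure, so I cannot simply quote the first (spin) case of Proposition \ref{prop:deg1}.

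First I would record the effect of the interior connected sum on the invariants in $d(X)$. Since the modification is supported away from $\partial X$, the boundaries $Y_1,Y_2$ and hence $b_1(Y_1),b_1(Y_2)$ are unchanged. Using $\chi(\overline{\mathbb{C}\mathbb{P}}^2)=3$ and $\chi(S^4)=2$ in the connected-sum formula gives $\chi(X\#\overline{\mathbb{C}\mathbb{P}}^2)=\chi(X)+1$, while additivity of the signature together with $\sigma(\overline{\mathbb{C}\mathbb{P}}^2)=-1$ gives $\sigma(X\#\overline{\mathbb{C}\mathbb{P}}^2)=\sigma(X)-1$. Thus $\chi+\sigma$ is unchanged, and as $d(X)$ depends only on $\chi+\sigma$ and on the boundary first Betti numbers, I conclude $d(X\#\overline{\mathbb{C}\mathbb{P}}^2)=d(X)$. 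Next, because $F$ lives in the $\overline{\mathbb{C}\mathbb{P}}^2$ summand it is disjoint from $S$, so
\[
(S\cup F)\cdot(S\cup F)=S\cdot S+2(S\cdot F)+F\cdot F=S\cdot S-1,
\]
using $S\cdot F=0$ and $F\cdot F=-1$. Feeding these into the second formula of Proposition \ref{prop:deg1} (valid when $\omega_1,\omega_2$ are mod $2$ nullhomologous) yields $d(X)+2(S\cdot S-1)\equiv\big(d(X)+2(S\cdot S)\big)+2\pmod 4$, which is exactly the asserted shift by $2$.

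The hard part is removing the nullhomology hypothesis, since $X\#\overline{\mathbb{C}\mathbb{P}}^2$ is never spin and so neither case of Proposition \ref{prop:deg1} literally applies in general. To handle this I would return to the index definition of the grading: the degree equals $-\text{ind}(d_A^\ast\oplus d_A^+)$ up to the $\chi$, $\sigma$, $b_1$ corrections already shown to be unchanged. Blowing up is an interior modification, so by excision for the ASD index the change in $\text{ind}(d_A^\ast\oplus d_A^+)$ is precisely the contribution of gluing in $(\overline{\mathbb{C}\mathbb{P}}^2,F)$ with $w_2(E)$ dual to $F$, a quantity computable on a closed model and independent of the boundary data $\omega_i$ and of $\mathfrak{s}_1,\mathfrak{s}_2$. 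Since $p_1(\mathfrak{g}_E)$ changes by $-1$ under this local operation, $\text{ind}$ changes by $\pm 2$, and hence the degree changes by $2\pmod 4$ in every case (the sign being immaterial modulo $4$). This excision/locality step is the main obstacle; once it is in place, the paragraph above supplies the rest.
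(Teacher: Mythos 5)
Your argument is correct and is essentially the paper's own (the paper offers no detailed proof, remarking only that the statement is ``a standard application of index additivity'' and that it ``more or less follows from Prop.~\ref{prop:deg1} by degree additivity''). Your computation that $\chi+\sigma$, and hence $d(X)$, is unchanged, together with the local index-excision argument showing the ASD index shifts by $2\Delta p_1\equiv \mp 2 \pmod 8$ independently of the boundary data, is precisely the intended fleshing-out of that remark.
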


\noindent Note that in general, the $[\mathfrak{s}_1,\mathfrak{s}_2]$-degree of a cobordism map $(X,S)$ is mod 4 congruent to $d(X) + \Delta$, where this is the defining equation for $\Delta =\Delta[\mathfrak{s}_1,\mathfrak{s}_2](X,S) \in 2\cdot \Z_4$. In general, both $d$ and $\Delta$ are additive with respect to the composition of cobordism data. Proposition \ref{prop:deg1} says that $\Delta\equiv 0$ (mod 4) when $X$ is spin, and $\Delta\equiv  2(S\cdot S)$ (mod 4) when the boundary bundles are trivial. Proposition \ref{prop:deg} says that $\Delta$ changes by $2$ when blowing up, with non-trivial $SO(3)$-bundle. In fact, Prop. \ref{prop:deg} more or less follows from Prop. \ref{prop:deg1} by degree additivity.

When $Y$ is replaced with its orientation reversal, one takes the dual of the instanton homology with the original grading reflected and shifted by $b_1(Y)$. Precisely, we have
\[
    \text{gr}_{-Y}[\mathfrak{s}] \equiv b_1(Y) - \text{gr}_{Y}[\mathfrak{s}] \text{ (mod 4)},
\]
where $\mathfrak{s}\in\text{Spin}(Y)$, and we give the same name to the related spin structure on $-Y$. We will not need this, but we note that since $\Sigma(L^\dagger)=-\Sigma(L)$, the framed instanton homology of $\Sigma(L^\dagger)$ with field coefficients is easily related by the above to that of $\Sigma(L)$.

Finally, we remark that our cobordisms $X$ above should technically be equipped with a path from the basepoint of $Y_1$ to the basepoint of $Y_2$. However, just as in \cite{scaduto}, in our applications there is always an implicit choice for such a path, and as such we keep them hidden. Also, although we are working only with $\F=\Z_2$ coefficients, we remark that to work with signs, one must choose some auxiliary data, such as a homology orientation for $X$.\\

\vspace{.55cm}

\noindent {\textbf{Gradings for branched covers.}} Now let $L$ be a link with diagram $D$. Fix a crossing of $D$, and for $i=0,1$ let $D_i$ be the diagram obtained by performing an $i$-smoothing at the crossing, as in Figure \ref{fig:mergesandsplits}. Since $D_i$ can be viewed as zero-surgery on a certain framed knot in $\Sigma(L^\dagger)$, we have cobordisms $V_{i}:\Sigma(D^\dagger)\longrightarrow \Sigma(D_i^\dagger)$ which are two-handle additions to $\Sigma(D^\dagger)\times [0,1]$. Now choose an orientation $o$ of $L$ and write $\mathfrak{s}$ for the spin structure of $\Sigma(L^\dagger)$ determined by $o$. (We will use the notation $o$ for both the orientation of $L$ and the induced orientation on its mirror $L^\dagger$.) Now, there is a preferred resolution amongst $D_0$ and $D_1$, the oriented resolution, see Figure \ref{fig:orientedmoves}. If $D_i$ is the oriented resolution, then it inherits an orientation from $D$, and we write $\mathfrak{s}_i$ for the associated spin structure.\\

\begin{prop}[\hspace{-.02cm}{\cite[Prop. 1]{liscaowens}}]
	If $D_i$ is an oriented resolution of $D$, then the cobordism {\emph{$V_i:\Sigma(D^\dagger)\longrightarrow \Sigma(D_i^\dagger)$}} has a spin structure restricting on the ends to $\mathfrak{s}$ and $\mathfrak{s}_i$. \label{prop:blowup} \\
\end{prop}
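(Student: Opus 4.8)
The plan is to realize $V_i$ as a branched double cover and to read off the required spin structure from an orientation of the branch surface. Concretely, I would view the passage from $D$ to its resolution $D_i$ as a single saddle move in $S^3$, producing a properly embedded surface cobordism $F\subset S^3\times[0,1]$ from $L$ to $L_i$ (and, after mirroring, from $L^\dagger$ to $L_i^\dagger$). Taking the double cover of $S^3\times[0,1]$ branched over $F$ gives a $4$-manifold whose two boundary components are $\Sigma(D^\dagger)$ and $\Sigma(D_i^\dagger)$; since the branched cover of a band (saddle) attachment on a surface is a two-handle attachment on the $3$-manifold, this branched cover is exactly the two-handle cobordism $V_i$. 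The decisive point is that $D_i$ is the \emph{oriented} resolution: the saddle band then respects the orientation $o$ of $L$, so $F$ is an \emph{orientable} cobordism whose induced boundary orientation is $o$ on one end and the inherited orientation $o_i$ on the other.

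Next I would construct the spin structure on $V_i$. The ambient manifold $S^3\times[0,1]$ is spin, and its spin structure pulls back to the unbranched part of the cover. The only obstruction is extending across the branch locus $\widetilde{F}\subset V_i$, a neighborhood of which is modeled fiberwise on the map $z\mapsto z^2$. Here orientability of $F$ enters: a meridian $m$ of $F$ bounds a normal disk and carries the bounding spin structure, and since $z\mapsto z^2$ extends over the meridian disk in the cover, the pulled-back spin structure on each meridian of $\widetilde{F}$ is again bounding; orientability ($w_1(F)=0$) together with the vanishing of the relevant normal-bundle term — forced because $F$ is properly embedded in a spin manifold with $H_2(S^3\times[0,1],\partial)\cong H^2(S^3\times[0,1])=0$ — guarantees these local extensions glue. (Equivalently, in the handle picture one checks that the attaching framing of the two-handle is even with respect to the spin structure, which is exactly the condition for a spin structure to extend over a two-handle cobordism.) This yields a spin structure on all of $V_i$.

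Finally I would identify the restrictions of this spin structure to the two ends with $\mathfrak{s}$ and $\mathfrak{s}_i$. This is where Turaev's bijection $\text{Spin}(\Sigma(L))\leftrightarrow\text{Or}(L)/\pm$ must be used in its geometric form: the spin structure Turaev assigns to a quasi-orientation is the one obtained from the orientation via branched-covering an orientable spanning/Seifert surface, so that an orientable cobordism of branch surfaces induces compatible spin structures on its two ends. Since the boundary orientations of $F$ are $o$ and $o_i$, the induced spin structures are $\mathfrak{s}=\mathfrak{s}(o)$ and $\mathfrak{s}_i=\mathfrak{s}(o_i)$, as required; the mirroring throughout only reverses orientations consistently and does not affect the argument.

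The main obstacle is this last step: carefully matching the abstractly constructed spin structure on the branched cover with Turaev's canonical assignment, i.e. establishing the naturality of Turaev's correspondence under orientable branch-surface cobordism. This compatibility is the real content of the statement, and it is precisely what the cited result of Lisca and Owens supplies; by comparison the handle-theoretic framing computation and the local branched-cover model are routine.
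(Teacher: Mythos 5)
The paper offers no proof of this proposition --- it is imported verbatim from Lisca--Owens --- so there is no internal argument to compare your attempt against. Your sketch (realizing $V_i$ as the double cover of $S^3\times[0,1]$ branched along the orientable saddle cobordism, deducing spinness from $H_2(S^3\times[0,1],\partial)=0$ and the orientability of the branch surface forced by the resolution being the oriented one, and matching the boundary spin structures via Turaev's correspondence) is a faithful outline of how the cited result is actually proved, and you correctly locate the one substantive step --- the naturality of Turaev's bijection under orientable branch-surface cobordisms --- though as written your argument is not self-contained, since it defers precisely that step back to the citation it is meant to replace.
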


\noindent More generally, let $u\in \{0,1\}^n$ be a vertex and $D_v$ its complete resolution diagram. We have a cobordism $V_u:\Sigma(D^\dagger)\longrightarrow \Sigma(D_u^\dagger)$ formed by the the addition of $|u|_1$-many 2-handles. If $D$ is oriented, there is a distinguished resolution $u_\ast\in\{0,1\}^n$, the {\emph{oriented resolution}}, and the associated resolution diagram $D_{\ast}$ inherits an orientation $o_\ast$ from $D$, and hence $\Sigma(D_{\ast}^\dagger)$ an associated spin structure $\mathfrak{s}_\ast$. From the proposition, the cobordism $V_{\ast}$ from $\Sigma(L^\dagger)$ to the branched cover of the oriented resolution is spin, bounding $\mathfrak{s}$ and $\mathfrak{s}_\ast$.

Now let $\omega$ be a two-fold marking for $L$, and suppose $L$ has $\ell$ components. The number (\ref{eq:gradings}) can be seen combinatorially as follows. The difference of two orientations $o_1$ and $o_2$ of $L$ can be viewed as a vector in $\{0,1\}^\ell$, by comparing orientations componentwise. In fact, the quantity
\begin{equation}
	\langle o_1 - o_2, \; \omega\rangle\; \equiv\; \sum_{i=1}^\ell (o_1^i -o^i_2)\cdot \omega(K_i) \mod 2, \label{eq:ordiff}
\end{equation}
where $o_1^i$ is the orientation of component $K_i\subset L$, is easily seen to depend only on the quasi-orientations determined by $o_1$ and $o_2$. This number is the same as (\ref{eq:gradings}) for the spin structures on $\Sigma(L^\dagger)$ determined by $\pm o_1$ and $\pm o_2$.\\
\vspace{.25cm}

\begin{figure}[t]
\centering
\includegraphics[scale=.60]{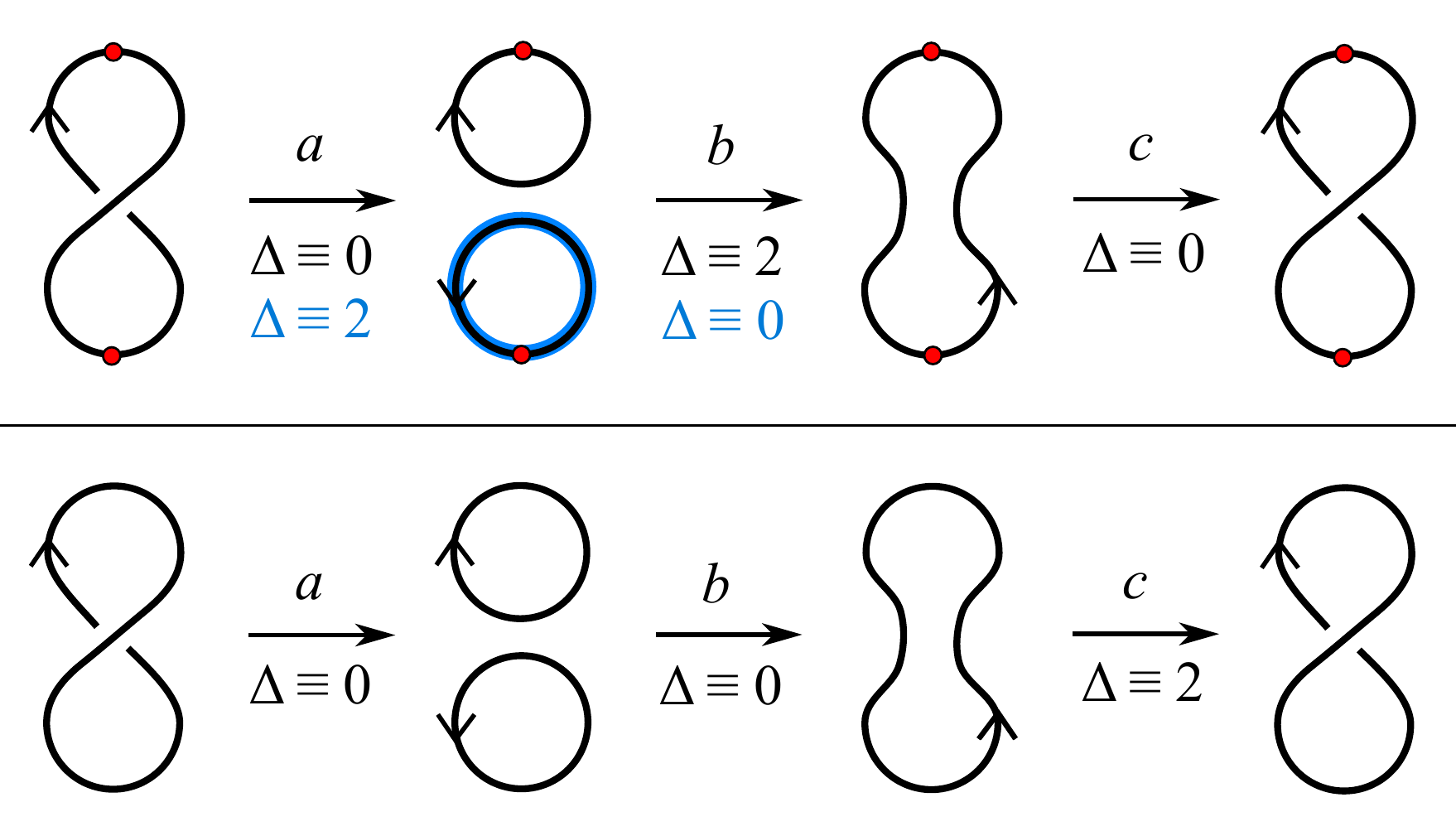}
\caption{In the top row, we fit the twisted unknot into an exact triangle in which the two-circle resolution has non-trivial marking data. On the bottom, the same exact triangle, but without dots. The mod 2 degrees of cobordism maps, and more specifically the $d(X)$ terms, are unchanged from the top row to the bottom row. We list in the figure the $\Delta \in 2\cdot \Z_4$ terms for each cobordism map. If we alter the quasi-orientation of the two-circle resolution in the top row, by changing the orientation of the bold (blue) circle, then the two surrounding $\Delta$ terms change by $2$ (mod 4).}\label{fig:unknottriangle}
\end{figure}

\begin{example}[The twisted unknot]\label{example:basic} {\emph{
\noindent Let $(D,\check{\omega})$ be an oriented two-fold marked diagram of the unkot with one twist and a dot on each arc, as in the top row of Figure \ref{fig:unknottriangle}. The $0$-resolution is the oriented resolution $u_\ast$, and as before we have the cobordism $V_0:\Sigma(D^\dagger)\longrightarrow\Sigma(D_0^\dagger)$. This map induces in framed instanton homology the map $a$ of Figure \ref{fig:unknottriangle}. We compute 
\[
	\text{deg}[\mathfrak{s},\mathfrak{s}_0](a) \equiv d(V_0) \equiv -\frac{3}{2}(1 + 0) +\frac{1}{2}(1-0) \equiv -1 \mod 4,
\]
where $S_0$ is a core for the attached 2-handle. The $\Delta$ term is $0$ (mod 4) by Prop. \ref{prop:deg1}. Note that the choice of $\mathfrak{s}$ is unique. Now, we denote by $\circlearrowleft$ the other quasi-orientation of $D_0$, obtained by reversing one of the orientations of the circles in $D_0$, as in Figure \ref{fig:unknottriangle}. We write $\mathfrak{s}_\circlearrowleft$\vspace{-.10cm} for the corresponding spin structure on $\Sigma(D^\dagger_0)$. Using (\ref{eq:ordiff}), we compute that when $\mathfrak{s}_i$ is replaced by $\mathfrak{s}_\circlearrowleft$, the degree (and more specifically, the $\Delta$ term) changes by $2$:
\[
	\text{deg}[\mathfrak{s},\mathfrak{s}_\circlearrowleft](a) \equiv -1 + 2 \equiv 1 \mod 4.
\]
The other $\Delta$ terms in the exact sequences are given in Figure \ref{fig:unknottriangle}, and follow from \cite{scaduto}. We note that in the exact triangle, for the triple composite cobordism, one always has $\Delta \equiv 2$ (mod 4) and $d \equiv 1$ (mod 4), giving the total degree of $-1$ (mod 4). Notice that an outcome of our discussion is the following fact. View $S^1\times S^2$ as the double branched cover of two unknots in the plane which are not concentric. Then the spin structure for $S^1\times S^2$ which spin bounds $D^2\times S^2$ is the one corresponding to the quasi-orientation in which the two circles have opposite planar orientations. (If the circles are concentric, the planar orientations should agree.)}}
\end{example}

\vspace{.25cm}

\begin{example}[The canonical quasi-orientation of a complete resolution]\label{example:canor} {\emph{
\noindent The previous example generalizes in the following way. We let $(D,\check{\omega})$ be a two-fold marked diagram. For any complete resolution $D_u$, we define the following:\\
}}
\end{example}

\begin{figure}[t]
\centering
\includegraphics[scale=.77]{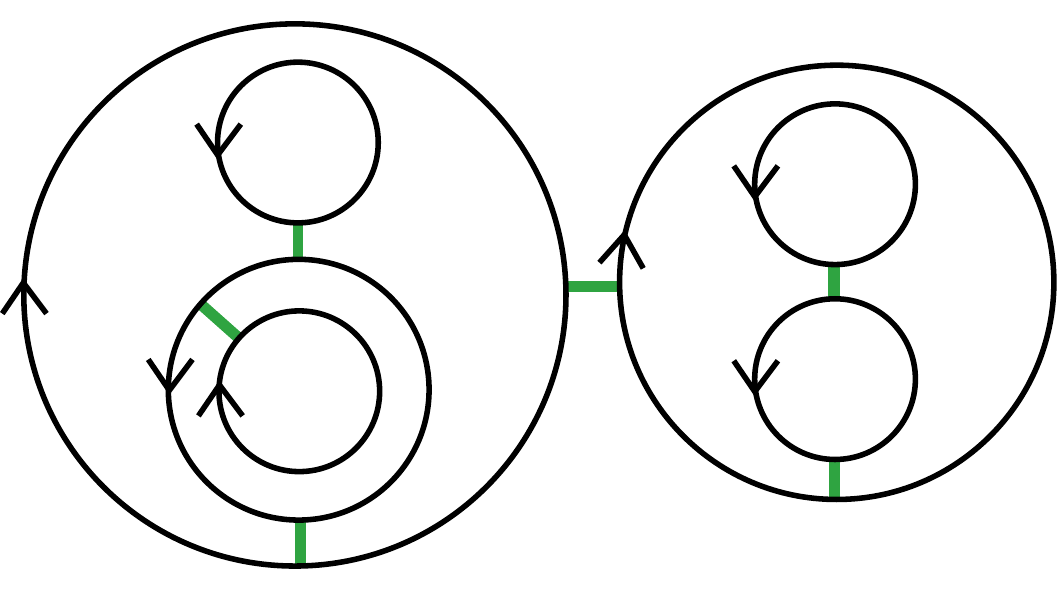}
\caption{An example of a complete resolution diagram with the canonical quasi-orientation. If two circles can be connected by an arc which does not intersect any other circles, then the orientations are chosen to agree with the right-hand side of Figure \ref{fig:orientedmoves}.}\label{fig:canonicalquasiorientation}
\end{figure}

\vspace{-.40cm}
\begin{defn}
The {\emph{canonical quasi-orientation of $D_u$}} is represented by one of two orientations that satisfy the following: if any two circles in $D_u$ are joined by an arc which does not interstect any other circles in the plane, then the orientations adhere to the local picture in the right of Figure \ref{fig:orientedmoves}. The canonical quasi-orientation will always be denoted by $\circlearrowleft$.\\
\end{defn}

\noindent An example of the canonical quasi-orientation is given in Figure \ref{fig:canonicalquasiorientation}. Now, suppose $(D_u,\check{\omega}_u)$ and $(D_w,\check{\omega}_w)$ are two complete resolutions of $(D,\check{\omega})$ where $u,w\in \{0,1\}^n$. We suppose $|u-w|_1=1$ and $u\leq w$, so that there is a single merge or split from $D_u$ to $D_w$. Let $V_{uw}:\Sigma(D_u^\dagger)\longrightarrow\Sigma(D_w^\dagger)$ be the relevant cobordism with bundle data $S_{uw}$. Then, regardless of whether any of the relevant circles merging or splitting support dots, we have
\[
	\Delta[\mathfrak{s}_{\circlearrowleft},\mathfrak{s}_{\circlearrowleft}](I^\#(V_{uw},S_{uw})) \equiv 0 \mod 4.
\]
Thus, when we use the canonical quasi-orientation on all resolutions, the $\Delta\in 2\cdot\Z_4$ terms of the merge and split cobordism maps are all zero.\\

\section{The mod 4 graded spectral sequence.}\label{sec:ss}

We now have the tools to establish the spectral sequence with mod 4 gradings. Suppose we have a two-fold marked diagram $(D,\check{\omega})$ with orientation $o$, yielding a distinguished oriented resolution diagram $(D_\ast,\check{\omega}_\ast)$ oriented by $o_\ast$. Then define $\theta = \theta(D,\check{\omega},o)\in \Z_2$ to be
\[
	\theta \;\equiv\; \langle o_\ast - \circlearrowleft, \check{\omega}_{u_\ast}\rangle \mod 2,
\]
in the sense of (\ref{eq:ordiff}), where, as before, $\circlearrowleft$ is the canonical quasi-orientation.\\

\vspace{.25cm}

\begin{theorem}\label{thm:ss}
	Let $L$ be a link with two-fold marking data $\omega$. Let $\pm o$ be a quasi-orientation for $L$, and write $\mathfrak{s}$ for the associated spin structure on $\Sigma(L)$. Let $(D,\check{\omega})$ be a dotted surgery diagram for $(L,\omega)$ oriented by $\pm o$. Then there is a spectral sequence with $E^2$-page $\text{{\emph{Hd}}}(D,\check{\omega})$ mod 4 graded by\footnote{This contains as a special case the grading in Theorem 1.1 of \cite{scaduto}. The grading there should be corrected by the addition of $2\sigma$ (mod 4), due to an error in Lemma 8.5, which should read $\mathcal{P}(\mathbb{X}_{\infty 1})\equiv n_-$ (mod 2).}
\begin{equation}
	 \delta^\# := 2h - \delta + 2\theta   \mod 4\label{eq:grinproof}
\end{equation}
that converges to {\emph{$I^\#(\Sigma(L^\dagger),\omega)$}} with mod 4 grading $\text{{\emph{gr}}}[\mathfrak{s}]$.\\
\end{theorem}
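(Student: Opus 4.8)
The plan is to take the mod 2 graded spectral sequence already produced by Proposition \ref{prop:ss} and promote its grading to mod 4 using the degree formalism of Section \ref{sec:gradings}. Recall that the relevant filtered complex has underlying group $\bigoplus_u I^\#(\Sigma(D_u^\dagger),\check{\omega}_u)$, that only the vertices $u$ with $\check{\omega}_u$ trivial survive, and that the face maps $I^\#(V_{uw},S_{uw})$ for edges $u\le w$ with $|w-u|_1=1$ assemble the differential. First I would equip each surviving summand $I^\#(\Sigma(D_u^\dagger))$ with the absolute mod 4 grading $\text{gr}[\mathfrak{s}_\circlearrowleft]$ coming from the canonical quasi-orientation of $D_u$. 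The point of this choice is Example \ref{example:canor}: with canonical quasi-orientations on every resolution, $\Delta[\mathfrak{s}_\circlearrowleft,\mathfrak{s}_\circlearrowleft](I^\#(V_{uw},S_{uw}))\equiv 0$, so each face map has mod 4 degree exactly $d(V_{uw})$, independent of the bundle data $S_{uw}$.

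Next I would compute these degrees. Each $V_{uw}$ is a single two-handle cobordism, so $\chi(V_{uw})=1$ and the relevant signature contribution vanishes; a split raises $b_1$ by one and a merge lowers it by one, so $d(V_{uw})\equiv -1$ mod 4 for splits and $\equiv -2$ for merges. Since these differ, the differential is not homogeneous for the naive grading, and I would repair this by adding to each summand a cube-level shift $\phi(u)$ defined by $\phi(w)-\phi(u)=-1-d(V_{uw})$, i.e. $\phi$ counting the merges along any monotone path to $u$; additivity of $d$ (Section \ref{sec:gradings}) makes $\phi$ well defined. The total grading $G=\text{gr}[\mathfrak{s}_\circlearrowleft]+\phi$ then makes the differential, and hence every $d_r$, homogeneous of degree $-1$, so that the grading descends to $E^\infty$ and to the limit. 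Comparing increments now finishes the combinatorial identification: $G$ changes by $-1$ across each edge and by $+1$ with each additional solid circle, exactly as does $2h-\delta$, the latter because the Khovanov differential raises $h$ by $1$ and preserves $q$. The matching of the within-summand increments with those of $\text{gr}[\mathfrak{s}_\circlearrowleft]$ on $I^\#(\#^m(S^1\times S^2))$ is the input I would import from \cite{scaduto}. Consequently $G$ agrees with $2h-\delta$ up to a single global constant.

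It remains to fix that constant so the limit grading is the absolute grading $\text{gr}[\mathfrak{s}]$ attached to $\pm o$. Here I would anchor everything at the oriented resolution $D_{u_\ast}$. By Proposition \ref{prop:blowup} the cobordism $V_\ast:\Sigma(L^\dagger)\to\Sigma(D_{u_\ast}^\dagger)$ is spin, bounding $\mathfrak{s}$ and the spin structure $\mathfrak{s}_\ast$ induced by the inherited orientation $o_\ast$, so by Proposition \ref{prop:deg1} the map $I^\#(V_\ast)$ has mod 4 degree $d(V_\ast)$ with $\Delta\equiv 0$. This identifies $\text{gr}[\mathfrak{s}]$ with $\text{gr}[\mathfrak{s}_\ast]$ up to the known shift $d(V_\ast)$. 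But the summand gradings above use the canonical quasi-orientation $\circlearrowleft$ rather than $o_\ast$, so I would convert between the two spin structures using (\ref{eq:gradings}) and (\ref{eq:ordiff}): they differ by $2\langle o_\ast-\circlearrowleft,\check{\omega}_{u_\ast}\rangle=2\theta$. Inserting this correction pins the constant and yields precisely $\delta^\#=2h-\delta+2\theta$ as the grading converging to $\text{gr}[\mathfrak{s}]$.

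The step I expect to be the main obstacle is this last normalization — correctly tracking the absolute grading through non-spin cobordism data. The tension is structural: homogeneity of the differential forces the canonical quasi-orientation on every resolution in order to kill the $\Delta$ terms, whereas the absolute grading is naturally anchored at the oriented resolution, whose spin structure $\mathfrak{s}_\ast$ is the one spin-cobordant to $\mathfrak{s}$. Reconciling these two conventions is exactly what generates the correction term $\theta$, and it is the genuinely new feature relative to \cite{scaduto}, where trivial bundles force $\theta=0$. A secondary point needing care is that $\theta$, and thus the whole grading, should be checked independent of the auxiliary choices — the pairing of dots, the representing surfaces $S_{uw}$, and the monotone paths defining $\phi$; independence of the surfaces is already guaranteed by the vanishing of $\Delta$ under canonical quasi-orientations, and independence of the paths by additivity of $d$.
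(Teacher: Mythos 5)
Your overall strategy is the same as the paper's: put the canonical quasi-orientation on every complete resolution so that, by Example \ref{example:canor}, the $\Delta$-terms of all face maps vanish; identify the resulting relative $\Z_4$-grading with $2h-\delta$; and pin the absolute normalization at the oriented resolution, where Proposition \ref{prop:blowup} supplies a spin cobordism $V_\ast$ and the comparison of $\mathfrak{s}_\ast$ with $\mathfrak{s}_\circlearrowleft$ via (\ref{eq:ordiff}) produces the term $2\theta$. The relative part of your argument (the shift $\phi$, its well-definedness from additivity of $d$, the matching of within-summand increments) is a sound reorganization of the paper's observation that $\Delta_u\equiv\Delta_{u_\ast}$ for all $u$ by additivity along edges.

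There is, however, a genuine gap in your anchoring step, and as written it changes the answer. The map relating the limit grading $\text{gr}[\mathfrak{s}]$ to the $E^1$-summand at a vertex $u$ is not the direct two-handle cobordism but the composite cube cobordism $m_u=I^\#(X_u,S_u)$, obtained by attaching $n$ two-handles to reach the all-zero resolution and then $|u|_1$ more; the $E^1$-grading of a generator of $C_u$ is $2(k-1)+i-|u|_1-\text{deg}[\mathfrak{s},\mathfrak{s}_u](m_u)$, which is the formula from \cite{scaduto} on which the whole normalization rests. At the oriented resolution, $(X_{u_\ast},S_{u_\ast})$ is $(V_\ast,S_\ast)$ blown up $|u_\ast|_1=n_-$ times by $(\overline{\mathbb{C}\mathbb{P}}^2,F)$. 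Blowing up leaves $d$ unchanged (it adds $1$ to $\chi$ and $-1$ to $\sigma$), but by Proposition \ref{prop:deg} --- which you never invoke --- each blowup shifts $\Delta$ by $2$, so $\Delta_{u_\ast}\equiv 2n_-+2\theta$, not $2\theta$. That extra $2n_-$ is exactly what converts the naive term $2|u|_1$ into $2h=2(|u|_1-n_-)$ modulo $4$; substituting $V_\ast$ for the composite, as you do, yields the grading $2h-\delta+2\theta+2n_-$, which disagrees with the stated formula whenever $n_-$ is odd. So the step you correctly flag as the main obstacle --- tracking the absolute grading from the limit into the filtered complex --- is where the argument fails: the anchoring must be carried out with the cobordism actually built into the spectral sequence, blowups included.
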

\vspace{.10cm}

\begin{proof} From Proposition \ref{prop:ss}, we need only compute the mod 4 gradings. Let $x=a_1\wedge \cdots \wedge a_i \in C_u$ be an element in the $E^1$-page. We assume $x\neq 0$, in which case $\check{\omega}_u$ must have associated $SO(3)$-bundle trivial. Thus all choices of spin structure $\mathfrak{s}_u$ on $\Sigma(D_u)$ yield the same mod 4 grading in $C_u \cong I^\#(\Sigma(D^\dagger_u))$. Following \cite[\S 6.4]{scaduto}, the $E^1$-page mod 4 grading of $x$ is given by
\begin{equation}
	2(k-1) + i - |u|_1 - \text{deg}[\mathfrak{s},\mathfrak{s}_u]\left( m_{u}\right),\label{eq:e1grad}
\end{equation}
where $k$ is the number of circles in the resolution $D_u$, and $m_{u}=I^\#(X_{ u},S_{u})$, where $(X_{u},S_{u})$ is a cobordism from $(\Sigma(D^\dagger),\check{\omega})$ to $(\Sigma(D^\dagger_u),\check{\omega}_u)$. The cobordism $X_{u}$ is formed as follows. Let $\vec{0}\in \{0,1\}^n$ be the resolution of norm $0$. Form the cobordism $V_{\vec{0}}$ as above from $\Sigma(L^\dagger)$ to $\Sigma(D_{\vec{0}}^\dagger)$ by attaching $n$ 2-handles to $\Sigma(L^\dagger)\times [0,1]$. Then we attach $|u|_1$ more 2-handles to obtain $X_u$, one for each 1-smoothing of $u$. Plainly $\chi(X_{u}) = |u|_1 + n$, and by \cite[Lemma 9.4]{bloom} (and additivity of signatures) we have $\sigma(X_{u}) = -n_- -\sigma$. This computes
\[
	d(X_{u}) = -\frac{3}{2}(n_+ - \sigma + |u|_1) + \frac{1}{2}( (k-1)-\nu).
\]
From this we then compute (\ref{eq:e1grad}) to be mod 4 congruent to
\[
	\Delta_{u} - \delta +2|u|_1,
\]
where $\delta$ is the grading defined in (\ref{eq:deltagrad}), and $\Delta_{ u}$ is the $\Delta$ term for the map $m_u$, i.e.
\[
    \Delta_u = \text{deg}[\mathfrak{s},\mathfrak{s}_u]\left( m_{u}\right)-\text{deg}(X_{u})\;\in\; 2\cdot \Z_4.
\]
Recalling $h=|u|_1-n_-$ and $|u_\ast|_1 = n_-$, the following will establish (\ref{eq:grinproof}) and complete the proof:
\[
	\Delta_{u} \equiv 2|u_\ast|_1+ 2\theta \mod 4.
\]
Up until now the spin structure choices $\mathfrak{s}_u$ did not matter, as we were working exclusively at a trivially marked diagram $D_u$. We will utilize the additivity of degrees, and in doing so must pass through resolutions with non-trivial two-fold marking data. We thus choose for every vertex $u\in \{0,1\}^n$ the spin structure $\mathfrak{s}_u = \mathfrak{s}_\circlearrowleft$ corresponding to the canonical quasi-orientation $\circlearrowleft$. Now consider the case in which $u$ is the oriented resolution vertex $u_\ast$. Then $(X_u,S_u)$ is none other than $(V_\ast, S_\ast)$ blown up $|u_\ast|_1$ many times by $(\overline{\mathbb{C}\mathbb{P}}^2, F)$, cf. \cite[Fig. 3.2]{scaduto}. We then compute
\[
	\Delta_{u_\ast} \equiv 2 |u_\ast|_1  + 2 \theta \mod 4,
\]
where the first term comes from Proposition \ref{prop:blowup}, and the second term is computed from Proposition \ref{prop:deg}, using that $V_\ast$ is a spin cobordism bounding $\mathfrak{s}$ and $\mathfrak{s}_\ast$, thereafter utilizing (\ref{eq:ordiff}) to compare $\mathfrak{s}_\circlearrowleft$ and $\mathfrak{s}_\ast$ against $\check{\omega}_\ast$. 
From Example \ref{example:canor}, the $\Delta$ term is $0$ (mod 4) for a merge or split when both diagrams are canonically quasi-oriented. Using additivity, a sequence of such merges and splits shows that $\Delta_u \equiv \Delta_{u_\ast}$ (mod 4), completing the proof.
\end{proof}
\vspace{.50cm}

\begin{figure}[t]
\centering
\includegraphics[scale=.23]{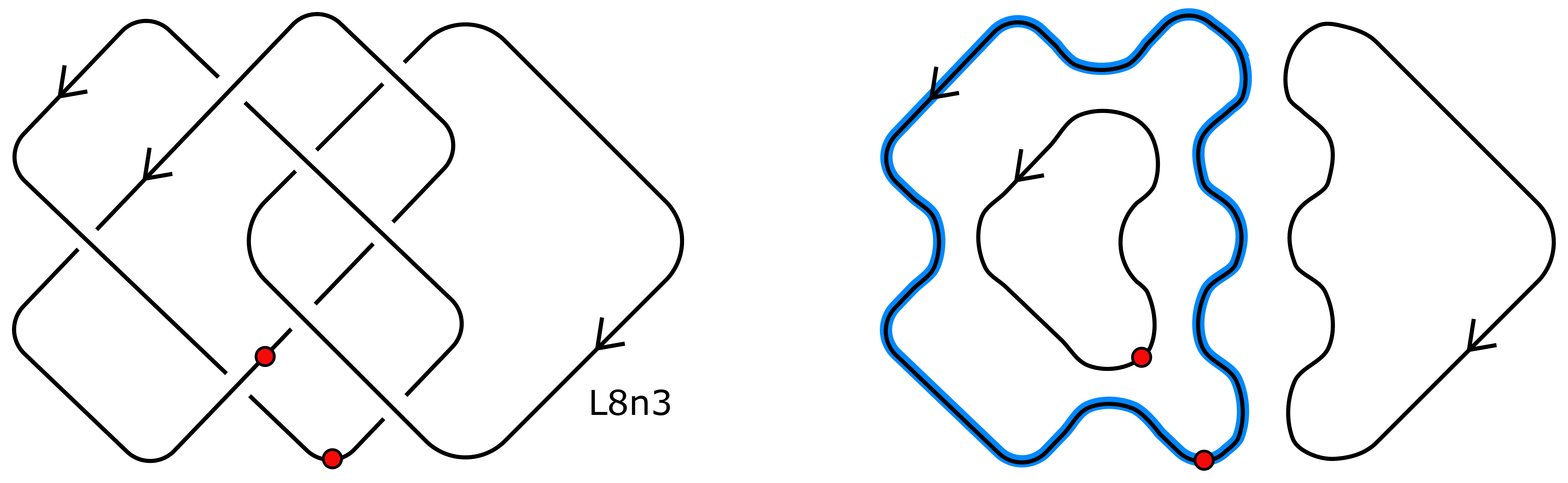}
\caption{We compute the term $\theta$ in the mod 4 grading for the two-fold marked diagram $(D,\check{\omega})$ of L8n3 depicted on the left. First we take the oriented resolution $(D_\ast,\check{\omega}_\ast)$, depicted on the right. If the circle in bold (blue) has its orientation reversed, we obtain the canonical quasi-orientation. There is a dot on this circle, so we compute $\theta \equiv 1$ (mod 2).}\label{fig:L8n3}
\end{figure}

\noindent When considering the isomorphism class of the framed instanton homology, the important part of the grading $\delta^\#$ is $2h-\delta$. This determines the relative $\Z_4$-grading, and the other term $2\theta$ only contributes an overall shift. In fact, when $\omega$ is non-trivial, all of our computations indicate that $\text{Hd}(D,\check{\omega})$ is 2-periodic with respect to the $\Z_4$-grading $\delta^\#$. The influence of the homological term $2h$ is understood when we use non-trivial bundles, as illustrated through the following simple examples.\\

\vspace{.25cm}

\noindent {\textbf{The Hopf link L2a1.}} For any two-fold marking $\check{\omega}$ of the alternating diagram $D$ for the Hopf link, $\text{Hd}(D,\check{\omega})$ computes $\text{Kh}(L,\omega)$ as well as $I^\#(\R\mathbb{P}^3,\omega)$. Since $L$ is alternating, $\text{Kh}(L,\omega)$ is rank 2 and in $\delta$-grading 0. When the marking $\check{\omega}$ is trivial, and we use the grading $2h-\delta$ (mod 4), $\text{Hd}(D,\check{\omega})$ is rank 2 and supported in grading 0 (mod 4), which computes $I^\#(\R\mathbb{P}^3)$. If the marking $\check{\omega}$ represents a non-trivial bundle, then $2h$ influences the result more: we compute $I^\#(\R\mathbb{P}^3,\omega)$ to be rank 1 in grading 0 (mod 4) and rank 1 in grading 2 (mod 4). The 2-periodicity allows us to forget $\theta$, if we only care about rank inequalities. However, we note that if $\check{\omega}$ is non-trivial but represents the trivial bundle (2 dots on one component), the $2h-\delta$ (mod 4) graded $\text{Hd}(D,\check{\omega})$ is rank 2 in grading 2 (mod 4). Here the term $2\theta\equiv 2$ (mod 4), and shifts the answer to what it should be.\\

\vspace{.25cm}

\noindent {\textbf{The 2-bridge link L4a1.}} Similarly, the alternating diagram for L4a1 with trivial marking data computes $\text{Kh}(L,\omega)$ to be rank 4 and supported in $\delta$-grading 0. With the grading $2h-\delta$ (mod 4), the dotted diagram homology computes $I^\#(L(4,1))$ to be of rank 4, supported in grading 2 (mod 4). When $\check{\omega}$ represents a non-trivial bundle, $2h$ alters the gradings to compute that $I^\#(L(4,1),\omega)$ is of rank 2 in gradings 0 (mod 4) and 2 (mod 4). This latter case can be seen in Figure \ref{fig:L4a1}.\\

\vspace{.25cm}

\noindent {\textbf{The non-alternating link L6n1.}} The first two examples above are of course both implied by Theorem \ref{thm:tqa}, as is the two-fold marked L6n1 of Figure \ref{fig:L6n1comp}, which is TQA. Write $(L,\omega)$ for this example. The dotted diagram homology computes $I^\#(\Sigma(L),\omega)$ to be of rank 4. The $\delta$-grading is zero for all generators. The bold (green) homology in Figure \ref{fig:L6n1comp} shows that the generators pair off into different columns ($h$-gradings). Using the $2h-\delta$ (mod 4) grading, this gives that $I^\#(\Sigma(L),\omega)$ is of rank 2 in both gradings 0 (mod 4) and 2 (mod 4).\\

\vspace{.25cm}

\noindent {\textbf{The non-alternating link L8n3.}} This example does not follow from Theorem \ref{thm:tqa}. We have $\det(\text{L8n3})=4$. With our conventions, the signature of the oriented L8n3 represented by the two-fold marked diagram $(D,\check{\omega})$ in Figure \ref{fig:L8n3} is $-6$. With $\F = \Z_2$ as usual, the $2h-\delta$ (mod 4) graded dotted diagram homology $\text{Hd}(D,\check{\omega})$ is computed to be
\begin{equation}
    \F_{(0)}^3 \oplus \F_{(1)} \oplus \F_{(2)}^3 \oplus \F_{(3)}.\label{eq:agroup}
\end{equation}
We then compute in Figure \ref{fig:L8n3} that $\theta\equiv 1$ (mod 2). Thus the gradings of generators in the expression (\ref{eq:agroup}) are shifted by 2 (mod 4). Once again, the shift by $2\theta$ has no effect on the $\Z_4$-graded isomorphism class of the group.\\

\vspace{.0cm}

\section{Framed instanton homology for TQA marked links}\label{sec:tqa}

In this section we prove Theorem \ref{thm:tqa}. We begin with two facts about framed instanton homology. First, the euler characteristic is given by $\chi(I^\#(Y,\omega)) = |H_1(Y;\Z)|$, see \cite{scaduto}. Second, $I^\#(Y,\omega)$ vanishes whenever there is a 2-sphere in $Y$ that transversely intersects $\omega$ in an odd number of points, i.e. the $SO(3)$-bundle associated to $\omega$ is non-trivial on an embedded 2-sphere. This latter fact holds on the chain level, and comes down to the fact that a 2-sphere supports no flat connections on a non-trivial $SO(3)$-bundle. For double branched covers, we thus have $\chi(I^\#(\Sigma(L),\omega))=\det(L)$, and that $I^\#(\Sigma(L),\omega)$ vanishes whenever $(L,\omega)$ splits into two odd-marked links.\\

\begin{proof}[Proof of Theorem \ref{thm:tqa}]
Let $(L,\omega)$ be a TQA two-fold marked link. Let the resolution two-fold marked links $(L_0,\omega_0)$ and $(L_1,\omega_1)$ be as in Definition \ref{defn:tqa}. By symmetry, the loop $\mu$ in the exact triangle (\ref{eq:ses}) can be placed in any one of the instanton groups. We ensure that $\mu$ is always mod 2 nullhomologous as follows: if in the relevant crossing of $L$ the two strands belong to one component, add $\mu$ to $(L,\omega)$; otherwise add $\mu$ to either $(L_0,\omega_0)$ or $(L_1,\omega_1)$. Thus up to isomorphism the exact triangle (\ref{eq:ses}) can be presented in the following form, analogous to (\ref{eq:skeintriangle}):\\
\begin{equation*}
\begin{tikzcd}
                \cdots \;\;I^\#(\Sigma(L^\dagger),\omega) \arrow{r} &  I^\#(\Sigma(L^\dagger_0),\omega_0) \arrow{r}     &  I^\#(\Sigma(L^\dagger_1),\omega_1)              \arrow{r}                 &  I^\#(\Sigma(L^\dagger),\omega)\;\; \cdots 
\end{tikzcd}
\end{equation*}
\vspace{.15cm}

\noindent Now, if no three of these two-fold marked links are split into odd marked links, we can induct on determinants, using the above-mentioned facts, to establish that $I^\#(\Sigma(L),\omega)$ is $\Z_2$-graded isomorphic to a free abelian group of rank $\det(L)$. The induction can still be carried through in general, with a few modifications, as follows. 

Suppose first that $\det(L)=0$. Then $L_0$ and $L_1$ also have determinant zero. By resolving $(L_0,\omega_0)$ and $(L_1,\omega_1)$ at TQA crossings, and continuing, we obtain a tree. We can choose the resolutions so that the tree is finite, and the leaves of this tree are two-fold marked links that are split into odd-marked links. All such marked links have zero instanton homology, and using the exact triangle, we deduce the same for $L$. 

Now assume that $\det(L)> 0$. Suppose first that one of $L_0$ or $L_1$ has determinant zero. Let $(L_+,\omega_+)$ be the two-fold marked link amongst $(L_0,\omega_0)$ and $(L_1,\omega_1)$ that has positive determinant. We choose a TQA resolution of $(L_+,\omega_+)$, and if again there is a determinant zero link involved, we again choose the positive determinant marked link, and so forth. We can assume this process terminates, in the sense that eventually we come to a resolution in which either (i) the two resolved links $L'_0$ and $L'_1$ both have positive determinant, or (ii) one of $L'_0$ or $L'_1$ is split odd-marked and the other is an unknot. Case (ii) occurs only if $\det(L)=1$. If we can argue that the instanton groups for $(L,\omega)$ and $(L_+,\omega_+)$ are $\Z_4$-graded isomorphic, then upon passing through the aforementioned sequence of resolutions, we can reduce to proving the induction step for the cases (i) and (ii). The result for case (ii) would follow immediately. 

It is clear upon applying the exact triangle that $I^\#(\Sigma(L^\dagger),\omega)$ and $I^\#(\Sigma(L_+^\dagger),\omega_+)$ are $\Z_2$-graded isomorphic. However, we want a $\Z_4$-graded isomorphism. For this purpose we first establish:\\

\vspace{.25cm}

\noindent \textbf{Claim:} {\emph{In the above situation, the marking $\omega$ is trivial if and only if $\omega_+$ is trivial}}.\\

\begin{proof} If the two strands at the resolution crossing of $L$ belong to the same component, then this is easy to see. If the two strands belong to different components $K$ and $K'$ of $L$, we argue as follows. Write $(L_-,\omega_-)$ for the resolution other than $(L_+,\omega_+)$, for which $\det(L_-)=0$. If $\omega$ is trivial, then clearly so are $\omega_+$ and $\omega_-$. Now, for contradiction, suppose that $\omega$ is non-trivial but $\omega_+$ is trivial. Then there must be no dots on components of $L$ other than $K$ and $K'$, and one dot on each of $K$ and $K'$. From this it is clear that both $\omega_+$ and $\omega_-$ are trivial. As the theorem is already proved for determinant zero TQA marked links, we know that $\kh(L_-,\omega_-)=\overline{\kh}(L_-;\F)=0$. But reduced Khovanov homology has graded euler characteristic the Jones polynomial $J_L(t)$, and $J_L(1)=2^{|L|-1}$ implies $\overline{\kh}(L_-;\F)\neq 0$. This provides a contradiction, proving the claim.
\end{proof}

\vspace{.35cm}

\noindent Now we return to establishing the $\Z_4$-graded isomorphism between $I^\#(\Sigma(L^\dagger),\omega)$ and $I^\#(\Sigma(L_+^\dagger),\omega_+)$. If $\omega$ is non-trivial, the $\Z_2$-graded isomorphism suffices, because we claim, in the end, that its isomorphism type is 2-periodic with respect to the $\Z_4$-grading. When $\omega$ is trivial, the relevant map induced by a cobordism $(V_+,S_+)$ between the two groups has as its model the map $c$ in the top row of Figure \ref{fig:unknottriangle}. Thus $\Delta\equiv 0$ (mod 4). On the other hand $d(V_+)\equiv 0$, cf. \cite[\S 42.3]{thebook}. Thus the degree of the isomorphism between the two groups is $0$ (mod 4), whence they are $\Z_4$-graded isomorphic.

Now we can assume that we are in case (i) above, i.e., that all three links $L,L_0$ and $L_1$ have positive determinant. We have a few cases to consider, depending on the nature of the resolutions $(L_0,\omega_0)$ and $(L_1,\omega_1)$. To make sense of absolute mod 4 gradings, quasi-orient $L$ by  $\pm o$, and let $\mathfrak{s}$ denote the associated spin structure on $\Sigma(L^\dagger)$. Then $I^\#(\Sigma(L^\dagger),\omega)$ has the absolute $\Z_4$-grading $\text{gr}[\mathfrak{s}]$. We proceed to check the induction step in the following cases: \\

\vspace{.15cm}

\noindent {\emph{(1) Both $\omega_0$ and $\omega_1$ are non-trivial, and so too is $\omega$.}} By the induction hypothesis on determinants, we have an exact sequence of $\Z_4$-graded abelian groups:\\
\begin{equation*}
\begin{tikzcd}
               0 \arrow{r} & \Z_{(0)}^{\frac{1}{2}\det(L_1)}\oplus\Z_{(2)}^{\frac{1}{2}\det(L_1)} \arrow{r}     &  I^\#(\Sigma(L^\dagger),\omega)     \arrow{r}      &  \Z_{(0)}^{\frac{1}{2}\det(L_0)}\oplus \Z_{(2)}^{\frac{1}{2}\det(L_0)} \arrow{r} & 0
\end{tikzcd}
\end{equation*}
\vspace{.15cm}

\noindent The degrees of the two non-trivial maps are even. Regardless of their actual mod 4 degrees, this implies the induction step for $(L,\omega)$ in this case.\\

\vspace{.15cm}

\noindent {\emph{(2) Both $\omega_0$ and $\omega_1$ are trivial, but $\omega$ is non-trivial.}} In this case, the two strands of the crossing being resolved must belong to distinct link components of $L$, say $K_1$ and $K_2$, and $\omega$ must be the two-fold marking data which corresponds to a dot on each of these components (and otherwise no dots). Choose the orientation $o$ of $L$ so that $L_1$ is the oriented resolution. Note in our situation that $|L_0|=|L_1|=|L|-1$. Then, setting $\varepsilon_j  = (-1)^{j/2}2^{|L|-2}$, we have\\
\begin{equation}
\begin{tikzcd}
               0 \arrow{r} & \displaystyle{\bigoplus_{j=0,2}}\Z_{(j)}^{\frac{1}{2}\left(\det(L_1)+\varepsilon_j \right)}\arrow{r}{a}    &  I^\#(\Sigma(L^\dagger),\omega)     \arrow{r}{b}      & \displaystyle{\bigoplus_{j=0,2}}\Z_{(j)}^{\frac{1}{2}\left(\det(L_0)+\varepsilon_j \right)} \arrow{r} & 0\label{eq:trivses}
\end{tikzcd}
\end{equation}
\vspace{.15cm}

\noindent by our induction hypothesis on determinants. By our choice of $o$, we have $\text{deg}[\mathfrak{s}](a)\equiv 0$. Let $o'$ be obtained from $o$ by reversing the orientation of $K_1$, and write $\mathfrak{s}'$ for the associated spin structure on $\Sigma(L)$. Then $L_0$ is the oriented resolution for $o'$, and thus $\text{deg}[\mathfrak{s'}](b)\equiv 0$. Since $\omega(K_1)\equiv\omega(K_2)\equiv 1$ and $o$ and $o'$ differ on $K_1$, (\ref{eq:ordiff}) computes $\text{deg}[\mathfrak{s}](b)\equiv 2$. Now, using $\varepsilon_0+\varepsilon_2 = 0$ and the assumption that $\det(L_0)+\det(L_1)=\det(L)$, the induction step for this case follows.\\

\vspace{.15cm}

\noindent {\emph{(3) Only one of $\omega_0$ and $\omega_1$ is non-trivial, and $\omega$ is trivial.}} In this case, the two strands of the crossing must belong to the same component of $L$, say $K_1$. Suppose $\omega_0$ is non-trivial; the other case is similar. Set $\varepsilon_j = (-1)^{j/2}2^{|L_0|-1}$. We then have by our induction hypothesis:\\
\begin{equation*}
\begin{tikzcd}
               0 \arrow{r} & \displaystyle{\bigoplus_{j=0,2}}\Z_{(j)}^{\frac{1}{2}\left(\det(L_1)+\varepsilon_j \right)}\arrow{r}{a}   &  I^\#(\Sigma(L^\dagger),\omega)     \arrow{r}{b}      & \Z_{(0)}^{\frac{1}{2}\det(L_0)}\oplus \Z_{(2)}^{\frac{1}{2}\det(L_0)}  \arrow{r} & 0
\end{tikzcd}
\end{equation*}
\vspace{.15cm}

\noindent We compute $\text{deg}[\mathfrak{s}](a)\equiv 0$ (which is independent of $\pm o$ and $\mathfrak{s}$, as the bundles involved are trivial). Knowing that the degree of $b$ is even, the additivity of determinants, and the observation that in this situation $|L|=|L_0|$, the induction step follows.\\

\vspace{.15cm}

\noindent {\emph{(4) Both of $\omega_0$ and $\omega_1$ are trivial, and so too is $\omega$.}} In this case we again have the exact sequence (\ref{eq:trivses}), except that the middle group has $\omega$ trivial. We know the degrees of the two maps $a$ and $b$ are even. First suppose that the two strands crossing being resolved belong to distinct components of $L$. Then the reasoning in case (2) implies that the degrees of $a$ and $b$ are both 0 (mod 4). The additivity of the determinants leaves us to verify that
\[
	\varepsilon_j(L) = \varepsilon_j(L_0) + \varepsilon_j(L_1), \qquad j\in \{0,2\}
\]
where $\varepsilon_j(L) = (-1)^{j/2}2^{|L|-1}$. Under our assumptions we have $|L|-1=|L_1|=|L_0|$, so this is easily verified. Now suppose instead that the two strands of the crossing belong to the same component of $L$. Orient $L$. Suppose that $L_0$ is the oriented resolution; the other case is similar. Then we have $|L| = |L_0| - 1= |L_1|$. Also, the map $a$ is degree 0 (mod 4). We argue that the degree of $b$ is 2 (mod 4). Let $c$ be the third map in the exact triangle besides $a$ and $b$. This map is the zero map, but its degree is still defined. Then by \cite[\S 4.6]{scaduto} we have
\[
    \text{deg}(a) + \text{deg}(b) + \text{deg}(c) \equiv -1 \text{\;(mod 4)}.
\]
It suffices to show that $\text{deg}(c)\equiv 1$ (mod 4). By changing the orientation of $L$ as in case (2), using Prop. \ref{prop:deg1} we see that the map $c$ actually has degree given by $d(V_{10})$ where $V_{10}:\Sigma(L^\dagger_1)\to \Sigma(L^\dagger_0)$ is the relevant cobordism. The rest follows from the mod 2 gradings, which we know, and \cite[\S 42.3]{thebook}. In particular, the signature of $V_{10}$ is +1, and we directly compute $d(V_{10})\equiv 1$ (mod 4). Finally, with the gradings of $a$ and $b$ understood, the additivity of determinants leaves us to show
\[
	\varepsilon_j(L) = \varepsilon_j(L_0) + \varepsilon_{j+2}(L_1), \qquad j\in \{0,2\}.
\]
This is easily verified, using $|L| = |L_0| - 1= |L_1|$.\\

\vspace{.20cm}

\noindent It is straightforward to verify that one cannot have only two of $\omega,\omega_0,\omega_1$ non-trivial. Thus the above four cases exhaust all possibilities, and the proof is complete.
\end{proof}

\vspace{.50cm}

\section{TQA Montesinos knots}\label{sec:more}

Here we present a list of TQA Montesinos knots, many of which are not QA. We adapt the work of Champarnerkar and Ording \cite[Prop. 5.1]{champanerkarording} from the QA case, and we follow their conventions and notation. We write $L=M(e;t_1,\dots,t_p)$ for a Montesinos link, where $0\neq t_i=\alpha_i/\beta_i\in \Q$, $\alpha_i,\beta_i\in \Z$ are relatively prime, $\beta_i >0$, and $e\in \Z$.  Let $\{t\}=t-\lfloor t\rfloor$ denote the fractional part of $t$. Define
\[\
    \varepsilon = e+\sum \left\lfloor\frac{1}{t_i}\right\rfloor, \;\;\; \hat{t}=\left\{\frac{1}{t}\right\}^{-1},\;\;\; t^f=\frac{\alpha}{\beta-\alpha} \; \text{ if } t>0,\;\;\; t^f=\frac{\alpha}{\beta+\alpha},\; \text{ if } t<0.
\]

\begin{prop}\label{prop:monte}
Let $L=M(e;t_1,\dots,t_p)$ be a Montesinos knot.  Then $L$ is TQA if any one of the three following conditions holds:
\begin{itemize}
\item $\varepsilon>-1$\;\; or \;\;$\varepsilon<1-p$.
\item $\varepsilon=-1$\; and\; $|\hat{t_i}^f| \geq \hat{t_j}$ \; for some $i\neq j$.  
\item $\varepsilon=1-p$\; and\; $|\hat{t_i}^f| \leq \hat{t_j}$ \; for some $i\neq j$.\\  
\end{itemize}
\end{prop}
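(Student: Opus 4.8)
The plan is to adapt the inductive argument of Champarnerkar and Ording \cite[Prop. 5.1]{champanerkarording} for quasi-alternating Montesinos links, exploiting the extra flexibility provided by the split-odd-marked base case (2) of Definition \ref{defn:tqa}. First I would pass to a reduced normal form: absorbing the integer parts $\lfloor 1/t_i\rfloor$ of the continued-fraction data into the framing produces the effective twist $\varepsilon = e + \sum \lfloor 1/t_i\rfloor$ and the normalized tangle slopes $\hat{t}_i = \{1/t_i\}^{-1} > 1$. In this form the value of $\varepsilon$ governs the combinatorics, and the three regimes $\varepsilon > -1$, $\varepsilon < 1-p$, and the two critical endpoints $\varepsilon = -1,\, 1-p$ behave differently: in the first bullet the reduced diagram is alternating, so $L$ is already QA (indeed alternating) and a fortiori TQA, whereas at the two endpoints the diagram is genuinely non-alternating and the finer analysis encoded by the tangle inequalities is required.

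The engine of the induction is clause (3) of Definition \ref{defn:tqa}. At a carefully chosen crossing inside one of the rational tangles, the two smoothings yield simpler two-fold marked diagrams $(D_0,\check{\omega}_0)$ and $(D_1,\check{\omega}_1)$, and I would track the transformation of the Montesinos parameters using the rational-tangle calculus. Smoothing crossings inside a tangle iterates two basic operations on its slope --- flattening $t \mapsto t^f$ (removing a single twist, so that $1/t^f = 1/t - 1$) and Euclidean reduction $t \mapsto \hat{t}$ (collapsing the integer part of $1/t$) --- and these are exactly the operations assembled into the quantities $\hat{t_j}$ and $\hat{t_i}^f$ of the hypotheses. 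One then verifies determinant additivity $\det(L) = \det(L_0) + \det(L_1)$ from the product formula for determinants of Montesinos links, which in the normalized form reduces to an elementary identity among the $\beta_i$ and the slope sum. The induction runs on the total tangle complexity (for instance $\sum_i \beta_i$), the base cases being the unknot and the alternating Montesinos links.

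The genuinely new input, relative to the QA argument, is the freedom in choosing the compatible marking $\check{\omega}$. Although $L$ is a knot and $\omega$ is trivial, the compatibility condition only forces an even number of dots on each component, so placing dots on the two arcs flanking the chosen crossing arranges that one of the two smoothings splits into two odd-marked links. By base case (2) that smoothing lies in $\mathcal{Q}_2$ immediately, and --- as in the remark following Definition \ref{defn:tqa} --- determinant additivity need not even be checked for it. This is precisely the mechanism used for the Kanenobu knots of Figure \ref{fig:kanenobu}, and it is what enlarges the admissible parameter range beyond QA. The endpoints $\varepsilon = -1$ and $\varepsilon = 1-p$ are where this dot placement must be paired with the inequalities $|\hat{t_i}^f| \geq \hat{t_j}$ (respectively $|\hat{t_i}^f| \leq \hat{t_j}$) to guarantee that the surviving resolution is a smaller Montesinos link still satisfying the hypotheses, so that the induction closes.

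I expect the main obstacle to be bookkeeping rather than any conceptual difficulty: one must correctly propagate the data $(e;t_1,\dots,t_p)$ through the continued-fraction and flype operations, simultaneously keeping track of the induced two-fold marking, and verify in each of the three $\varepsilon$-regimes both that determinant additivity holds where it is needed and that the boundary inequalities produce a valid split-odd-marked or smaller-Montesinos resolution. Since the mathematical content mirrors \cite[Prop. 5.1]{champanerkarording}, the real work is to re-examine each of their resolution steps in the dotted setting and observe that wherever their proof invokes a QA base case, ours may instead invoke the split-odd-marked clause, which is exactly what widens the admissible set.
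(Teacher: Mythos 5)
Your proposal follows essentially the same route as the paper: it reduces to the case $\varepsilon=-1$ (with $\varepsilon=1-p$ the mirror), runs the Champanerkar--Ording induction on the rational tangles (via the TQA version of their tangle-extension theorem and determinant additivity), and obtains the non-strict inequalities from clause (2) of Definition \ref{defn:tqa}. The one imprecision is where clause (2) actually enters: in the paper it is invoked only at the base case $M(0;r_1,-s)$ with $s=r_1$, an unlink which must be given \emph{non-trivial} marking data, rather than at every resolution step --- the two smoothings in the inductive step are a smaller, possibly non-trivially marked multi-component Montesinos link and a QA connected sum of rational links, so the flexibility you need is in the markings of the intermediate links, not merely in a choice of $\check{\omega}$ compatible with the trivial marking on the knot.
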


\vspace{.25cm}

\noindent Qazaqzeh, Chbili, and Qublan \cite{qazaqzehchbiliqublan} show that a Montesinos link is QA if one of these conditions holds, with {\emph{strict inequality}} in the two latter conditions.  Further, they conjecture that all QA Montesinos links satisfy one of these conditions (with strict inequality).\\

\begin{proof}[Proof of Prop. \ref{prop:monte}]
The proof is based on the following proposition, which is an adaptation of Proposition 5.1 from \cite{champanerkarording} for QA links to our setting of TQA two-fold marked links. (Again, the difference is that our inequality is not strict.)\\

\begin{prop}\label{prop:premont1}
Let $s,r_i$ be positive rational numbers for $i=1,\dots,n$ such that $s\geq \min\{ r_i\}$.  Then there exists a choice of two-fold marking data $\omega$ on the Montesinos link $L=M(0;r_1,\dots,r_n,-s)$ so that $(L,\omega)$ is TQA.  In particular, if $L$ is a knot, then $L$ is TQA.\\
\end{prop}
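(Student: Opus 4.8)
The plan is to adapt the inductive argument of Champanerkar and Ording \cite[Prop. 5.1]{champanerkarording} essentially line for line, the sole new ingredient being the splitting clause (2) of Definition \ref{defn:tqa}, which I would invoke exactly at the boundary of the inequality $s\geq \min_i r_i$ where the strict-inequality argument for the QA case breaks down. I would present $L=M(0;r_1,\dots,r_n,-s)$ as the circular closure of the rational tangles $T(r_1),\dots,T(r_n),T(-s)$ and induct on a complexity measure, such as the total number of crossings in the reduced alternating tangle diagrams, equivalently the sum of the numerators of the $r_i$ together with that of $s$; the base case is the unknot, which lies in $\mathcal{Q}_2$ by clause (1). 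Throughout the induction I carry diagrammatic two-fold marking data $\check{\omega}$, to be specified below, and at each stage resolve a single crossing to produce $(L_0,\omega_0)$ and $(L_1,\omega_1)$, with $\omega_0,\omega_1$ read off from $\check{\omega}$ as in Section \ref{sec:twisted}.

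For the reduction step, I would locate the tangle $T(r_j)$ realizing the minimum and, after a cyclic reordering, place it adjacent to $T(-s)$, then resolve a crossing in the outermost twist region of $T(r_j)$ or of $T(-s)$. By the tangle-fraction calculus, one smoothing shortens the continued-fraction expansion of the corresponding parameter, lowering its numerator, and yields a Montesinos link again of the form $M(0;\dots,-s')$ with $s'\geq\min_i r_i'$; by the induction hypothesis this is TQA with its inherited marking. The other smoothing either deletes a tangle, reducing $n$ to a smaller link still satisfying the hypothesis, or, precisely when $s=\min_i r_i$, reduces the two adjacent tangles to the numerator closure of $T(r)+T(-r)$, a split two-component unlink. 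The required additivity $\det(L)=\det(L_0)+\det(L_1)$ would be verified from the standard determinant formula for Montesinos links in terms of the tangle fractions, a routine computation parallel to the QA case.

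The main obstacle, and the only genuine departure from \cite{champanerkarording}, is this boundary case $s=\min_i r_i$: here the offending smoothing is the two-component unlink, which has determinant $0$ and is not QA, which is exactly why \cite{champanerkarording} must assume strict inequality. The fix is to choose $\check{\omega}$ so that this unlink appears with each of its two components carrying an odd number of dots; then that smoothing splits into two odd-marked links and lies in $\mathcal{Q}_2$ by clause (2), with no determinant check required, as remarked after Definition \ref{defn:tqa}. The crux is therefore combinatorial bookkeeping: I must place the dots on the arcs of $D$ so that (i) at every critical resolution both split components are odd-marked, while (ii) at every non-critical resolution the induced marking still renders the smaller Montesinos link TQA by induction. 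Since the relevant dots live only in the twist regions being resolved, this reduces to checking finitely many local tangle-replacement pictures, in the spirit of Figure \ref{fig:resolutiondependence}, where a trivially marked strand carries a canceling pair of dots that separate onto the two components of an unlink upon smoothing.

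Finally, the parenthetical knot statement would be immediate: a knot supports only the trivial two-fold marking data, since an odd total number of dots is not permitted on a single component. Hence the $\omega$ produced by the construction is necessarily trivial, although the compatible $\check{\omega}$ on the diagram may carry canceling dots on individual arcs exactly as in Figure \ref{fig:resolutiondependence}, so that $(L,\vec{0})\in\mathcal{Q}_2$ and $L$ is TQA in the sense of Definition \ref{defn:tqa}.
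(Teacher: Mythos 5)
You have correctly isolated the one genuinely new ingredient: when $s=\min_i\{r_i\}$ a degenerate resolution is a two-component unlink, which is rescued by giving it non-trivial marking data so that it splits into two odd-marked links and lies in $\mathcal{Q}_2$ by clause (2) of Definition \ref{defn:tqa}. That is exactly the move the paper makes. However, the inductive mechanism you describe does not match what actually happens when one resolves a Montesinos diagram, and the gap is concrete. The argument inducts on the number of tangles $n$, not on the crossing number: to pass from $M(0;r_1,\dots,r_n,-s)$ to $M(0;r_1,\dots,r_n,r_{n+1},-s)$ one first inserts a single crossing (the integer tangle $1$) and resolves it. One smoothing deletes that tangle, returning $L_\infty=M(0;r_1,\dots,r_n,-s)$, which is TQA by induction; the other smoothing cuts the Montesinos cycle open and yields the \emph{connected sum of rational links} $1\ast\overline{r}_1\#\cdots\#1\ast\overline{r}_n\#1\ast s$, which is alternating and hence TQA for any marking. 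This connected-sum resolution is entirely absent from your dichotomy (``shortens a continued fraction'' versus ``deletes a tangle or produces a split unlink''), and it is the generic second resolution, not an exceptional one. The split unlink occurs only in the base case $n=1$, $s=r_1$, where $M(0;r_1,-r_1)$ is itself the two-component unlink. One then passes from the single inserted crossing to the full rational tangle $T(r_{n+1})$ by the tangle-extension result (Prop.~\ref{prop:champkof}); your crossing-by-crossing resolution of twist regions is in effect an attempt to re-prove that result inside the main induction, without the case analysis that makes it work.

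The second problem is that the step you yourself call ``the crux''---placing dots so that the unlink is odd-marked on both components at \emph{every} critical resolution while all other resolutions inherit markings covered by the induction hypothesis---is deferred to ``finitely many local pictures,'' and in your organization it is genuinely delicate precisely because the unlink would have to reappear, correctly marked, throughout the induction. In the correct organization this difficulty evaporates: the non-trivial marking is introduced exactly once, on the base-case unlink $M(0;r_1,-r_1)$, and thereafter one only needs a dotted diagram for $L$ whose marking restricts to $\omega_\infty$ under the resolution of the newly inserted crossing, which is immediate since that crossing is disjoint from the arcs carrying the dots. Your treatment of the final sentence of Prop.~\ref{prop:premont1} (a knot admits only trivial two-fold marking data, so the $\omega$ produced is forced to be $\vec{0}$) is correct and matches the paper.
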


\noindent The proof of this proposition is entirely analogous to that of \cite{champanerkarording}.  In the course of the proof, one needs that Theorem 2.1 of \cite{champanerkarkofman} also applies to $\mathcal{Q}_2$.  We list the appropriate generalization:\\

\begin{prop}[cf. \cite{champanerkarkofman} Theorem 2.1]\label{prop:champkof}
Let $(D,\check{\omega})$ be a two-fold marked diagram with a distinguished crossing as in Definition \ref{defn:tqa}, which we call $c$. Let $D'$ be $D$ with $c$ replaced by an alternating rational tangle that extends $c$. Then $(D',\check{\omega}')$ with the naturally induced marking data $\check{\omega}'$ represents a TQA two-fold marked link.\\
\end{prop}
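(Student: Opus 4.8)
The plan is to mirror the proof of Theorem 2.1 in \cite{champanerkarkofman}, carrying the marking data along and relaxing every strict determinant inequality to a non-strict one, using condition (2) of Definition \ref{defn:tqa} to absorb the boundary cases in which a resolution acquires determinant zero. I would argue by strong induction on the number $n$ of crossings of the alternating rational tangle $T$ that replaces $c$ in passing from $(D,\check{\omega})$ to $(D',\check{\omega}')$. Write $D'(T)$ for the diagram obtained by inserting $T$ into the slot of $c$, with the naturally induced marking $\check{\omega}'$ (the dots of $\check{\omega}$ are pushed onto the boundary arcs of the tangle slot, and no dots are placed in the interior of $T$). The base case $n=1$ is the tangle $T=c$ itself, for which $D'(T)=D$ is TQA by hypothesis; here the assumption that $c$ is a distinguished crossing ``as in Definition \ref{defn:tqa}'' is precisely what supplies the two TQA resolutions $(D_0,\check{\omega}_0)$ and $(D_1,\check{\omega}_1)$ together with the additivity $\det(L)=\det(L_0)+\det(L_1)$.

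For the inductive step with $n\ge 2$, I would single out the outermost crossing $c'$ of the outermost twist region of $T$ and apply condition (3) of Definition \ref{defn:tqa} at $c'$. The two smoothings of $c'$ replace $T$ by two alternating rational tangles, each still extending $c$ (that is, each still containing $c$ as its core crossing) and each with strictly fewer crossings than $T$: one smoothing deletes a crossing from the outermost twist region, while the other caps it off and passes to the parent convergent in the continued-fraction (Stern--Brocot) expansion. By the induction hypothesis, both resulting two-fold marked links lie in $\mathcal{Q}_2$. The only combinatorial bookkeeping here is the standard rational-tangle calculus verifying that both smoothings are again alternating rational tangles extending $c$; this is imported unchanged from \cite{champanerkarkofman} and is unaffected by the presence of the marking.

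The remaining and crucial point is the determinant additivity at $c'$, namely $\det(D'(T))=\det(D'(T_a))+\det(D'(T_b))$ for the two smoothings $T_a,T_b$. I would establish it from the linearity of branched-cover homology in the tangle slope: since $\det(L)=|H_1(\Sigma(L);\Z)|$ and $\Sigma(D'(T))$ is obtained from a fixed manifold $M$ with torus boundary (the double cover of the complement of the tangle slot) by Dehn filling along the slope determined by the fraction of $T$, one has $\det(D'(T))=|a\,p_T+b\,q_T|$ for fixed integers $a,b$ depending only on the outside of the slot, where $T$ has fraction $p_T/q_T$. Resolving $c'$ replaces $T$ by its two Farey-neighbor tangles of fractions $p_a/q_a$ and $p_b/q_b$ with $p_T=p_a+p_b$ and $q_T=q_a+q_b$, so additivity holds as soon as $a p_a+b q_a$ and $a p_b+b q_b$ share a sign. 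The alternating hypothesis on $T$ (all continued-fraction entries of one sign) keeps every tangle slope inside a single Farey interval on which $ap+bq$ does not change sign, which is exactly where no cancellation occurs, and the sign is pinned down at the base case by the given additivity at $c$. I expect this no-cancellation step to be the main obstacle, as it is where the word ``alternating'' does all the work and where the passage from QA to TQA enters: when $ap+bq$ vanishes for one neighbor, that resolution has determinant zero, and for the marking produced in Proposition \ref{prop:premont1} it splits into two odd-marked links, so it lies in $\mathcal{Q}_2$ by condition (2) and, as noted after Definition \ref{defn:tqa}, the additivity need not be checked there at all. This is precisely the mechanism allowing the non-strict inequalities of Proposition \ref{prop:monte} where \cite{champanerkarkofman} requires strict ones. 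Assembling these pieces, condition (3), or condition (2) in the boundary case, applies at $c'$ and completes the induction.
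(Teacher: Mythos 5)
The paper's entire proof of this proposition is the assertion that the Champanerkar--Kofman argument carries over unchanged once the marking data is carried along, and your reconstruction --- induction on the crossing number of the tangle, resolving the outermost crossing so that both smoothings are smaller alternating rational tangles extending $c$ (or the two base resolutions of $c$), and deducing determinant additivity from the linearity of $\det$ in the tangle fraction with the sign pinned down by the given additivity at $c$ --- is exactly that argument, so you are taking the paper's approach (your Dehn-filling derivation of the linearity is a harmless variant of their determinant computation). The one misstep is the appeal to ``the marking produced in Proposition \ref{prop:premont1}'' and to condition (2) of Definition \ref{defn:tqa} when a resolution acquires determinant zero: the proposition assumes nothing about where $\check{\omega}$ came from, so that escape hatch is not available in general, but it is also not needed, since such a resolution is either one of the original smoothings $(D_0,\check{\omega}_0)$, $(D_1,\check{\omega}_1)$ (in $\mathcal{Q}_2$ by hypothesis) or a smaller tangle replacement (in $\mathcal{Q}_2$ by your induction hypothesis), and additivity at that crossing follows from the same linearity formula with one term equal to zero, the coefficients being the non-negative determinants of the two basic resolutions.
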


\noindent Here we say that a rational tangle $[a_r,\dots,a_1]$ ``extends a crossing" if $\varepsilon(c)a_i\geq 1$, where $\varepsilon(c)$ denotes the sign of the crossing $c$. The proof of this proposition is unchanged from \cite{champanerkarkofman}.

Given Proposition \ref{prop:champkof} we prove Proposition \ref{prop:premont1}.  First, consider the base case $M(0;r_1,-s)$.  The case $s>r_1$ is dealt with by Champanerkar and Ording, and is QA. If $s=r_1$, then this is an unlink, and so we put on the nontrivial marking data, in which case the proposition is clear.

Now suppose that $(L_\infty,\omega_\infty)$, where $L_\infty=M(0;r_1,\dots,r_n,-s)$, is a TQA two-fold marked link which satisfies the condition of Prop. 6. We will show that $(L,\omega)$, where $L=M(0;r_1,\dots,r_n,1,-s)$, for some marking data $\omega$, is also TQA.  Indeed, upon using the standard Montesinos diagrams and resolving the $1$-crossing of $L$ (but not yet specifying the marking data), we obtain $L_\infty$ and 
\[
    L_0=1 \ast \overline{r}_1 \# \dots 1\ast \overline{r}_n \# 1 \ast s,
\]
following the notation of \cite{champanerkarording}.  The latter link $L_0$ is QA (so is TQA for any choice of marking data).  The condition $s\geq \min \{r_i\}$ guarantees additivity of determinants (cf. \cite[Prop. 5.1]{champanerkarording}):
\[
\mathrm{det}(L)=\mathrm{det}(L_0) +\mathrm{det}(L_\infty).
\]
By the induction hypothesis, we need only check that there exists a choice of two-fold marking $\omega$ that induces $\omega_\infty$ upon resolving the crossing.  This may be done by giving a dotted diagram whose marking data is $\omega_\infty$, and then leaving the appropriate dots on the diagram for $L$. Finally, by Prop. \ref{prop:champkof}, $(M(0;r_1,\dots,r_n,r_{n+1},-s),\omega')$ is TQA for any positive rational $r_{n+1}$, where $\omega'$ is induced from $L$.  Thus, we have shown that if all links of the form $M(0;r_1,\dots,r_n,-s)$ with $s\geq \min\{ r_i\}$ have some choice of marking data for which they are TQA, then all links of the form $M(0;r_1,\dots,r_n,r_{n+1},-s)$ have some choice of marking data for which they are TQA. Noting that knots have only trivial marking data, the proof of Prop. \ref{prop:premont1} is complete.

To prove Proposition \ref{prop:monte}, we note that the Montesinos link $L=M(e;t_1,\dots,t_p)$ is equivalent to the link $M(\varepsilon;\hat{t}_1,\dots,\hat{t}_p)$.  We may assume $\varepsilon=-1$, since $\varepsilon>-1$ links are in fact already QA by \cite{champanerkarording}, and $\varepsilon=1-p$ is the reflected case.  By Lemma 3.3 of \cite{champanerkarording}, we have
\[
M(\varepsilon;\hat{t}_1,\dots,\hat{t}_p)=M(\varepsilon+1;\hat{t}_1,\dots,\hat{t}_{i-1},\hat{t}^f_i,\hat{t}_{i+1},\dots, \hat{t}^f_p).
\]
If $L$ is a knot, the latter side is TQA by Prop. \ref{prop:premont1}.  This completes the proof.
\end{proof}
\vspace{.35cm}

\noindent We remark that when there are multiple components of the Montesinos link, the two-fold marking data in Proposition \ref{prop:premont1} cannot in general be trivial. For example, $M(0;2,2,-2)$ is the link L6n1 of Figure \ref{fig:L6n1isinQ2}. We saw that this link is TQA only when marked non-trivially.

\vspace{.60cm}

\section{Computer calculations} \label{sec:computer}

Using code written by the authors in {\texttt{Python}}, we computed the twisted Khovanov homology $\kh(L,\omega)$ for prime links $L$ with more than one component, up to 10 crossings, and all two-fold markings $\omega$. We list in Table \ref{table} all such $(L,\omega)$ for which $\kh(L,\omega)$ is not of rank $\det(L)$, i.e. {\emph{non-thin}}. In each case we verified Conjecture \ref{conj:1} by finding a dotted diagram $(D,\check{\omega})$ representing $(L,\omega)$ for which the spectral sequence from $\text{Hd}(D,\check{\omega})$ to $\kh(L,\omega)$ collapses. We write $\kh(L,\omega)_{\delta^\#}$ for the $\Z_4$-graded group $\kh(L,\omega)=\text{Hd}(D,\check{\omega})$ as in (\ref{eq:z4group}). The 4-tuples $(b_0,b_1,b_2,b_3)$ appearing in the table are the four betti numbers of $\kh(L,\omega)_{\delta_\#}$ indexed by $\Z_4 = \{0,1,2,3\}$:
\[
    b_i = \text{dim}_\F\kh(L,\omega)_{(i)}, \quad i\in \Z_4.
\]
In each case, by Theorem \ref{thm:ss}, the betti number $b_i$ gives an upper bound for the rank of the framed instanton homology with $\F=\Z_2$-coefficients:
\[
 b_i \geq \text{dim}_\F I^\#(\Sigma(L^\dagger),\omega)_{(i)}.
\]
For the convenience of the reader we have included for each link its number of components, its determinant, as well as its signature. We use the orientation of $L$ which is the first orientation listed in {\emph{Linkinfo}} \cite{linkinfo}. Our convention is that the signature of the right-handed trefoil is $+2$.

\begin{figure}[t]
\centering
\includegraphics[scale=.13]{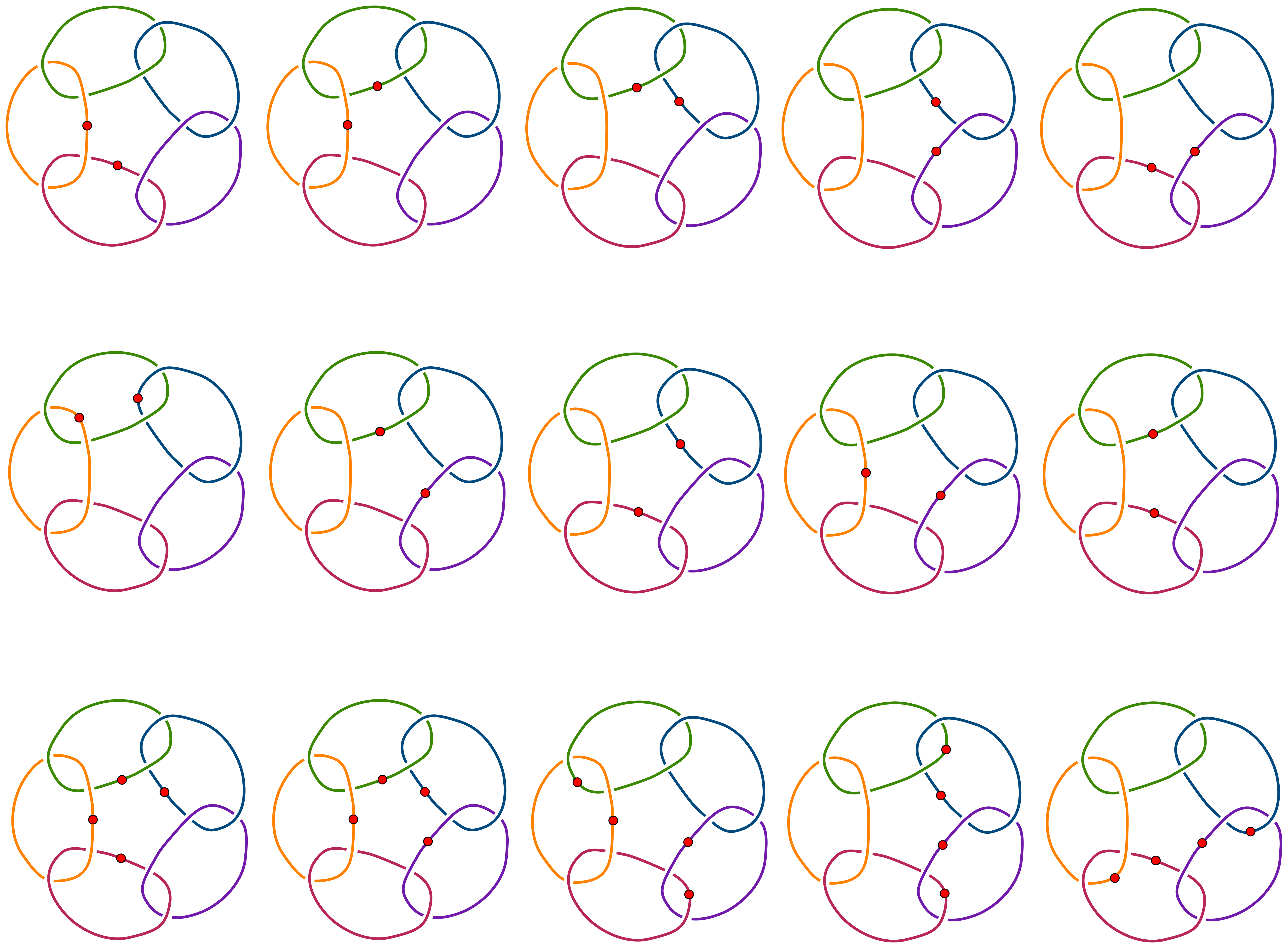}
\caption{Two-fold marked diagrams, one for each non-trivial two-fold marking of L10n113, for which $\text{Hd}(D,\check{\omega})$ is isomorphic to $\kh(L,\omega)$. In each case the $\delta^\#$ betti numbers are $(10,2,10,2)$. For each of the fifteen markings, we first tried the diagram $(D,\check{\omega})$ with dots only on the arcs bounding the interior pentagon region. This is the diagram depicted for the marking if $\text{Hd}(D,\check{\omega})=\kh(L,\omega)$. If that diagram is not depicted, then these groups were not equal. This happened for the first diagram in the second row, and the last three diagrams in the third row.} \label{fig:L10n113}
\end{figure}

In Figure \ref{fig:L10n113} we show dotted diagrams $(D,\check{\omega})$ representing each of the 15 non-trivial two-fold markings for the link L10n113, the last link listed in Table \ref{table}. Each diagram $(D,\check{\omega})$ has the property that $\text{Hd}(D,\check{\omega})$ is isomorphic to $\kh(L,\omega)$. The table indicates that the betti numbers are the same for the 15 cases, and are given by $(10,2,10,2)$.

In the penultimate column we list $\kh(L,\vec{0})_{\delta^\#}$ for the trivial marking data $\omega = \vec{0}$, which is the same as $\overline{\kh}(L;\F)$, the reduced Khovanov homology for $L$ with $\F=\Z_2$-coefficients. In the final column of the table are listed non-thin $\kh(L,\omega)_{\delta^\#}$ for $\omega$ non-trivial. When a 4-tuple of betti numbers $(b_0,b_1,b_2,b_3)$ occurs for $N$ different non-trivial two-fold markings $\omega$, we use the short-hand
\[
    (b_0,b_1,b_2,b_3)\times N.
\]
In the absence of an $N$, we mean that the 4-tuple occurs only once (i.e., $N=1$). Here is an example. The link $L=$ L10n108 has 4 components, and so it has $2^{4-1}=8$ two-fold markings, or equivalently, 8 isomorphism classes of $SO(3)$-bundles over $\Sigma(L)$. For the trivial marking $\omega = \vec{0}$, the $\delta^\# $ (mod 4) graded reduced $\Z_2$ Khovanov homology has betti numbers $(12,0,8,4)$. The table indicates that there are three non-trivial markings $\omega$ for which $\kh(L,\omega)_{\delta^\#}$ is not given by Theorem \ref{thm:tqa}, and these

\begin {table}
\begin{center}
\caption {Non-thin $\text{Kh}(L,\omega)$ for multi-component prime links with up to 10 crossings} \label{table} 
\begin{tabular}{ lccclll } 
\toprule
Link $L$  & $\;\;|L|\;\;$ & $\det(L)$ & $\sigma(L)$ & $\text{Kh}(L,\vec{0})_{\delta^\#}\qquad$ &  Non-thin $\text{Kh}(L,\omega)_{\delta^\#}$ for $\omega\neq \vec{0}$ \\
\midrule
L6n1 & 3 & 4 &  0  &  $(4,0,1,1)$  & none \\ 
L7n1 & 2 & 4 & -5 &   $(3,0,2,1)$ & none \\ 
L8n2 & 2 & 8 & 1 & $(5,1,4,0)$ & none \\ 
L8n3 & 3 & 4 &  -6 &  $(4,0,2,2)$ & $(3,1,3,1)$ \\
L8n6 & 3 & 0 &  -3 &  $(2,1,3,4)$ & $(4,4,4,4)$,  $(2,2,2,2)\times 2$\\
L8n7 & 4 & 16 & 1  &  $(12,0,5,1)$ & none \\
L8n8 & 4 & 0 & 0 & $(1,1,5,5)$  & $(2,2,2,2)\times 7$  \\
L9n1 & 2 & 12 & -5 &  $(7,0,6,1)$ & none\\
L9n3 & 2 & 8 &  -1 &  $(6,2,5,1)$ & none\\
L9n4 & 2 & 4 &  -7 &  $(4,1,3,2)$ & none\\
L9n9 & 2 & 4 &  -3 &  $(5,3,4,2)$ & $(4,2,4,2)$\\
L9n12 & 2 & 4 & 1  &  $(5,2,3,2)$ & $(4,2,4,2)$\\
L9n15 & 2 & 2 & -7 &  $(2,0,2,2)$ & $(2,1,2,1)$\\
L9n18 & 2 & 0 & -6 & $(1,2,3,2)$ & $(2,2,2,2)$\\
L9n19 & 2 & 0 & -4 & $(2,2,3,3)$ & $(2,2,2,2)$\\
L9n21 & 3 & 4 & 0 & $(6,3,3,2)$ & $(4,2,4,2)\times 3$\\
L9n22 & 3 & 20 & 0 & $(12,1,9,0)$ & none\\
L9n23 & 3 & 12 & -4 & $(8,0,6,2)$ & $(7,1,7,1)$\\
L9n25 & 3 & 16 & 0 & $(10,1,7,0)$ & none\\
L9n26 & 3 & 16 & 0 & $(10,1,7,0)$ & none\\
L9n27 & 3 & 0 & -1 & $(3,4,6,5)$ & $(4,4,4,4)\times 3$\\
L10n1 & 2 & 20 & -3 & $(12,1,11,2)$ & none\\
L10n3 & 2& 24 &1  & $(13,1,12,0)$ & none \\
L10n5 & 2 & 16& 3& $(10,2,9,1)$ & none \\
L10n8 & 2 & 16 & 1 & $(9,1,8,0)$ & none \\
L10n9 & 2 & 8 & -1 & $(7,2,6,3)$ & none \\
L10n10& 2 & 12 & -5 & $(9,2,8,3)$ & none\\
L10n13 & 2 &20 & -5& $(11,0,10,1)$ & none\\
L10n14 & 2 & 8 & -1 & $(9,4,7,4)$ & $(8,4,8,4)$\\
L10n18 & 2 & 8 &-1 & $(9,4,8,5)$ & $(8,4,8,4)$\\ 
L10n23 & 2& 16& 5& $(9,1,8,0)$ & none \\
L10n24 & 2 & 16 & -1 & $(9,0,8,1)$ & none \\
L10n25 & 2 & 4 & 1 & $(7,5,6,4)$ & $(6,4,6,4)$\\
L10n28 & 2 & 4 & -3 & $(7,4,5,4)$ &  $(6,4,6,4)$\\
L10n32 & 2 & 0 & 0 & $(4,4,5,5)$ & $(4,4,4,4)$ \\
L10n36 & 2 & 0 & 0 & $(4,4,5,5)$ & $(4,4,4,4)$\\
L10n37 & 2 &12 &1 & $(9,2,7,2)$ & $(8,2,8,2)$\\
\bottomrule
\end{tabular}
\end{center}
\caption*{
have betti numbers $(10,2,10,2)$. Thus the remaining $4$ of the non-trivial markings have $\kh(L,\omega)_{\delta^\#}$ given by Theorem \ref{thm:tqa}, with betti numbers $(8,0,8,0)$.
In the introduction, we speculated that for any two-fold marked link $(L,\omega)$, there exists a diagram $(D,\check{\omega})$ for which $\text{Hd}(D,\check{\omega})$ is isomorphic to $\kh(L,\omega)$. For a certain class of two-fold marking data, our computations suggest a refinement. Let $(D,\check{\omega})$ be a two-fold marked diagram with two dots, where the dots are located near a crossing on adjacent arcs. In other words, $(D,\check{\omega})$ is a dotted surgery diagram with $|\check{\omega}|_1=2$. In this situation, we conjecture that $\text{Hd}(D,\check{\omega})=\kh(L,\omega)$. On the other hand, there are simple dotted diagrams $(D,\check{\omega})$ with two dots for which the dotted diagram homology does not compute the twisted Khovanov homology. This was apparent in Figure \ref{fig:noninvariance}. See also the caption of Figure \ref{fig:L10n113}.
}
\end{table}

\begin{table}
 \label{tab:title} 
\begin{center}
\caption*{Table 1 (Cont.)}
\begin{tabular}{ lccclll  } 
\toprule
Link $L$  & $\;\;|L|\;\;$ & $\det(L)$ & $\sigma(L)$ & $\text{Kh}(L,\vec{0})_{\delta^\#}\qquad$ &  Non-thin $\text{Kh}(L,\omega)_{\delta^\#}$ for $\omega\neq \vec{0}$ \\
 \midrule
L10n39 & 2 & 24 & 5 & $(13,1,12,0)$ & none  \\
L10n42 & 2 & 10 & 5 & $(6,2,6,0)$ &  $(6,1,6,1)$\\
L10n45 & 2 & 6  & -3 & $(4,0,4,2)$ & $(4,1,4,1)$ \\
L10n54 & 2 & 12 & 5 & $(7,1,6,0)$ & none\\
L10n56 & 2 & 0 & -2 & $(3,4,5,4)$ & $(4,4,4,4)$\\
L10n57 & 2 & 0 & 0 & $(4,4,5,5)$ & $(4,4,4,4)$\\
L10n59 & 2 & 0 & 0 & $(4,4,5,5)$ & $(4,4,4,4)$\\
L10n60 & 2 & 12 & 1 & $(9,2,7,2)$ & $(8,2,8,2)$\\
L10n62 & 2 & 24 & 5 & $(13,1,12,0)$ & none\\
L10n66 & 3 & 4 & 0 & $(8,5,5,4)$ & $(6,4,6,4)\times 3$\\
L10n67 & 3 & 36 & 0 & $(20,1,17,0)$ & none\\
L10n68 & 3 & 20 & -6  & $(12,0,10,2)$ & $(11,1,11,1)$ \\ 
L10n70 & 3 & 16 & -2 & $(11,3,9,1)$ & $(10,2,10,2)$ \\ 
L10n72 & 3 & 12 & 2 & $(10,4,8,2)$ & $(9,3,9,3), (8,2,8,2)\times 2$ \\
L10n74 & 3 & 12 & -6 & $(9,1,7,3)$ & $(8,2,8,2)$ \\
L10n77 & 3 & 4 & -8 & $(5,1,3,3)$ & $(4,2,4,2)$\\
L10n82 & 3 & 12 & -2 & $(10,2,6,2)$ & $(8,2,8,2)\times 3$ \\
L10n83 & 3 & 32 & -4 & $(18,0,15,1)$ &  none\\
L10n84 & 3 & 8 & -4 & $(8,4,6,2)$ & $(7,3,7,3), (6,2,6,2)\times 2$ \\
L10n87 & 3 & 8& 2 & $(8,2,4,2)$ & $(6,2,6,2)\times 3$ \\
L10n88 & 3 & 16 & 0 & $(12,3,9,2)$ & $(10,2,10,2)\times 3$\\
L10n91 & 3 & 0 & -3 & $(6,5,7,8)$ & $(6,6,6,6)\times 3$ \\
L10n93 & 3 & 0 & -7 & $(0,2,4,2)$ & $(2,2,2,2)\times 3$ \\
L10n94 & 3 & 0 & 3 & $(2,2,4,4)$ & $(3,3,3,3),(2,2,2,2)\times 2$ \\
L10n97 & 4 & 8 & -1 & $(10,4,4,2)$ & $(6,2,6,2)\times 7$ \\
L10n98 & 4 & 24 & 1 & $(16,2,10,0)$ & $(13,1,13,1),(12,0,12,0)\times 6$\\
L10n101 & 4 & 16 & 3 & $(14,3,7,2)$ & $(10,2,10,2)\times 7$\\
L10n102 & 4  & 16 & -3 & $(13,5,9,1)$ & $(10,2,10,2)\times 7$\\
L10n103 & 4 & 32 & -3 & $(20,0,13,1)$ & none \\
L10n104 & 4 & 0 & -2 & $(2,0,4,6)$ & $(3,3,3,3),(2,2,2,2)\times 6$\\
L10n106 & 4 &  32 & -3 & $(20,0,14,2)$ & $(20,1,14,1)$\\
L10n107 & 4 & 0 & 0 & $(7,7,11,11)$ & $(8,8,8,8)\times 7$\\
L10n108 & 4 & 16 & -5 & $(12,0,8,4)$ & $(10,2,10,2)\times 3$\\
L10n111 & 4 & 0 & 0 & $(6,4,8,10)$ & $(7,7,7,7),(6,6,6,6)\times 6$ \\
L10n112 & 5 & 48 & 2 & $(32,0,17,1)$ & none \\
L10n113 & 5 & 16 & 0 & $(17,1,6,6)$ & $(10,2,10,2)\times 15$\\
 \bottomrule
\end{tabular}
\end{center}
\end{table}

\begin{figure}[t]
\centering
\includegraphics[scale=.30]{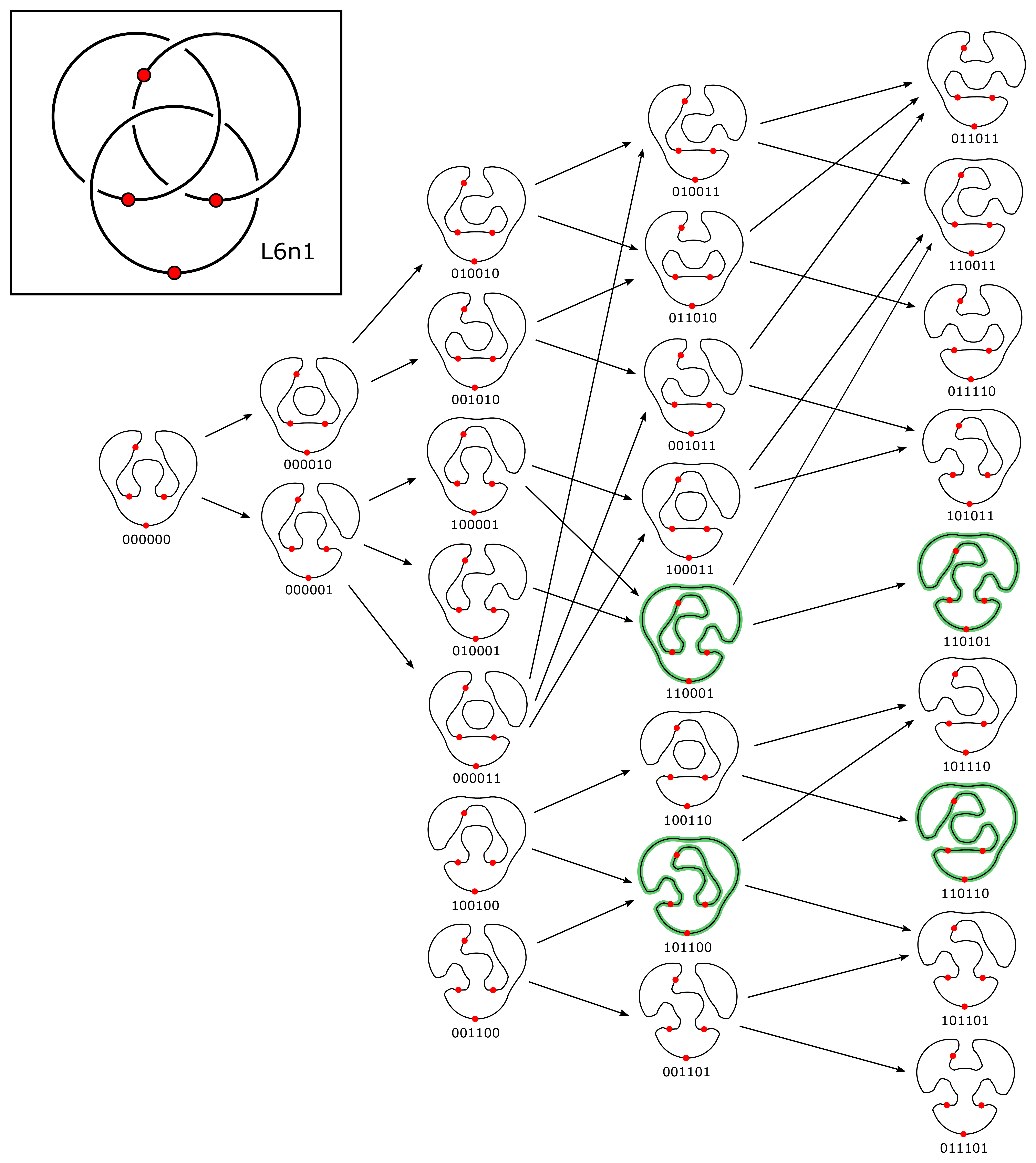}
\caption{The dotted diagram complex that computes $\text{Hd}(D,\check{\omega})$ of the link L6n1 with two-fold marked diagram equivalent to the dotted surgery diagram in Figure \ref{fig:surgerydiagram}. The dotted diagram homology in this case is isomorphic to the twisted Khovanov homology, which is rank 4, supported in $\delta$-grading $0$. Four generators for the homology can be found in the bold (green) resolution diagrams.} \label{fig:L6n1comp}
\end{figure}

\newpage
\bibliography{main.bbl}
\bibliographystyle{alpha}

\vspace{.85cm}

\footnotesize

  \textsc{Department of Mathematics, Brandeis University,
    Waltham, MA}\par\nopagebreak
  \textit{E-mail address:}\;\texttt{cscaduto@brandeis.edu}

\vspace{.35cm}

    \textsc{Department of Mathematics, University of California,
    Los Angeles, CA}\par\nopagebreak
  \textit{E-mail address:}\;\texttt{mstoffregen@math.ucla.edu}

\end{document}